\def\R{\mathbb{R}}
\def\Z{\mathbb{Z}}
\newcommand{\scriptC}{\mathcal{C}}
\newcommand{\scriptD}{\mathcal{D}}
\newcommand{\scriptE}{\mathcal{E}}
\newcommand{\scriptR}{\mathcal{R}}
\newcommand{\fP}{\mathcal{P}}
\newcommand{\ang}{\operatorname{ang}}
\newcommand{\sD}{\mathscr{D}}
\newcommand{\fE}{\mathcal{E}}
\newcommand{\fH}{\mathcal{H}}
\newcommand{\fK}{\mathcal{K}}
\newcommand{\fI}{\mathcal{I}}
\newcommand{\bK}{\boldsymbol{K}}
\newcommand{\dist}{\operatorname{dist}}
\newcommand{\qtq}[1]{\quad\text{#1}\quad}
\def\supp{{\rm supp}}
\newcommandx{\emanuel}[2][1=]{\todo[linecolor=green,backgroundcolor=green!25,bordercolor=black,#1]{#2}}
\newcommandx{\diogo}[2][1=]{\todo[linecolor=orange,backgroundcolor=orange!25,bordercolor=orange,#1]{#2}}
\newcommandx{\mateus}[2][1=]{\todo[linecolor=blue,backgroundcolor=blue!25,bordercolor=blue,#1]{#2}}
\newcommandx{\danger}[2][1=]{\todo[linecolor=red,backgroundcolor=red!25,bordercolor=blue,#1]{#2}}
\renewcommand{\d}{\text{\rm d}}
\newcommand{\one}{\mathbbm 1}
\newcommand{\eps}{\varepsilon}
\newcommand{\jp}[1]{\langle{#1}\rangle}
\newcommand{\llangle}{{\langle\!\langle}}
\newcommand{\rrangle}{{\rangle\!\rangle}}
\newcommand{\hjp}[1]{\llangle{#1}\rrangle}
\numberwithin{equation}{section}
\newtheorem{theorem}{Theorem}[section]
\newtheorem{proposition}[theorem]{Proposition}
\newtheorem{lemma}[theorem]{Lemma}
\theoremstyle{definition}
\newtheorem{definition}[theorem]{Definition}
\theoremstyle{remark}
\newtheorem{remark}[theorem]{Remark}
\numberwithin{equation}{section}
\date{\today}                                           
\title{Restriction inequalities for the hyperbolic hyperboloid}
\author{Benjamin Baker Bruce}
\author{Diogo Oliveira e Silva}
\author{Betsy Stovall}
\address{University of Wisconsin--Madison\\ 
480 Lincoln Drive\\ 
Madison, WI 53706\\
USA.}
\email{bbruce@math.wisc.edu}
\email{stovall@math.wisc.edu}
\address{
        School of Mathematics\\
        University of Birmingham\\
        B15 2TT, England, UK.}
\email{d.oliveiraesilva@bham.ac.uk}
\begin{document}
\subjclass[2010]{42B10}
\keywords{Fourier restriction, decoupling, hyperboloid.}

\begin{abstract}
In this article we establish new inequalities, both conditional and unconditional, for the restriction problem associated to the hyperbolic, or one-sheeted, hyperboloid in three dimensions, endowed with a Lorentz-invariant measure.  These inequalities are unconditional (and optimal) in the bilinear range $q > \tfrac{10}3$.
\end{abstract}

\maketitle
\setcounter{tocdepth}{1}
\tableofcontents

\section{Introduction}

This article concerns the boundedness of the Fourier restriction operator associated to the hyperbolic, or one-sheeted, hyperboloid in $\R^{1+2}$,
$$
\Gamma:=\{(\tau,\xi) \in \R^{1+2} : 1+\tau^2 = |\xi|^2\}.  
$$
This surface is invariant under the Lorentz transformations 
\begin{equation} \label{E:Lorentz}
L_\nu:(\tau,\xi) \mapsto (\jp{\nu}\tau-\nu\cdot\xi,\xi^\perp+\jp{\nu}\xi^{\parallel}-\nu\tau),  \qquad \nu \in \R^2,
\end{equation}
where $\jp{\nu}:=\sqrt{1+|\nu|^2}$ and $\xi^\perp,\xi^\parallel$ are the perpendicular and parallel components with respect to $\nu$.  We endow the surface with the unique (up to scalar multiples) Lorentz-invariant measure, which coincides with what is known as the affine surface measure, 
$$
\int_\Gamma f \, \d\sigma = \int_{\{|\xi|>1\}} (f(-\hjp{\xi},\xi) + f(\hjp{\xi},\xi)) \, \tfrac{\d\xi}{\hjp{\xi}}, \qtq{where} \hjp{\xi} := \sqrt{|\xi|^2-1}, \: |\xi| \geq 1.
$$

Various geometric features of this surface make it potentially interesting from the perspective of Fourier restriction/extension.  Though the Gaussian curvature is nonvanishing, the principal curvatures have different signs, which presents challenges at all scales because the restriction theory for hyperbolic surfaces is much less well-developed than that for elliptic surfaces.  One of the main contributions of the present article is an adaptation of the techniques of \cite{Lee neg, BS neg, Vargas neg} to establish unconditional, global restriction inequalities in the bilinear range.  In particular, we establish the first extension inequalities on the parabolic scaling line $q=2p'$ beyond the Stein--Tomas range  (i.e.\ with $p>2$)  for any negatively curved surface that is not the hyperbolic paraboloid.  The above-mentioned techniques are directly applicable in the low frequency region $\{|\xi| \lesssim 1\}$, but at high frequencies, the surface is asymptotic to the cone, presenting some additional complications.  In this region, we use conic decoupling and interpolation with bilinear inequalities to prove a conditional result that boosts local restriction inequalities on the low frequency region to global ones in a range that is non-optimal but, nevertheless, offers the possibility of improvement over that obtainable directly from bilinear restriction.  Our explorations of the conic region also suggest possible future applications of some (surprisingly, still open) questions about the restriction operator associated to the cone in $1+2$ dimensions.  

We turn now to statements of our main results, given in terms of the Fourier extension operator $\scriptE f:=\widehat{f\d\sigma}$, and its local version $\scriptE_0 f := \scriptE(\one_{\{|\xi| \lesssim 1\}} f)$.  We say that $\scriptR^*(p \to q)$ holds if there exists a universal constant $C$ such that $\|\scriptE f\|_{L^q(\R^3)} \leq C\|f\|_{L^p(\Gamma;\d\sigma)}$, for all $f \in C^\infty_{\rm{cpct}}(\R^3)$; we say that $\scriptR^*_0(p \to q)$ holds when the analogous statement holds with $\scriptE_0$ in place of $\scriptE$.  

\begin{theorem}\label{T:pos}
For $(p,q) \neq (4,4)$ obeying $2p' \leq q \leq 3p'$, $q \geq p$, and $q > \tfrac{10}3$, $\scriptR^*(p \to q)$ holds.  Moreover, for $3<q_0 < \tfrac{10}3$, $\scriptR^*_0((\frac{q_0}{2})' \to q_0)$ implies $\scriptR^*(p \to q)$ for all exponent pairs obeying $q_0 < q \leq \tfrac{10}{3}$,  $(\tfrac q2)' \leq p \leq q$, and   
$$
\frac{1}{p} > \frac{2}{5}\cdot\frac{1/q - 3/10}{1/q_0 - 3/10} + \frac{1}{10}.
$$
\end{theorem}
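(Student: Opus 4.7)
The plan is to treat the two assertions in sequence, exploiting a dyadic decomposition of $\Gamma$ into a compact low-frequency core $\Gamma_0 := \Gamma \cap \{|\xi| \lesssim 1\}$, where the surface has nonvanishing principal curvatures of opposite signs, and high-frequency annular pieces $\Gamma_k := \Gamma \cap \{|\xi| \sim 2^k\}$, $k \geq 1$, each of which is a small perturbation of a piece of the light cone. The threshold $10/3$ arises precisely as the sharp bilinear exponent for the cone in $\R^{1+2}$.

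\textbf{The unconditional range $q > \tfrac{10}{3}$.} For $\Gamma_0$, I would adapt the negative-curvature bilinear-to-linear machinery referenced in the introduction: establish a global bilinear restriction estimate at some exponent past $10/3$ via Tao--Wolff wave-packet induction refined for surfaces whose principal curvatures have opposite signs, then convert bilinear to linear via the Tao--Vargas Whitney decomposition. On each $\Gamma_k$, an anisotropic parabolic-type rescaling aligned with the asymptotic cone reduces the bilinear bound to the sharp bilinear cone estimate at unit scale, the hyperboloidal correction being lower order at high frequency. Lorentz invariance through the transformations \eqref{E:Lorentz} together with a Whitney decomposition of transversal pairs $(\Gamma_k,\Gamma_\ell)$ then assemble the pieces into the global linear estimate $\scriptR^*(p \to q)$ in the stated range.

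\textbf{The conditional range.} Fix $3 < q_0 < \tfrac{10}{3}$ and assume $\scriptR^*_0((q_0/2)' \to q_0)$. Decompose $\scriptE f = \scriptE_0 f + \sum_{k \geq 1} \scriptE_k f$, where $\scriptE_k f := \scriptE(\one_{\{|\xi|\sim 2^k\}} f)$. The parabolic rescaling along the asymptotic cone transports the hypothesis into a scale-$2^k$ estimate
$$
\|\scriptE_k f\|_{L^{q_0}(\R^3)} \lesssim 2^{k\alpha_0}\, \|f\|_{L^{(q_0/2)'}(\Gamma_k;\,\d\sigma)},
$$
with an explicit $\alpha_0$ dictated by the Jacobian of the rescaling and the definition of $\d\sigma$. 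To sum in $k$, I would apply $\ell^2$-decoupling for the cone of Bourgain--Demeter to bound $\|\scriptE f\|_{L^q}$ by the $\ell^2$-sum of the $\|\scriptE_k f\|_{L^q}$'s (up to an $\eps$-loss) in a favorable range of $q$. Interpolating this decoupling bound against the unconditional bilinear endpoint just above $q = \tfrac{10}{3}$ obtained in the first part yields $\scriptR^*(p \to q)$ precisely in the region cut out by the stated threshold on $1/p$, which by construction is the interpolation line in the $(1/p, 1/q)$-plane joining the conditional endpoint $(1/(q_0/2)', 1/q_0)$ to the bilinear endpoint at $q = 10/3$.

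\textbf{The main obstacle.} The delicate step is the conic region. Decoupling produces an $\ell^2$-sum which, for input exponent $p < 2$, is badly mismatched with the target $L^p(\Gamma;\d\sigma)$-norm; the rescaled local estimate contributes a growth factor $2^{k\alpha_0}$; and the bilinear endpoint provides a decay gain. The technical heart is to balance these three contributions---decoupling loss, rescaling growth, and bilinear gain---and in particular to track the Lorentz rescaling constants precisely so that the sum in $k$ converges uniformly in $f$. This is what produces the specific slope $2/5$ in the threshold $\tfrac{1}{p} > \tfrac{2}{5}\cdot\tfrac{1/q-3/10}{1/q_0-3/10} + \tfrac{1}{10}$.
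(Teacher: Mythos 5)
Your high-level decomposition --- a compact low-frequency core handled by bilinear machinery for negatively curved surfaces, and dyadic annuli handled via conic asymptotics, bilinear cone estimates, and Bourgain--Demeter decoupling --- matches the architecture of the paper, and your identification of $q > \tfrac{10}{3}$ as the bilinear cone threshold is correct. However, the conditional part of your argument contains two structural errors that would prevent it from closing.

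First, you propose to sum the annular pieces $\scriptE_k f$ by applying $\ell^2$ conic decoupling across the dyadic scales $k$. This is not what conic decoupling does: the Bourgain--Demeter theorem is a fixed-scale statement decoupling a $\delta$-neighborhood of a \emph{compact} piece of the cone into angular slabs of width $\delta^{1/2}$; it does not interchange radial dyadic scales. Moreover, even granting such a bound, the resulting controlling quantity $\bigl(\sum_k \|f\|_{L^p(\Gamma_k)}^2\bigr)^{1/2}$ is \emph{larger} than $\|f\|_{L^p(\Gamma)}$ in the relevant range $p > 2$ (by $\ell^2 \hookleftarrow \ell^p$), so the step would go the wrong way. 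In the paper, decoupling is used only \emph{within} a fixed annulus, to pass from a sector of angular width $2^{-k}$ to its constituent unit-width ($(N,N)$-)sectors (Proposition \ref{P:sectors via decoup}), and the summation over annuli is carried out by an entirely different mechanism (Section \ref{S:annuli to global}): a four-fold almost-orthogonality argument powered by bilinear annulus-to-annulus estimates with exponential decay $2^{-c_0|N_1-N_2|}$, derived from mixed-norm Strichartz (Lemma \ref{L:annular Strichartz}) or from bilinear sector estimates (Lemma \ref{L:reduce annuli p=q}). Your proposal omits this step.

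Second, your rescaling claim $\|\scriptE_k f\|_{L^{q_0}} \lesssim 2^{k\alpha_0}\|f\|_{L^{(q_0/2)'}(\Gamma_k)}$ with a polynomial loss $2^{k\alpha_0}$ is not the right bookkeeping. The correct and sharper statement (Lemma \ref{L:zero to thin sector}) is \emph{loss-free}: a Lorentz boost maps a unit-angular-width sector $\Gamma_{N,N}^\omega$ into $\Gamma_0$, and the Lorentz invariance of $\d\sigma$ means the hypothesis $\scriptR_0^*((q_0/2)' \to q_0)$ transports to every such sector without any power of $2^N$. The bounds on full annuli that result (Propositions \ref{P:annuli bilin range} and \ref{P:annuli via decoup}) are uniform in $N$, and the slope $2/5$ in the conditional threshold comes from interpolating the decoupling-derived sector bound with Candy's bilinear cone estimate and then running the bilinear-to-linear deduction (Lemma \ref{bilinear-to-linear}), not from canceling a rescaling growth against a decoupling gain. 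Finally, your sketch of the unconditional low-frequency bound elides the key novelty there, namely the ``tiles'' adapted to the doubly ruled structure of the hyperboloid (Sections \ref{S:unconditional bounds in zero region}--\ref{S:bilinear tiles}), which are what allows a genuine adaptation of the Vargas/Stovall and Lee framework to this surface.
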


In particular, the first author proved in \cite{BenHypHyp} (see also Remark \ref{scaling line}) that $\scriptR_0^*((\tfrac{q_0}2)'\rightarrow q_0)$ holds for $q_0 > 3.25$, and so our conditional result implies that $\scriptR^*(p \to q)$ holds for $q \leq \tfrac{10}3$, $(\tfrac q2)' \leq p \leq q$, and 
$$
\frac1p > \frac{52}q - \frac{31}2.
$$
(The upper line segment of this region has endpoints $(\tfrac1p,\tfrac1q) = (
\tfrac{31}{102},\tfrac{31}{102})$ and $(\tfrac{7}{18},\tfrac{11}{36})$.)  Because of the loss in the range of $q$, we expect that our conditionality in Theorem \ref{T:pos} is not optimal.  However, improvements to the range of $L^p \times L^p \to L^q$ bilinear extension inequalities for the cone in $\R^3$ may suggest a means of improving the range in our conditional result.

\begin{figure}
\begin{tikzpicture}[scale=13]

\tikzstyle{opendot} = [shape=circle, draw, inner sep = 0pt, minimum size = 1mm]
\tikzstyle{closeddot} = [shape=circle, fill, inner sep = 0pt, minimum size = 1mm]

\node (origin) at (0,0){};
\node (x axis endpoint) at (1.03,0)[label=right:$\frac{1}{p}$]{}; 
\node (y axis endpoint) at (0, .53)[label=above:$\frac{1}{q}$]{};

\node (bilinear line l) at (0,3/10)[label=left:$\frac{3}{10}$]{}; 
\node (bilinear line r) at (2/5,3/10){};

\node (A) at (1,0)[label=below:1]{};
\node (B) at (1/4,1/4)[opendot, label=left:{$(\frac{1}{4},\frac{1}{4})$}]{};
\node (C) at (7/22,7/22) [opendot, label={[text depth=1.5ex] left:{$(\frac{7}{22},\frac{7}{22})$}}]{};
\node (D) at (1/3,1/3) [opendot, label=above:{$(\frac{1}{3},\frac{1}{3})$}]{};
\node (E) at (5/14,9/28) [opendot, label={[text depth = 2ex]right:{$(\frac{5}{14},\frac{9}{28})$}}]{};

\draw[->] (origin.center) -- (x axis endpoint.center);
\draw[->] (origin.center) -- (y axis endpoint.center);

\draw[red] (A.center) -- (B);
\draw[red] (B) -- (C);
\draw[densely dotted, red] (C) -- (E);
\draw[red] (E) -- (A.center);
\draw[densely dotted] (C) -- (D);
\draw[densely dotted] (D) -- (E);

\draw[densely dotted, blue] (bilinear line l.center) -- (bilinear line r.center);

\end{tikzpicture}

\caption{By Theorem \ref{T:pos}, the full restriction conjecture for the low frequency region would imply global restriction estimates for exponent pairs $(p^{-1},q^{-1})$ within the red quadrilateral. Unconditional estimates hold in the bilinear range $q > \frac{10}{3}$.}
\end{figure}
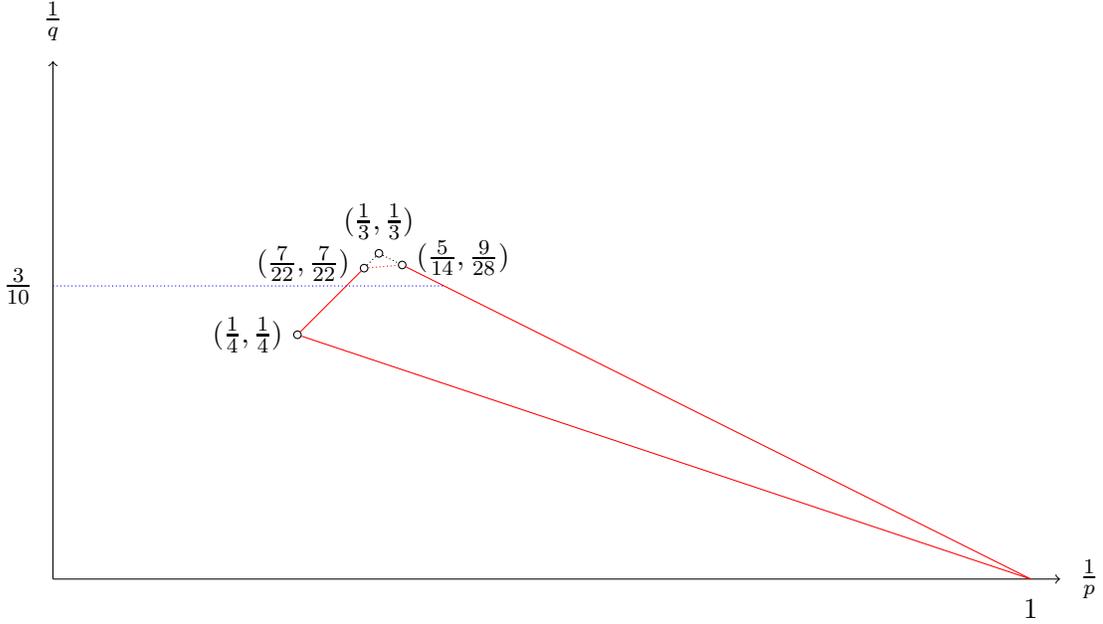

By contrast with Theorem~\ref{T:pos}, we note the following negative result.  

\begin{proposition} \label{P:neg}
For $(p,q) \in\{ (3,3), (4,4)\}$ and for $(p^{-1},q^{-1})$ lying  outside of the triangle
$$
T:=\{(p^{-1},q^{-1}): 2p' \leq q \leq 3p', \: q \geq p\},
$$
$\scriptR^*(p \to q)$ fails.  
\end{proposition}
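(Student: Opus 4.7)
The plan is to exhibit explicit counterexamples covering the forbidden regions. Outside of $T$, three families correspond to the three boundary lines: for $q<2p'$ I would use the standard Knapp cap at the waist $(0,(1,0))$ (to which every point is Lorentz-equivalent), where in the natural local coordinates $\Gamma$ is the saddle $\xi_1=1+\tfrac12(u^2-v^2)+O(|u|^4+|v|^4)$ with $\d\sigma\approx \d u\,\d v$, so $f=\one_{\{|u|,|v|\leq r\}}$ yields $\|f\|_p\sim r^{2/p}$ and $\|\scriptE f\|_q\gtrsim r^{2-4/q}$ (since $|\scriptE f|\sim r^2$ on a dual box of volume $r^{-4}$), blowing up as $r\to 0$; for $q>3p'$ I would transport a unit-scale bump on the cone $\{\tau=|\xi|\}$ to $\Gamma\cap\{|\xi|\sim R\}$ for $R\gg 1$, using the cone's dilation scaling (an asymptotic symmetry of $\Gamma$) to obtain $\|\scriptE f_R\|_q/\|f_R\|_p\sim R^{1-3/q-1/p}$, blowing up as $R\to\infty$; and for $q<p$ I would sum $N$ Lorentz-boosts of a Knapp cap, $f=\sum_k b\circ L_{\nu_k}^{-1}$, with $\{\nu_k\}$ spread so the supports are disjoint on $\Gamma$ and the dual parallelepipeds nearly disjoint in $\R^3$, yielding a ratio $\sim N^{1/q-1/p}$ that diverges as $N\to\infty$.

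At the endpoint vertices $(3,3)$ and $(4,4)$, each of the three arguments above produces only a constant ratio, so finer constructions with logarithmic divergence are required. For $(4,4)$ I would use the axisymmetric annulus $\one_{\{|\xi|\in[R,2R]\}}$ on the upper sheet; by rotational symmetry $\scriptE f$ reduces to a Bessel-type radial integral, and stationary-phase analysis combined with the asymptotic $J_0(s)\sim\sqrt{2/(\pi s)}\cos(s-\pi/4)$ gives $|\scriptE f(t,y)|\sim (R/|y|)^{1/2}$ in an $R^{-1}$-neighborhood of the light cone $\{|t|=|y|\}$ for $|y|\in[R^{-1},R]$, leading to $\|\scriptE f\|_4\sim (R\log R)^{1/4}$ against $\|f\|_4\sim R^{1/4}$, a $(\log R)^{1/4}$-divergence. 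For $(3,3)$, an analogous focusing-type example adapted to the Knapp scaling line and exploiting the measure-singular waist $\{|\xi|\approx 1\}$ (which is Lorentz-dual to the cone region near infinity) should yield a comparable logarithmic divergence.

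The main technical obstacle is thus the endpoint analysis: the stationary-phase and Bessel-asymptotic computations required at $(4,4)$ and its waist-centered analogue at $(3,3)$; by contrast, the counterexamples outside of $T$ follow from well-established Knapp, cone-focusing, and Lorentz-counting arguments.
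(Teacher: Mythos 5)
The three off-triangle necessity arguments (Knapp at a unit-scale point for $q<2p'$, a conic Knapp/high-frequency cap for $q>3p'$, and summing Lorentz-translates for $q<p$) are essentially the same as the paper's examples, so that part of the proposal is fine. The $(4,4)$ endpoint via the axisymmetric high-frequency annulus and Bessel asymptotics is a legitimate alternative route: the paper offers both "conic scaling + Fatou + failure of $L^4\to L^4$ for the cone in $\R^3$" and "directly using stationary phase," and your annulus computation is a concrete instance of the latter (the annulus on $\Gamma\cap\{|\xi|\sim R\}$ is, up to $O(1)$ errors, the annulus on the cone, whose $L^4$ extension norm carries the $\log R$ factor).

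The genuine gap is at $(3,3)$. You acknowledge the issue but then gesture at an "analogous focusing-type example adapted to the Knapp scaling line and exploiting the measure-singular waist," which I do not think can work. First, the apparent singularity of $\d\sigma=\d\xi/\hjp\xi$ at the waist $\{|\xi|=1\}$ is a coordinate artifact: in cylindrical coordinates $(\tau,\omega)\mapsto(\tau,\sqrt{1+\tau^2}\,\omega)$ on $\Gamma$, one computes $\d\sigma=\d\tau\,\d\omega$, so the waist is not geometrically or measure-theoretically distinguished; indeed Lorentz boosts move waist points off the waist while preserving $\d\sigma$. Second, and more fundamentally, the failure of $\scriptR^*(3\to 3)$ on the Knapp corner $q=2p'=p$ is not a stationary-phase/focusing phenomenon at all: any single wave packet (Knapp cap) gives a bounded ratio at $(3,3)$, as you note. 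What breaks at this endpoint is a \emph{Kakeya/Besicovitch} phenomenon — a superposition of many boosted Knapp caps whose dual tubes compress into a set of small measure. The paper handles this either by parabolically rescaling near a point of $\Gamma$ and invoking Fatou to reduce to the known failure of $L^3\to L^3$ for the hyperbolic paraboloid, or by citing the Kakeya-like construction of \cite{BCSS}, which rules out even restricted weak type at $(3,3)$. Your proposal needs to replace the "waist-focusing" idea with one of these two: either a scaling/limiting argument to the hyperbolic paraboloid (cheap, since $(3,3)$ sits on the parabolic scaling line $q=2p'$), or an explicit Besicovitch-type family of caps.

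A minor remark: your bilinear-line argument for $q<2p'$ says "every point is Lorentz-equivalent" to the waist point $(0,(1,0))$; this is true in the sense that Lorentz boosts act transitively on $\Gamma$, but it is worth observing that the cap can be placed anywhere since the measure $\d\sigma$ and the operator $\scriptE$ are both Lorentz-covariant, so the paper simply works at a fixed basepoint.
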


Though the argument is fairly simple, the authors had not expected to find any exponent pairs along the diagonal $q=p$ at which $\scriptR^*(p \to q)$ holds.  The Kakeya-like example of \cite{BCSS} rules out even a restricted weak type inequality at the endpoint $(3,3)$, but we are not able to exclude the possibility that some weaker inequality might be valid at the endpoint $(4,4)$, and, in fact, the analogous question for the extension operator associated to the cone also seems to be open.

\subsection*{Overview} We prove the negative result, Proposition~\ref{P:neg}, in Section~\ref{S:neg} via familiar Knapp and Kakeya-like examples.  In Section~\ref{S:classical}, we give a brief, self-contained proof of Theorem~\ref{T:pos} in the classical range, $q>4$.  We also record a family of $L^2$-based mixed norm (Strichartz) inequalities which will be useful later on.  In moving beyond the classical range, we begin with our unconditional result:  that $\scriptR_0^*(p \to q)$ holds in the bilinear range $q \geq 2 p'$ and $q>\tfrac{10}3$.  This argument will occupy Section~\ref{S:unconditional bounds in zero region}, in which we establish the bilinear-to-linear deduction for this surface, and Section~\ref{S:bilinear tiles}, in which we prove an $L^2 \times L^2 \to L^{q/2}$ bilinear extension theorem for appropriately separated ``tiles''.  The geometry of the surface, namely the double ruling, plays a critical role, because it enables us to define a bi-parameter family of ``tiles'' that is quite close to that which naturally arises in the case of the hyperbolic paraboloid.  In Section~\ref{S:bilinear conic}, we note that, via a Lorentz boost, $\scriptR_0^*(p \to q)$ implies bounds on unit width ``sectors'' at scale $N$, and we use bilinear extension estimates (similar to those for the cone) to deduce from our unconditional result uniform bounds for the extension from dyadic frusta
$$
\Gamma_N :=\{(\tau,\xi) \in \Gamma : |\xi| \sim 2^N\}.
$$
In Section~\ref{S:decoupling}, we use conic decoupling to extend the deduction in Section~\ref{S:bilinear conic} and obtain a conditional result in a larger (but likely non-optimal) range.  Finally, in Section~\ref{S:annuli to global}, we prove that uniform estimates for the extension from dyadic frusta imply global bounds for $\scriptE$.   

\subsection*{Notation} We will use throughout the standard notation $A \lesssim B$ to mean that $A \leq CB$, for a constant $C$ that is allowed to depend on the Lebesgue exponents in question and also, in the case of conditional results, on assumed finite bounds on the operator norms of the extension operator.  The expression $A \sim B$ means $A \lesssim B$ and $B \lesssim A$.

\section{The negative result:  Proof of Proposition~\ref{P:neg}} \label{S:neg}

\begin{proof}[Proof of necessity of $q \geq 2p'$] We apply the usual Knapp example.  Indeed, if $f_\delta$ is a smooth bump function of radius $0<\delta < 1$ on $\Gamma$, centered at $(0,0,1)$, then $\|f_\delta\|_p \sim \delta^{\frac2p}$, while $|\scriptE f_\delta| \sim \delta^2$ on a tube of length $c\delta^{-2}$ and width $c\delta^{-1}$, yielding $\|\scriptE f_\delta\|_q \gtrsim \delta^{2-\frac4q}$.  
In more detail, set $\phi(\zeta):=\sqrt{1+\zeta_1^2-\zeta_2^2}$, and note that 
$\Gamma\supseteq\{(\zeta_1,\zeta_2,\phi(\zeta)):\zeta_2^2<\frac14(1+\zeta_1^2)\}$.
Affine surface measure on $\Gamma$ is expressed in these coordinates by $\d\sigma(\zeta)=\frac{\d\zeta}{\phi(\zeta)}$. 
Given sufficiently small $\delta>0$, let 
\begin{equation}\label{eq:Cap}
    \scriptC_\delta=\{(\zeta,\phi(\zeta))\in\Gamma: |\zeta|\leq\delta\}
\end{equation}
denote the cap on $\Gamma$ of radius $\delta$ centered at $(0,0,1)$, and consider its indicator function $\mathbbm 1_\delta=\mathbbm 1_{\scriptC_\delta}$. Then
\begin{equation}\label{eq:Knapp1}
\|\mathbbm 1_\delta\|_{L^p(\Gamma;\d\sigma)}=\sigma(\scriptC_\delta)^{\frac1p}\sim\delta^{\frac2p},
\end{equation}
whereas
\[\scriptE(\mathbbm 1_\delta)(t,x)=e^{ix_2}\int_{|\zeta|\leq\delta}e^{i(t,x_1,x_2)\cdot(\zeta_1,\zeta_2,\phi(\zeta)-1)}\frac{\d\zeta}{\phi(\zeta)}.\]
Since $|\zeta_1|,|\zeta_2|\leq\delta$, it follows that $|\phi(\zeta)-1|\leq C\delta^2$.
Consequently, if $|t|,|x_1|\leq C_1^{-1}\delta^{-1}$ and $|x_2|\leq C_1^{-1}\delta^{-2}$, and $C_1$ is sufficiently large, then
\begin{align*}
    |\scriptE(\mathbbm 1_\delta)(t,x)|
    &=\left|\int_{|\zeta|\leq\delta}e^{i(t,x_1,x_2)\cdot(\zeta_1,\zeta_2,\phi(\zeta)-1)}\frac{\d\zeta}{\phi(\zeta)}\right|\\
    &\geq\int_{|\zeta|\leq\delta}\cos(t\zeta_1+x_1\zeta_2+x_2(\phi(\zeta)-1))\frac{\d\zeta}{\phi(\zeta)}
    \geq\frac{\delta^2}{2},
\end{align*}
and therefore
\begin{equation}\label{eq:Knapp2}
    \|\scriptE(\mathbbm 1_\delta)\|_{L^q(\R^3)}^q=\int_{\R^3} |\scriptE(\mathbbm 1_\delta)(t,x)|^q \d t \d x\gtrsim\delta^{2q}(\delta^{-1}\delta^{-1}\delta^{-2})=\delta^{2q-4}.
\end{equation}
If $\scriptE:L^p(\Gamma;\d\sigma)\to L^q(\R^3)$ defines a bounded operator, then from \eqref{eq:Knapp1} and \eqref{eq:Knapp2} it follows that
\[\delta^{2-\frac{4}{q}}\lesssim\|\scriptE(\mathbbm 1_\delta)\|_{L^q(\R^3)}\lesssim\|\mathbbm 1_\delta\|_{L^p(\Gamma;\d\sigma)}\sim\delta^{\frac{2}{p}}.\]
Sending $\delta \searrow 0$ implies $\frac2p \leq 2-\frac4q$, as claimed.  
\end{proof}

\begin{proof}[Proof of necessity of $q \leq 3p'$] We apply a conic Knapp example. 
Details are analogous to the previous paragraph, so we shall be brief.
For $r>0$ sufficiently small and $\lambda > 0$ sufficiently large, consider the set 
\begin{equation}\label{eq:Gammarla}
\Gamma_{r,\lambda} := \left\{(\tau,\xi) \in \Gamma : \tau \sim \lambda, \: |\tfrac{\xi}{|\xi|}-e_1| < r\right\},
\end{equation}
where $e_1 \in \R^2$ denotes the first coordinate vector.  Let $f_{r,\lambda}$ be a smooth bump function adapted to $\Gamma_{r,\lambda}$.  Then $\|f_{r,\lambda}\|_p \sim (\lambda r)^{\frac1p}$, and $|\scriptE f_{r,\lambda}| \sim \lambda r$ on a slab of length $c(\lambda r^2)^{-1}$ (perpendicular to $\Gamma_{r,\lambda}$), width $c(\lambda r)^{-1}$ (tangent to $\Gamma_{r,\lambda}$ in the angular direction), and mini width $c\lambda^{-1}$ (tangent to $\Gamma_{r,\lambda}$ in the radial direction).  Thus $\|\scriptE f_{r,\lambda}\|_q \gtrsim (\lambda r)^{1-\frac 3q}$.  Holding $r$ fixed and sending $\lambda \to \infty$ yields $\frac1p \geq 1-\frac3q$, as claimed.  
\end{proof}

\begin{proof}[Proof of necessity of $q \geq p$] We apply the standard example of summing many disjoint, highly modulated caps whose $L^p$ and $L^q$ norms are all comparable to one another.  For $k \geq 1$, consider the functions $g_k(\tau,\xi):=e^{i(t_k,x_k)(\tau,\xi)}f_{2^{-k},2^k}(\tau,\xi)$, with the $(t_k,x_k)$ to be determined. 
Here, $f_{2^{-k},2^k}$ is a smooth bump function adapted to $\Gamma_{2^{-k},2^k}$; recall \eqref{eq:Gammarla}.
 Then the previous paragraph implies that $\|g_k\|_p \sim 1$, while $\|\scriptE g_k\|_q \gtrsim 1$.  For $(t_k,x_k)$ sufficiently widely separated, we then have that $\|\sum_{k=1}^N g_k\|_p \sim N^{\frac1p}$ and $\|\scriptE (\sum_{k=1}^N g_k)\|_q \gtrsim N^{\frac1q}$, from which we see the necessity of $q \geq p$. 
\end{proof}

\begin{proof}[Proof of necessity of $(p,q) \neq (3,3), (4,4)$]  This follows by either using parabolic, resp.\ conic scaling, Fatou's lemma, and the fact that the corresponding inequalities do not hold for the hyperbolic paraboloid nor for the cone, or by directly using stationary phase.  At the endpoint $(3,3)$, the Kakeya-like example of \cite{BCSS} rules out the possibility of even a restricted weak type inequality, but the authors have not been able to exclude the possibility that weaker inequalities might hold at the endpoint $(4,4)$.   
\end{proof}

\section{Proof in the classical range $q>4$} \label{S:classical}
 
We will use the mixed-norm Strichartz inequality 
\begin{equation}\label{mixednormStr}
\begin{gathered}
\|\scriptE f\|_{L^r_tL^s_x(\R^{1+2})} \lesssim \|\hjp{\xi}^{\frac1r-\frac1s}f\|_{L^2(\Gamma;\d\sigma)}, \\ 
2 \leq r,s; \quad s < \infty; \quad \frac 2r + \frac{1+\theta}{s} = \frac{1+\theta}{2},\text{ for some }\theta\in[0,1].
\end{gathered}
\end{equation}
This classical estimate follows from a straightforward modification of the methods in \cite{KO11}.  
As \eqref{mixednormStr} implies boundedness of $\scriptE$ in the range $p=2$, $4 \leq q \leq 6$, by interpolation it will suffice to restrict attention to the conic line $q=3p'$.

\begin{proposition} \label{P:q=3p'}
Theorem~\ref{T:pos} holds on the line $q=3p'$.  
\end{proposition}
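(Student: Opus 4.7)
The plan is to prove $\scriptR^*(p \to q)$ on the conic line $q = 3p'$ in the classical range $q > 4$ via a dyadic frequency decomposition combined with the conic scale-invariance of this line.

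First, I record the endpoint $(p,q) = (2,6)$, which is $\scriptE\colon L^2(\Gamma;\d\sigma) \to L^6(\R^3)$, supplied directly by \eqref{mixednormStr} with $r = s = 6$, $\theta = 0$. Complex interpolation with the trivial bound $\scriptE\colon L^1(\Gamma;\d\sigma) \to L^\infty(\R^3)$ extends this immediately to $\scriptR^*(p \to q)$ on the conic line for all $p \in [1,2]$, i.e., $q \in [6,\infty]$, disposing of the ``lower half'' of the proposition.

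The technical heart of the argument is the remaining range $p \in (2, 4)$, $q = 3p' \in (4, 6)$. Here I decompose $f = \sum_{N \geq 0} f_N$ with $f_N = f\,\one_{\Gamma_N}$ and $\Gamma_N = \{(\tau, \xi) \in \Gamma : |\xi| \sim 2^N\}$. For each $N$, the conic dilation $(\tau, \xi) \mapsto 2^{-N}(\tau, \xi)$ carries $\Gamma_N$ onto a bounded piece of the surface $\{|\xi|^2 - \tau^2 = 2^{-2N}\}$, which converges to a bounded piece of the light cone as $N \to \infty$. Since the line $q = 3p'$ is precisely the conic scaling line, the inequality $\|\scriptE f_N\|_q \lesssim \|f_N\|_p$ is scale-invariant; combined with the classical $L^p \to L^q$ extension bound on the scaling line of the cone in $\R^3$ for $q > 4$ and a perturbation argument to handle the approach to the cone, this yields the uniform per-frustum bound.

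To sum the dyadic pieces, I use that $\scriptE f_N = \widehat{f_N\,\d\sigma}$ has Fourier support in the annulus $\{|y| \sim 2^N\} \subset \R^3$. By $L^q$ Littlewood--Paley theory (valid for $q > 1$) and Minkowski's inequality in $L^{q/2}$ (valid since $q > 2$),
\[
\|\scriptE f\|_{L^q(\R^3)} \lesssim \Bigl\|\Bigl(\sum_N |\scriptE f_N|^2\Bigr)^{1/2}\Bigr\|_{L^q} \leq \Bigl(\sum_N \|\scriptE f_N\|_q^2\Bigr)^{1/2} \lesssim \Bigl(\sum_N \|f_N\|_p^2\Bigr)^{1/2}.
\]
For $p \leq 2$ this is $\leq \|f\|_p$ by the embedding $\ell^p \hookrightarrow \ell^2$, reproving the first step. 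The \textbf{main obstacle} is the range $p \in (2, 4)$, where the $\ell^2$ square-function summation loses against $\|f\|_p = (\sum_N \|f_N\|_p^p)^{1/p}$. To close this gap, I would replace the lossy $\ell^2$ step by a bilinear estimate for pairs $(f_N, f_{N'})$ with $N < N'$ (exploiting transversality between widely separated frusta) and invoke a Tao--Vargas--Vega type bilinear-to-linear deduction along the scaling line $q = 3p'$. This is the delicate technical step.
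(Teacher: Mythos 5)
Your route is genuinely different from the paper's --- the paper proves Proposition~\ref{P:q=3p'} by slicing $\Gamma$ into circles $\{|\xi|=r\}$, applying the Zygmund restriction theorem for $\mathbb{S}^1$ on each slice, and performing the radial integration with Lorentz-space versions of Hausdorff--Young and H\"older (it is precisely the identity $2\alpha=q$, which holds exactly when $q=3p'$, that makes the weak-type factor $F(s)=(1+s^2)^{-1/q}$ land in $L^{\alpha,\infty}$). Your first step (Strichartz at $(2,6)$ interpolated with $(1,\infty)$) correctly disposes of $p\leq 2$, and the claim that a uniform per-frustum bound holds on the conic scaling line is plausible, though the ``perturbation argument'' is unsubstantiated: after rescaling, the surfaces $\{|\xi|^2-\tau^2=2^{-2N},\,|\xi|\sim 1\}$ converge to the cone, but you would need a quantitative stability statement for the cone's $L^p\to L^q$ extension bound, uniform in $N$.

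The real gap is the one you flag yourself: summing over $N$ when $p\in(2,4)$, $q\in(4,6)$. As you note, Littlewood--Paley plus Minkowski yields $(\sum_N\|f_N\|_p^2)^{1/2}$, which is not controlled by $\|f\|_p=(\sum_N\|f_N\|_p^p)^{1/p}$ once $p>2$. The bilinear-plus-Tao--Vargas--Vega fix you gesture at is the right instinct, but it is genuinely delicate in this regime because $q>4$. The paper's own bilinear summation over frusta (Lemmas~\ref{L:annular Strichartz} and~\ref{L:reduce annuli p=q}) is built around the expansion
\[
\Big\|\sum_N \scriptE f_N\Big\|_q^q \lesssim \sum_{N_1\geq N_2\geq N_3\geq N_4}\Big\|\prod_{i=1}^4 \scriptE f_{N_i}\Big\|_{q/4}^{q/4},
\]
which uses $q\leq 4$; for $q>4$ this pullout of the $q$-th power is not available, and the natural alternative (almost-orthogonality of the $\scriptE f_{N_1}\scriptE f_{N_2}$ in $L^{q/2}$) fails because the sumsets $\Gamma_{N_1}+\Gamma_{N_2}$ with $N_1\leq N_2$ fixed and $N_1$ varying all overlap near radius $2^{N_2}$. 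So the argument, as written, does not close, and closing it would require machinery of at least the same order of difficulty as the slicing proof. The slicing proof has the advantage of treating all frequency scales simultaneously through the radial integral, sidestepping the dyadic summation problem entirely, at the cost of the Lorentz-space bookkeeping needed to hit the scaling line exactly.
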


This result can be proved by slicing (see \cite{Drury-Guo, Nicola}).
For the convenience of the reader, we include some details.
\begin{proof}[Proof of Proposition \ref{P:q=3p'} via slicing]
By interpolation, it suffices to restrict attention to $2\leq p<4$ and $q=3p'$. Since $4<q\leq 6$, it follows that $p<q$.
In polar coordinates, we have that
\begin{align*}
    \scriptE f(t,x) =\int_{|\xi|>1} f(\xi)e^{i(t,x)\cdot(\hjp{\xi},\xi)}\frac{\d\xi}{\hjp{\xi}} =\int_1^\infty \left(\int_{\mathbb{S}^1} f(r\omega) e^{irx\cdot\omega} \d\gamma(\omega)\right) e^{it\sqrt{r^2-1}}\frac{r}{\sqrt{r^2-1}} \d r,
\end{align*}
where $\gamma$ denotes the usual arc length measure on the unit circle $\mathbb{S}^1\subset\R^2$.
Changing variables $\sqrt{r^2-1}=s$, and applying the Lorentz space version of the Hausdorff--Young inequality together with Minkowski's integral inequality,\footnote{These are valid moves since $\max\{q,q'\}=q>2$.} yields
\begin{multline}\label{eq:LqEf}
    \|\scriptE f\|_{L^q(\R^3)}\lesssim \left\|\int_{\mathbb{S}^1}e^{i\sqrt{1+s^2}x\cdot\omega} f(\sqrt{1+s^2}\omega) \d\gamma(\omega)\right\|_{L_x^q L_s^{q',q}}\\
\lesssim\left\|\int_{\mathbb{S}^1}e^{i\sqrt{1+s^2}x\cdot\omega} f(\sqrt{1+s^2}\omega) \d\gamma(\omega)\right\|_{L_s^{q',q} L_x^q }.
\end{multline}
A further change of variables $y=\sqrt{1+s^2}x$ allows us to estimate the inner norm on the right-hand side of \eqref{eq:LqEf} as follows:
\begin{align*}
\left\|\int_{\mathbb{S}^1}e^{i\sqrt{1+s^2}x\cdot\omega} f(\sqrt{1+s^2}\omega) \d\gamma(\omega)\right\|_{L_x^q }   
&=(1+s^2)^{-1/q}
\left\|\int_{\mathbb{S}^1}e^{iy\cdot\omega} f(\sqrt{1+s^2}\omega) \d\gamma(\omega)\right\|_{L_y^q}\\
&\lesssim (1+s^2)^{-1/q} \|f(\sqrt{1+s^2}\cdot)\|_{L^p(\mathbb{S}^1)}
\end{align*}
where the latter estimate follows from the $L^p(\mathbb{S}^1;\d\gamma)\to L^q(\R^2)$ adjoint restriction inequality on the unit circle $\mathbb{S}^1$; see \cite{Zyg74}.
Going back to \eqref{eq:LqEf}, we then have that
\begin{align*}  
\|\scriptE f\|_{L^q(\R^3)}
&\lesssim\left\|(1+s^2)^{-1/q}\|f(\sqrt{1+s^2}\cdot)\|_{L^p(\mathbb{S}^1)}\right\|_{L_s^{q',q}}\\
&\lesssim\left\|(1+s^2)^{-1/q}\|f(\sqrt{1+s^2}\cdot)\|_{L^p(\mathbb{S}^1)}\right\|_{L_s^{q',p}}
\end{align*}
where the latter estimate holds since $p<q$.
Denote
\[F(s):=(1+s^2)^{-1/q} \text{ and }G(s):=\|f(\sqrt{1+s^2}\cdot)\|_{L^p(\mathbb{S}^1)},\]
and let $\alpha>0$ be such that $\frac 1{q'}=\frac1{\alpha}+\frac1p$.
Then the Lorentz space version of H\"older's inequality implies
\[\|\scriptE f\|_{L^q(\R^3)}\lesssim \|F\|_{L_s^{\alpha,\infty}} \|G\|_{L_s^{p,p}}.\]
To check that $F\in L_s^{\alpha,\infty}$, simply note that $2\alpha=q$ since $\frac 1{q'}=\frac1{\alpha}+\frac1p$ and $q=3p'$.
Finally, reverting back to the original variable $r=\sqrt{1+s^2}$, we see that
\begin{align*}
\|G\|_{L_s^{p,p}}^p
=\|G\|_{L_s^{p}}^p
=\int_0^\infty \|f(\sqrt{1+s^2}\cdot)\|_{L^p(\mathbb{S}^1)}^p \d s\\
=\int_1^\infty \int_{\mathbb{S}^1} |f(r\omega)|^p \d\gamma(\omega) \frac{r}{\sqrt{r^2-1}} \d r
=\|f\|_{L^p(\Gamma;\d\sigma)}^p.
\end{align*}
This shows the boundedness of the operator $\scriptE:L^p(\Gamma;\d\sigma)\to L^q(\R^3)$ whenever $2\leq p<4$ and $q=3p'$, as desired. 
\end{proof}

\section{Unconditional bounds at low frequencies in the bilinear range}\label{S:unconditional bounds in zero region}

We turn now to the heart of the article, the proof of Theorem~\ref{T:pos} beyond the classical range.  We begin by bounding the low-frequency extension operator $\scriptE_0$ in the bilinear range ($q >\tfrac{10}3$, $p \geq (\tfrac q2)'$), which will occupy the next two sections.  The companion article, \cite{BenHypHyp}, bounds $\scriptE_0$ in the polynomial range ($q > 3.25$, $p \geq (\frac{q}{2})'$), except on the scaling line $p = (\frac{q}{2})'$. Utilizing the results of this and the next section, we can extend the strictly local ($p > (\frac{q}{2})'$) inequalities of \cite{BenHypHyp} to the scaling line.  We sketch this argument in Section \ref{S:bilinear tiles}; see Remark \ref{scaling line}.

It will suffice to prove extension estimates for a small region contained in a rotated version of the hyperboloid.  Let
\begin{align}\label{param hyp}
\Sigma := \Big\{\Big(\sqrt{1+\xi_1^2-\xi_2^2},\xi\Big) \in \R \times \R^2 : |\xi| \leq \frac{1}{2}\Big\},
\end{align}
and let $U$ be a small neighborhood of the origin that we will choose.  We will consider the subset of $\Sigma$ that lies above $U$.   Abusing notation, we define the extension operator $\fE_0$ by
\begin{align*}
\fE_0 f(t,x) := \int_{U} e^{i(t,x)\cdot(\sqrt{1+\xi_1^2-\xi_2^2},\xi)}f(\xi)\d\xi.
\end{align*}
This definition of $\fE_0$ is not quite the same as the one given in Section 1; however, the two operators obey the same range of $L^p \rightarrow L^q$ estimates, as one can see by using the triangle inequality and symmetries of the operator.  We aim to prove the following result.

\begin{theorem}\label{linear est zero region}
If $q > 10/3$, then $\|\fE_0 f\|_q \lesssim \|f\|_{(q/2)'}$ for all $f \in L^{(q/2)'}(U)$.
\end{theorem}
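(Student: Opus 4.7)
The plan is a bilinear-to-linear deduction: assuming the $L^2 \times L^2 \to L^{q/2}$ bilinear tile estimate that will be proved in Section~\ref{S:bilinear tiles}, I aim to deduce the scaling-line linear bound $\fE_0: L^{(q/2)'} \to L^q$. Since $\Sigma$ is a small perturbation of the hyperbolic paraboloid $\tau \approx 1 + \tfrac12(\xi_1^2 - \xi_2^2) = 1 + \tfrac12(\xi_1 - \xi_2)(\xi_1 + \xi_2)$ near the origin, it is natural to work with a bi-parameter family of rectangular ``tiles'' $\{Q_{j,k}\}$ in $U$, with sides of length $2^{-j_1}$ and $2^{-j_2}$ (where $j = (j_1, j_2) \in \Z_{\geq 0}^2$) aligned with the two asymptotic ruling directions $\xi_1 = \pm \xi_2$. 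Decomposing $f = \sum_Q f_Q$ with $f_Q$ supported on $Q$, one then has $|\fE_0 f|^2 = \sum_{Q, Q'} \fE_0 f_Q \cdot \overline{\fE_0 f_{Q'}}$, reducing matters to controlling the sum of bilinear interactions.

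Next I would perform a Whitney-type decomposition of $U \times U$ off the diagonal into ``transverse'' pairs $(Q, Q')$ of comparable scale satisfying the separation hypothesis required by the Section~\ref{S:bilinear tiles} bilinear estimate, so that $\|\fE_0 f_Q \cdot \overline{\fE_0 f_{Q'}}\|_{L^{q/2}} \lesssim \|f_Q\|_2 \|f_{Q'}\|_2$ for $q > 10/3$. To recombine, one observes that for distinct Whitney pairs at a fixed scale, the Fourier supports of $\fE_0 f_Q \cdot \overline{\fE_0 f_{Q'}}$ are essentially disjoint, yielding Plancherel-type orthogonality in $L^2$; combined with the orthogonality of the $f_Q$ themselves, this gives an $\ell^2 \to L^{q/2}$ square-function type bound on $|\fE_0 f|^2$. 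Interpolation with trivial $L^\infty$-based bounds then converts this into the desired $L^{(q/2)'} \to L^q$ estimate along the lines of the Tao--Vargas--Vega framework. Summation over the dyadic scales $(j_1, j_2)$ should introduce only losses absorbed by the polynomial gain coming from the strict inequality $q > 10/3$.

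The main obstacles are threefold. First, the Whitney combinatorics is genuinely bi-parameter: the two ruling scales $j_1$ and $j_2$ may differ substantially, so one must sum over $\Z_{\geq 0}^2$, and only a gain from $q > 10/3$ in the bilinear exponent ensures convergence. Second, one must verify that transversality of tiles with respect to $\Sigma$ coincides, up to constants, with transversality with respect to the paraboloid approximation, so that the Section~\ref{S:bilinear tiles} hypothesis is genuinely met by each Whitney pair; this is where the choice of neighborhood $U$ gets pinned down. Third, reaching the scaling-line endpoint $p = (q/2)'$ rather than merely $p > (q/2)'$ demands a sharp orthogonality recombination, typically via a Bourgain-style interpolation between an $\ell^2$-valued bilinear bound and a trivial bound; this is subtler here than in the elliptic or paraboloid case precisely because the tile family has two independent scale parameters and therefore does not scale homogeneously under a single parabolic dilation.
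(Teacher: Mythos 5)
Your strategy---a tile decomposition adapted to the double ruling, a Whitney decomposition of $U \times U$, the bilinear $L^2\times L^2\to L^{q/2}$ estimate of Section~\ref{S:bilinear tiles}, and an orthogonality recombination---matches the paper's framework in outline, but there is a genuine gap precisely at your third obstacle, the scaling-line endpoint $p=(q/2)'$. The Tao--Vargas--Vega recombination you sketch (sum over Whitney pairs, almost-orthogonality, interpolation with a trivial bound) only yields $\scriptR_0^*(p\to q)$ for $p$ strictly larger than $(q/2)'$; the endpoint requires more, and the phrase ``Bourgain-style interpolation between an $\ell^2$-valued bilinear bound and a trivial bound'' does not supply it. The paper instead proves a restricted strong type inequality via the Vargas--Stovall density decomposition adapted to the bi-parameter tile structure: one first shows (Proposition~\ref{const fiber}) that $\|\fE_0\one_{\Omega'}\|_q \lesssim |\Omega|^{1-2/q}$ whenever $\Omega$ has constant $\pi^+$-fiber length, by bounding $|\Omega\cap 10\theta|$ with the coarea formula and summing a geometric series in $(j,k)$; next one decomposes such a set---by Lebesgue differentiation in $\pi^+$, then by $\pi^-$-fiber length, then by Lebesgue differentiation in $\pi^-$ (Lemmas~\ref{first step}--\ref{third step}, Proposition~\ref{decomposition})---into pieces $\Omega_\delta$ of small extension norm each covered by $O(\delta^{-C_0})$ tiles at the single scale $\Theta_{J,K}$; finally one stratifies a general $\Omega$ by $\pi^+$-fiber length $2^{-K}$ and recombines across $K$ (Lemma~\ref{lem6}) using a key off-diagonal bilinear bound with geometric decay in $|K-K'|$. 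Nothing in your sketch replaces this machinery, and without it the Whitney sum over the two-parameter family of scales does not close at $p=(q/2)'$.

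A secondary, fixable point: the tiles cannot be literal rectangles aligned with the asymptotic directions $\xi_1=\pm\xi_2$. They must be curvilinear, the $\Phi$-images of dyadic rectangles, so that the fibers of $\pi^\pm$ on each tile are arcs of the actual rulings $\ell^\pm_\xi$ and $\pi^\pm(\theta)$ is exactly a dyadic interval. This exactness is what makes the coarea/density bookkeeping (e.g., $|\Omega\cap 10\theta|\lesssim\min\{2^{-J},2^{-j}\}\min\{2^{-K},2^{-k}\}$, and the covering counts in Lemmas~\ref{first step} and~\ref{third step}) go through without accumulating errors across the unboundedly many scales $(j,k)$. Your second obstacle flags this but does not resolve it; the paper's Lemma~\ref{properties of maps} and Lemma~\ref{tiles} do the work.
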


Our starting point will be the $L^2$-based bilinear theory for $\Sigma$, which we will obtain by rescaling a result of Lee \cite{Lee neg}.

\subsection{Related tiles}

To state the bilinear estimate, we must first define ``related tiles", i.e.\@ pairs of subsets of $U$ adapted to the transversality conditions that arise in the bilinear method.  Here, the geometry of the hyperbolic hyperboloid will play a distinguished role, particularly the doubly ruled structure.  Given $(\tau,\xi) \in \Sigma$, the rulings of $\Sigma$ that contain $(\tau,\xi)$ are parametrized by the formulae
\begin{align*}
\ell_{(\tau,\xi)}^\pm(t) := (\tau,\xi) + t(\xi_1\tau \mp \xi_2,1+\xi_1^2, \xi_1\xi_2 \pm \tau),
\end{align*}
and their projections to the spatial coordinates are given by
\begin{align}\label{proj4.2}
\ell^\pm_\xi(t) := \xi + t\Big(1+\xi_1^2, \xi_1\xi_2 \pm \sqrt{1+\xi_1^2-\xi_2^2}\Big).
\end{align}

Fix an integer $n \geq 10$, and set $I := [-2^{-n},2^{-n})$ and $Q := I \times I$.  Moreover, let $D := D(0,1/{10})\subseteq\R^2$ denote the open disc of radius $\frac1{10}$ centered at the origin.  Define maps $\Phi : Q \rightarrow \R^2$  and $\pi^\pm : D \rightarrow \R$ by
\begin{align*}
\Phi(\zeta) := \frac{(\zeta_1\sqrt{1+\zeta_2^2} + \zeta_2\sqrt{1+\zeta_1^2}, \zeta_2-\zeta_1)}{\sqrt{1+\zeta_1^2} + \sqrt{1+\zeta_2^2}}
\end{align*}
and
\begin{align}\label{eq:defpi}
\pi^\pm(\xi) := \xi_1 - \frac{\xi_2(1+\xi_1^2)}{\xi_1\xi_2 \pm \sqrt{1+\xi_1^2-\xi_2^2}}.
\end{align}
Then (possibly after increasing $n$) $\Phi$ is a diffeomorphism satisfying $\frac13 \leq \det\nabla\Phi \leq 1$, $\|\Phi\|_{C^1} \leq 3$, and $\Phi(Q) \subseteq D(0,2^{-n+5})$.  Indeed, $\Phi$ can be viewed as a perturbation of the rotation $\zeta \mapsto \frac{1}{2}(\zeta_1 + \zeta_2, \zeta_2-\zeta_1)$.  Likewise, the maps $\pi^\pm$ are submersions satisfying $\|\pi^\pm\|_{C^1} \leq 3$.  We now set $U := \Phi(Q)$.

\begin{lemma}\label{properties of maps}
The maps $\Phi$ and $\pi^\pm$ satisfy the following geometric properties:
\begin{itemize}
\item[(1)]{ $\{\Phi(\zeta)\} = \ell_{(\zeta_1,0)}^+ \cap \ell_{(\zeta_2,0)}^-$ and $(\pi^\pm(\xi),0) \in \ell_\xi^\pm$ for every $\zeta \in Q$ and $\xi \in D$.}
\item[(2)]{The fibers of $\pi^\pm$ are precisely the line segments $\ell_\xi^\pm \cap D$ with $\xi \in D$.}
\item[(3)]{$\Phi^{-1} = (\pi^+ \times \pi^-)|_U$, where $\pi^+ \times \pi^- (\xi) := (\pi^+(\xi),\pi^-(\xi))$.}
\end{itemize}
\end{lemma}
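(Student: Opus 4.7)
The proof is essentially a direct verification of explicit formulas, organized around the observation that everything reduces to intersecting projected rulings with each other or with the $\xi_1$-axis. The plan is to establish (1) by computation, then deduce (2) and (3) as consequences.

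\smallskip

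\textbf{Step 1 (the formula for $\pi^\pm$).} I would start from the parametrization \eqref{proj4.2} and solve for the parameter $t$ at which $\ell_\xi^\pm(t)$ crosses the $\xi_1$-axis. The second coordinate of $\ell_\xi^\pm(t)$ is $\xi_2 + t(\xi_1\xi_2 \pm \sqrt{1+\xi_1^2-\xi_2^2})$, which vanishes at $t = -\xi_2/(\xi_1\xi_2 \pm \sqrt{1+\xi_1^2-\xi_2^2})$; note that the denominator does not vanish when $\xi \in D$ because $|\xi|<1/10$ forces $\sqrt{1+\xi_1^2-\xi_2^2} > |\xi_1\xi_2|$. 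Substituting this $t$ into the first coordinate of $\ell_\xi^\pm(t)$ gives exactly $\pi^\pm(\xi)$ as defined in \eqref{eq:defpi}, proving the second half of (1).

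\smallskip

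\textbf{Step 2 (the formula for $\Phi$).} For the first assertion of (1), I would intersect $\ell_{(\zeta_1,0)}^+$ and $\ell_{(\zeta_2,0)}^-$ explicitly. Their direction vectors simplify (since the base points sit on the $\xi_1$-axis) to $(1+\zeta_1^2,\sqrt{1+\zeta_1^2})$ and $(1+\zeta_2^2,-\sqrt{1+\zeta_2^2})$, respectively. Setting the two parametrizations equal and eliminating the second parameter from the second coordinate yields a single linear equation in the first parameter $t$, which can be solved to give
\begin{align*}
t = \frac{\zeta_2-\zeta_1}{\sqrt{1+\zeta_1^2}\bigl(\sqrt{1+\zeta_1^2}+\sqrt{1+\zeta_2^2}\bigr)}.
\end{align*}
Substituting back into $\ell_{(\zeta_1,0)}^+(t)$ and rationalizing yields exactly the formula for $\Phi(\zeta)$.

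\smallskip

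\textbf{Step 3 ((2) and (3) as consequences).} For (2), observe that the rulings in a fixed family through distinct points of $\Sigma$ are either equal or disjoint (a general fact for nondegenerate doubly ruled quadrics, verifiable here by the explicit formulas), so two points $\xi,\xi'\in D$ satisfy $\pi^\pm(\xi)=\pi^\pm(\xi')$ if and only if they lie on a common $\pm$-ruling; combining this with the fact from Step 1 that each ruling $\ell_\xi^\pm$ meets the $\xi_1$-axis at the single point $(\pi^\pm(\xi),0)$ identifies the fibers of $\pi^\pm$ with the traces $\ell_\xi^\pm\cap D$. Finally, (3) is immediate from (1): the point $\Phi(\zeta)$ lies on $\ell_{(\zeta_1,0)}^+$, so its $+$-ruling is $\ell_{(\zeta_1,0)}^+$, which meets the $\xi_1$-axis at $(\zeta_1,0)$; hence $\pi^+(\Phi(\zeta))=\zeta_1$, and symmetrically $\pi^-(\Phi(\zeta))=\zeta_2$. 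Thus $(\pi^+\times\pi^-)\circ\Phi=\mathrm{id}_Q$, and since $\Phi:Q\to U$ is a diffeomorphism by the earlier $C^1$ estimates, this identifies $\Phi^{-1}$ with $(\pi^+\times\pi^-)|_U$.

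\smallskip

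The computation in Steps 1 and 2 is the only real content, and it is elementary algebra; the main (mild) obstacle is bookkeeping — being careful that denominators do not vanish on $Q$ and $D$, which follows from the smallness hypotheses $n\geq 10$ and radius $1/10$. No step requires more than substitution and rationalization.
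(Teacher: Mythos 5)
Your proposal is correct and follows the same overall route as the paper: verify (1) directly, then deduce (2) and (3) from (1) together with the fact that two projected rulings from the same family through a common point must coincide. The main difference is in how that key geometric fact is handled. You invoke the classical statement that each family of rulings on a nondegenerate doubly ruled quadric partitions the surface, and pass to the projections via the bijection $\xi\mapsto(\phi(\xi),\xi)$ on $D(0,1/2)$. The paper instead proves the needed claim (``if $\eta'\in\ell_\eta^\pm$ with $|\eta|,|\eta'|\le 1/2$ then $\ell_{\eta'}^\pm=\ell_\eta^\pm$'') by a short contradiction argument: if $\ell_{\eta'}^+\ne\ell_\eta^+$, then $\ell_{\eta'}^+,\ell_{\eta'}^-,\ell_\eta^+$ are three distinct lines through $\eta'$, whose lifts to $\Sigma$ would give three distinct rulings through one point, which is impossible. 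These are equivalent in content; the paper's version is self-contained while yours defers to a background fact, and it would be worth one sentence to note that the partition property you cite applies to the \emph{projected} rulings precisely because the projection is a bijection on $D(0,1/2)$.

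One small overclaim: you write that (3) is ``immediate from (1),'' but the step ``$\Phi(\zeta)$ lies on $\ell_{(\zeta_1,0)}^+$, so its $+$-ruling \emph{is} $\ell_{(\zeta_1,0)}^+$'' uses the same partition/coincidence fact as (2), not (1) alone. Since you have already established that fact in Step 3, the argument stands, but the dependency should be acknowledged; the paper's phrasing ``Property (3) is a consequence of properties (1) and (2)'' is the accurate attribution. The explicit computations in Steps 1 and 2 (solving for the axis intercept, intersecting the two rulings, and checking that the denominators are bounded away from zero on $D$) are correct and fill in details the paper leaves as ``a straightforward calculation.''
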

\begin{proof}
Property (1) can be verified by a straightforward calculation.  It is helpful to reparametrize \eqref{proj4.2} so that the second coordinates of $\ell_{(\zeta_1,0)}^+(t)$ and $\ell_{(\zeta_2,0)}^-(t)$ are $t$ and $-t$, respectively.

Property (2) is a consequence of property (1) and the following claim: If $|\eta|,|\eta'| \leq 1/2$ and $\eta' \in \ell_\eta^\pm$, then $\ell_{\eta'}^\pm = \ell_\eta^\pm$. Indeed, assume the claim holds, and let $\xi \in D$ and $c \in \R$ satisfy $\pi^\pm(\xi) = c$. Then $\xi' \in \ell_\xi^\pm \cap D$ implies that $\ell_{\xi'}^\pm$ and $\ell_{\xi}^\pm$ are identical and thus have the same $x$-intercept. Consequently, $(\pi^\pm)^{-1}(c) \supseteq \ell_\xi^\pm \cap D$ by property (1).  If $\tilde{\xi}$ is another point such that $\pi^\pm(\tilde{\xi}) = c$, then applying the claim twice more shows that $\ell_{\tilde{\xi}}^\pm = \ell_{(c,0)}^\pm = \ell_\xi^\pm$.  Thus, $(\pi^\pm)^{-1}(c) = \ell_\xi^\pm \cap D$.  It remains to prove the claim.  Define $F : \Sigma \rightarrow D(0,1/2)$ by $F(\tau,\xi) := \xi$.  Then $F$ is an invertible map such that $F^{-1}(\ell_\xi^\pm \cap D(0,1/2))$ is a ruling of $\Sigma$ for every $|\xi| \leq 1/2$.  Suppose for contradiction that $\ell_{\eta'}^+ \neq \ell_\eta^+$.  Then the lines $\ell_{\eta'}^+,\ell_{\eta'}^-,\ell_\eta^+$ are distinct (as one can easily check) and intersect at $\eta'$, implying that $F^{-1}(\eta')$ belongs to three rulings of $\Sigma$. However, no three rulings of the hyperbolic hyperboloid intersect at a common point.  Thus, we must have $\ell_{\eta'}^+ = \ell_\eta^+$ and, by a similar argument, $\ell_{\eta'}^- = \ell_\eta^-$.

Property (3) is a consequence of properties (1) and (2).
\end{proof}

We also record that
\begin{align}\label{ortho lines}
\angle(\ell_{\eta}^\pm,\R(1,\pm1)) \leq 10^\circ
\end{align}
for all $\eta \in D$; in particular, we always have $\angle(\ell_{\eta}^+, \ell_{\eta'}^-) \geq 70^\circ$.

For each integer $j > n$, let $\fI_j$ denote the set of dyadic intervals of length $2^{-j}$ contained in $I$; that is,
\begin{align*}
\fI_j := \{[m2^{-j},(m+1)2^{-j}) : m \in \Z \cap [-2^{j-n}, 2^{j-n})\}.
\end{align*}
Given $I_j,I_j' \in \fI_j$, we write $I_j \sim I_j'$ if $I_j$ and $I_j'$ are non-adjacent but have adjacent dyadic parents.   

\begin{definition}
A \emph{tile} is any set of the form $\Phi(I_j \times I_k)$ with $(I_j,I_k) \in \fI_j \times \fI_k$ and $j,k > n$.  We denote by $\Theta_{j,k}$ the set of $2^{-j}\times 2^{-k}$ tiles.  Given $\theta,\theta' \in \Theta_{j,k}$, we write $\theta \sim \theta'$, and say that $\theta$ and $\theta'$ are \emph{related}, if $\pi^+(\theta) \sim \pi^+(\theta')$ and $\pi^-(\theta) \sim \pi^-(\theta')$. (Note that if $\theta = \Phi(I_j \times I_k)$, then $\pi^+(\theta) = I_j$ and $\pi^-(\theta) = I_k$.) Finally, given $C > 0$, we define $C\theta := \Phi(C\Phi^{-1}(\theta) \cap Q)$, where $C\Phi^{-1}(\theta)$ is the $C$-fold dilate of the rectangle $\Phi^{-1}(\theta)$ with respect to its center. 
\end{definition}

We can now state the bilinear restriction theorem for related tiles, which we will prove in the next section.
\begin{theorem}\label{T:bilinear tiles}
Let $\theta_1,\theta_2 \in \Theta_{j,k}$ be related tiles.  If $q > 10/3$, then 
\begin{align*}
\|\fE_0 f\fE_0 g\|_{q/2} \lesssim 2^{(j+k)(\frac{4}{q}-1)}\|f\|_2\|g\|_2
\end{align*}
for all $f \in L^2(\theta_1)$ and $g \in L^2(\theta_2)$.
\end{theorem}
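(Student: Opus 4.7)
The plan is to reduce the estimate to the bilinear extension theorem of Lee~\cite{Lee neg} for surfaces with indefinite second fundamental form, via the diffeomorphism $\Phi$ together with an anisotropic rescaling. The first step is to change variables $\xi = \Phi(\zeta)$, passing from the tiles $\theta_i \subset U$ to axis-aligned dyadic rectangles $R_i := \Phi^{-1}(\theta_i) \subset Q$. Since $\det \nabla \Phi \sim 1$, $L^2$-norms are preserved up to constants, and $\fE_0 f(t,x)$ is rewritten as an oscillatory integral over $R_i$ with phase $(t,x) \cdot S(\zeta)$, where $S(\zeta) := (\sqrt{1 + \Phi_1(\zeta)^2 - \Phi_2(\zeta)^2}, \Phi(\zeta))$ parametrizes $\Sigma$.

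The geometric observation driving the argument is that, by Lemma~\ref{properties of maps}(1), the curves $\zeta_2 \mapsto S(\zeta_1, \zeta_2)$ and $\zeta_1 \mapsto S(\zeta_1, \zeta_2)$ trace out $+$- and $-$-rulings of $\Sigma$, respectively. Since rulings are straight lines in $\R^3$, both $\partial_{\zeta_1}^2 S$ and $\partial_{\zeta_2}^2 S$ must be tangent to $\Sigma$, so only the mixed partial $\partial_{\zeta_1}\partial_{\zeta_2} S$ contributes to the normal second fundamental form. In these coordinates, $\Sigma$ is therefore a smooth perturbation of the standard hyperbolic paraboloid $\zeta \mapsto (\zeta, \zeta_1 \zeta_2)$, with uniform $C^\infty$-control on $U$.

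Next, perform the anisotropic rescaling $\zeta = c + A \zeta'$ with $A := \operatorname{diag}(2^{-j}, 2^{-k})$ and $c \in Q$ a convenient base point close to both $R_i$. The rescaled preimages become unit-scale rectangles $\tilde R_i$ which, by the related-tile hypothesis, are separated by $\gtrsim 1$ in \emph{both} the $\zeta_1'$ and $\zeta_2'$ directions. After modulating out the phase $e^{i(t,x) \cdot S(c)}$ and applying an invertible linear change of variables in physical space which normalizes the tangent and normal contributions (with resulting Jacobian $\sim 2^{2(j+k)}$), the phase takes the form
\begin{equation*}
y \cdot \zeta' + z\, \zeta_1' \zeta_2' + O(2^{-\min(j,k)}\,|(y,z)|),
\end{equation*}
a lower-order perturbation of the hyperbolic-paraboloid phase. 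Applying Lee's bilinear estimate~\cite{Lee neg} to the rescaled operator, and tracking the Jacobians (in particular $\|H_i\|_2 \sim 2^{(j+k)/2}\, \|f\|_{L^2(\theta_i)}$ under the rescaling and a factor $|\det A| = 2^{-(j+k)}$ out in front of each $\fE_0 f$), one arrives at the stated exponent $2^{(j+k)(4/q-1)}$ after raising to $L^{q/2}$.

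The chief technical obstacle is verifying that the $O(2^{-\min(j,k)})$ error in the rescaled phase produces only a uniformly bounded perturbation of the standard hyperbolic-paraboloid extension operator, so that Lee's bilinear theorem applies with a constant independent of $j$, $k$, and of the tile location. This should follow from a routine stability argument given the uniform $C^2$-control on $S$ over $Q$, but merits careful checking to ensure that the transversality hypotheses of Lee's theorem survive the perturbation.
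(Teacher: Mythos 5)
Your proposal is a genuinely different route from the paper's. The paper normalizes the pair of tiles by composing three \emph{exact} Lorentz symmetries of $\Sigma$ — a spatial rotation $R$, a boost $B$ (so that $BR$ sends the base point to $(1,0,0)$), and a Lorentz dilation $D$ that rectifies the anisotropy of the tiles — and then applies a single isotropic parabolic rescaling. Because $\Sigma$ is preserved exactly by $R,B,D$, no perturbation argument is needed; the cost is the geometric bookkeeping of Lemma~\ref{lem8}, which verifies that the separation of the tiles survives the Lorentz maps. Your approach replaces this with the diffeomorphism $\Phi$ together with a naive (non-symmetry-preserving) anisotropic rescaling $\operatorname{diag}(2^{-j},2^{-k})$, and absorbs the resulting distortion of $\Sigma$ into a stability argument for Lee's theorem. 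The upside is that the transversality hypotheses become essentially automatic in the $\Phi$-coordinates (since $\theta\sim\theta'$ means $\pi^\pm(\theta)\sim\pi^\pm(\theta')$, i.e.\ separation in \emph{both} of the $\zeta'$-variables); the downside is the perturbation step, which is where I want to flag an issue.

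Your observation that $\partial_{\zeta_1}^2 S$ and $\partial_{\zeta_2}^2 S$ are tangent to $\Sigma$ is correct, but for the rescaling to produce an error of size $O(2^{-\min(j,k)})$ you need more: \emph{all} pure partials $\partial_{\zeta_i}^n S$, $n\ge 2$, must be tangent (indeed parallel to $\partial_{\zeta_i}S$, because $\zeta_i\mapsto S(\zeta)$ parametrizes a straight line). Without this, the pure-power terms in the Taylor expansion of the normal component of the graph function are fatal: for example, a normal $\zeta_1'^3$ contribution of size $2^{-3j}$, after multiplying by the physical-space rescaling factor $2^{j+k}$ for the normal direction, would give $2^{k-2j}$, which blows up when $k\gg j$. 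Since the rulings are affine lines (in a non-affine parametrization), $\partial_1^n S\parallel\partial_1 S$ for every $n$, so all such terms have zero normal component, and the surviving errors in the normal graph function come only from mixed partials $\partial_1^a\partial_2^b S$ with $a,b\ge 1$, $a+b\ge 3$, each of which rescales to $O(2^{-\min(j,k)})$. You should state this explicitly. Relatedly, your displayed error ``$y\cdot\zeta' + z\zeta_1'\zeta_2' + O(2^{-\min(j,k)}|(y,z)|)$'' is written as an additive (and unbounded) correction to the phase; what you actually have, after also changing to graph coordinates $u\approx\zeta'$, is a graph function $h(u)=\mu u_1u_2 + O(2^{-\min(j,k)})$ in $C^2$, with $\mu\sim 1$, multiplied by $z$. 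This is the right form for the stability argument: Lee's transversality conditions are open in $C^2$, so for $n$ chosen large enough (so that $2^{-\min(j,k)}\le 2^{-n}$ is small), the constant is uniform. With these corrections the scheme goes through and reproduces the exponent $2^{(j+k)(4/q-1)}$, as your Jacobian bookkeeping already shows.
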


Next, we establish several properties of the tiles $\theta$, most of which are easy consequences of analogous properties of their rectangular counterparts $I_j \times I_k$.

\begin{definition}
Given a measurable set $\Omega \subseteq U$, we call any set of the form $\ell_\xi^\pm \cap \Omega$ with $\xi \in \Omega$ a \emph{$\pi^\pm$-fiber} of $\Omega$.  The \emph{length} of $\ell_\xi^\pm \cap \Omega$ is $\fH^1(\ell_\xi^\pm \cap \Omega)$, where $\fH^1$ denotes the one-dimensional Hausdorff measure. Given an integer $K \geq 0$, we define two sets
\begin{align*}
\Omega(K)^\pm := \{\xi \in \Omega : 2^{-K} \leq \fH^1(\ell_\xi^\pm \cap \Omega) < 2^{-K+1}\},
\end{align*}
and say that $\Omega$ has \emph{constant $\pi^\pm$-fiber length} $2^{-K}$ if $\Omega = \Omega(K)^\pm$.
\end{definition}

\begin{lemma}\label{tiles}
The tiles $\theta$ satisfy the following properties:
\begin{itemize}
\item[(1)]{There exists a set $N \subset U \times U$ of measure zero such that
\begin{align*}
(U \times U) \setminus N = \bigcup_{j,k > n}\,\,\,\bigcup_{\substack{\theta,\theta' \in \Theta_{j,k} :\\ \theta \sim \theta'}} \theta \times \theta';
\end{align*}
moreover, the union is disjoint.}
\item[(2)]{For each pair of related tiles $\theta\sim\theta' \in \Theta_{j,k}$, there exists a rectangle $R_{\theta,\theta'}$ such that $\theta + \theta' \subseteq R_{\theta,\theta'}$ and the collection $\{2R_{\theta,\theta'}\}_{\theta \sim \theta' \in \Theta_{j,k}}$ has bounded overlap.}

\item[(3)]{For every constant $C >0$, the collection of dilates $C\theta$, with $\theta \in \Theta_{j,k}$, has bounded overlap.}
\item[(4)]{For every $\theta \in \Theta_{j,k}$ and constant $C > 0$, we have $|C\theta| \sim 2^{-j-k}$.}
\item[(5)]{For every $\theta \in \Theta_{j,k}$ and constant $C > 0$, the set $C\theta$ has $\pi^+$-fibers and $\pi^-$-fibers of length $O(2^{-k})$ and $O(2^{-j})$, respectively.}
\end{itemize}
\end{lemma}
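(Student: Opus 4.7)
The plan is to derive each property by pulling back to the dyadic rectangles $I_j \times I_k$ in $Q$ via the diffeomorphism $\Phi$, using the identification $\Phi^{-1} = (\pi^+,\pi^-)$ from Lemma~\ref{properties of maps}(3) together with the $C^1$ bound $\|\Phi\|_{C^1} \leq 3$ and the two-sided Jacobian estimate $\tfrac13 \leq \det\nabla\Phi \leq 1$. For property~(1), one invokes the standard Whitney decomposition separately in each coordinate of $Q \times Q$: for a.e.\ $(\zeta,\zeta') \in Q \times Q$ there is a unique scale $j$ at which $\zeta_1,\zeta_1'$ lie in non-adjacent dyadic children of adjacent parents, and similarly a unique $k$ for the second coordinate. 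Taking the image under $\Phi \times \Phi$ of the resulting disjoint decomposition of $Q \times Q$ yields the decomposition of $U \times U$.

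Properties~(3), (4), and~(5) then follow quickly from this pullback viewpoint. The dyadic rectangles $C(I_j \times I_k)$ have bounded overlap at fixed scale (with constant depending on $C$) and area $\sim 2^{-j-k}$, and the bi-Lipschitz nature of $\Phi$ transfers both assertions to the tiles $C\theta$, giving~(3) and~(4). For~(5), Lemma~\ref{properties of maps}(3) identifies the $\pi^+$-fiber through $\xi \in C\theta$ with the image under $\Phi$ of the segment $\{\zeta : \zeta_1 = \pi^+(\xi)\} \cap C\Phi^{-1}(\theta) \cap Q$, and symmetrically for $\pi^-$; these segments have length $\lesssim 2^{-k}$ and $\lesssim 2^{-j}$ respectively, and the $C^1$ bound on $\Phi$ yields the claimed fiber-length bounds.

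The main obstacle is property~(2). I would take $R_{\theta,\theta'}$ to be a rectangle of dimensions $\sim 2^{-j} \times 2^{-k}$ with sides parallel to the two ruling directions $\R(1,1)$ and $\R(1,-1)$, exploiting that $\Phi$ is a perturbation of the rotation $\zeta \mapsto \tfrac12(\zeta_1 + \zeta_2, \zeta_2 - \zeta_1)$, so each tile $\theta = \Phi(I_j \times I_k)$ is already essentially a rectangle with these axes. The containment $\theta + \theta' \subseteq R_{\theta,\theta'}$ follows by linearizing $\Phi$ on the smallest common dyadic ancestor of $I_j \times I_k$ and $I_j' \times I_k'$ (where $\Phi$ looks like its differential plus a negligible quadratic error) and noting that the Minkowski sum of two rotated rectangles of the same orientation and sides $\sim 2^{-j}, 2^{-k}$ fits inside such a rectangle. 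For the bounded overlap of $\{2R_{\theta,\theta'}\}$ at fixed scale $(j,k)$, the center of $R_{\theta,\theta'}$ is approximately $\Phi(c) + \Phi(c')$, where $c,c'$ are the centers of the pre-tile rectangles; the related-pair condition allows only $O(1)$ choices of $(I_j,I_j')$ with $c_1+c_1'$ in any window of length $2^{-j}$ (and similarly $O(1)$ choices in the other coordinate at scale $2^{-k}$), so only $O(1)$ of the doubled rectangles can cover any given point.
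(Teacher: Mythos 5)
Your treatment of properties (1), (3), (4), and (5) matches the paper's: pull everything back to $Q$ via $\Phi^{-1}=(\pi^+,\pi^-)$, use the $C^1$ and Jacobian bounds, and for (5) use that $\Phi$ maps coordinate lines onto ruling segments. That part is fine.

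Property (2) has a genuine gap. You propose a rectangle $R_{\theta,\theta'}$ of dimensions $\sim 2^{-j}\times 2^{-k}$ with sides parallel to the \emph{fixed} directions $\R(1,1)$ and $\R(1,-1)$. But the tile $\theta=\Phi(I_j\times I_k)$ (say $j\geq k$) is swept out by segments lying on rulings $\ell^+_\xi$, and the relevant ruling $\ell^+_{c_\theta}$ through its center makes a fixed nonzero angle with $\R(1,1)$ --- the angle is $O(|c_\theta|^2)$, which for tiles near the boundary of $U$ is a positive constant depending only on $n$. Over the tile's long extent $\sim 2^{-k}$ this tilt produces a deviation from any $\R(1,1)$-parallel line that is $\gtrsim 2^{-k}$ times that constant, which dominates the required thin width $2^{-j}$ once $j-k$ exceeds a constant. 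So no rectangle with those fixed axes and dimensions $O(2^{-j})\times O(2^{-k})$ can contain $\theta$, let alone $\theta+\theta'$. Your alternative suggestion of linearizing $\Phi$ on the common ancestor has the same problem in disguise: the quadratic remainder over a region of diameter $\sim 2^{-k}$ is $O(2^{-2k})$, which also swamps $2^{-j}$ when $j>2k$. The paper avoids both issues by taking the major axis of $\tilde R_{\theta,\theta'}$ to be $\ell^+_{c_\theta}$ itself and then arguing \emph{exactly}: since $\Phi(\{c_\theta^+\}\times I)\subseteq\ell^+_{c_\theta}$, the $C^1$ bound alone gives $\dist(\Phi(\zeta),\ell^+_{c_\theta})\leq|\Phi(\zeta)-\Phi(c_\theta^+,\zeta_2)|\lesssim|\zeta_1-c_\theta^+|\lesssim 2^{-j}$, with no quadratic error. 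The same precision issue undermines your bounded-overlap sketch: deducing that $c_1+c_1'$ is pinned to a window of width $2^{-j}$ from knowledge of $\Phi(c)+\Phi(c')$ to precision $2^{-j}$ requires inverting $\Phi$ to that accuracy, and again the nonlinearity intrudes at scale $2^{-2k}\gg 2^{-j}$. The paper sidesteps this by projecting a common intersection point onto the major axes $\ell^+_{c_{\theta_i}}+c_{\theta_i}$, noting the halved point lies on $\ell^+_{c_{\theta_i}}\cap D$, and then applying $\pi^+$, which is \emph{constant} on that fiber --- so $c^+_{\theta_i}$ is recovered exactly, and the $C^1$ bound on $\pi^+$ gives the $O(2^{-j})$ comparison. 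You will want to adopt this ruling-aligned rectangle and the $\pi^\pm$-based comparison to close the argument.
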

\begin{proof}
Property (1) is obtained by applying the diffeomorphism $(\zeta,\zeta') \mapsto (\Phi(\zeta),\Phi(\zeta'))$ to the disjoint union
\begin{align*}
(Q \times Q) \setminus M = \bigcup_{j,k > n}\bigcup_{\substack{I_j,I_j' \in \fI_j : I_j \sim I_j'\\ I_k,I_k' \in \fI_k : I_k \sim I_k'}} I_j \times I_k \times I_j' \times I_k'
\end{align*}
with $M := \{(\zeta,\zeta') \in Q \times Q : \zeta_1 = \zeta_1' ~\text{or}~ \zeta_2 = \zeta_2'\}$.

Next, we prove property (2).  We may assume that $j \geq k$.  Fix $\theta \sim \theta' \in \Theta_{j,k}$ and set $c_\theta := \Phi(c_\theta^+,c_\theta^-)$, where $c_\theta^\pm$ is the center of the dyadic interval $\pi^\pm(\theta)$.  We claim that there exists an $O(2^{-j}) \times O(2^{-k})$ rectangle $\tilde{R}_{\theta,\theta'}$ with center $c_\theta$ and major axis $\ell_{c_\theta}^+$ such that $\theta \cup \theta' \subseteq \tilde{R}_{\theta,\theta'}$.  If $\zeta \in \Phi^{-1}(\theta \cup \theta')$, then $|\zeta_1 - c_\theta^+| \lesssim 2^{-j}$ and $|\zeta_2 - c_\theta^-| \lesssim 2^{-k}$, whence
\begin{align*}
\dist(\Phi(\zeta), \ell_{c_\theta}^+) \leq |\Phi(\zeta) - \Phi(c_\theta^+,\zeta_2)|\lesssim \|\Phi\|_{C^1}|\zeta_1 - c_\theta^+| \lesssim 2^{-j}
\end{align*}
and similarly $\dist(\Phi(\zeta),\ell_{c_\theta}^-) \lesssim 2^{-k}$.  Thus, $\theta \cup \theta'$ lies in the intersection of  $O(2^{-j})$- and $O(2^{-k})$-neighborhoods of $\ell_{c_\theta}^+$ and $\ell_{c_\theta}^-$, respectively.  By \eqref{ortho lines}, this intersection is nearly a rectangle; it lies in an $O(2^{-j}) \times O(2^{-k})$ rectangle with center $c_\theta$ and major axis $\ell_{c_\theta}^+$, which we may take as $\tilde{R}_{\theta,\theta'}$.  We can define the rectangle $R_{\theta,\theta'}$ as $R_{\theta,\theta'} := \tilde{R}_{\theta,\theta'}+\tilde{R}_{\theta,\theta'} = 2\tilde{R}_{\theta,\theta'} + c_\theta$.  We need to show that the collection $\{2R_{\theta,\theta'}\}_{\theta\sim\theta' \in \Theta_{j,k}}$ has bounded overlap.  Suppose $\theta_i \sim \theta_i' \in \Theta_{j,k},$ $i = 1,2,$ are such that ${2R}_{\theta_1,\theta_1'} \cap {2R}_{\theta_2,\theta_2'} \neq \emptyset$.  Then there exist points $\xi^i \in {2R}_{\theta_i,\theta_i'} \cap (\ell_{c_{\theta_i}}^+ + c_{\theta_i})$ such that $|\xi^2-\xi^1| \lesssim 2^{-j}$.  Since $c_{\theta_i} \in \ell_{c_{\theta_i}}^+$, it follows that $\xi^i/2 \in \ell_{c_{\theta_i}}^+$.  Moreover, $|\xi^i - 2c_{\theta_i}| \lesssim 2^{-k}$, so if $n$ (and therefore $k$) is sufficiently large, then $\xi^i/2 \in D$.  Since $\pi^+$ is constant on $\ell_{c_{\theta_i}}^+ \cap D$, we see that
\begin{align*}
|c_{\theta_2}^+ - c_{\theta_1}^+| = |\pi^+(\xi^2/2) - \pi^+(\xi^1/2)| \leq \|\pi^+\|_{C^1}|\xi^2/2 - \xi^1/2| \lesssim 2^{-j}. 
\end{align*}
The assumption that ${2R}_{\theta_1,\theta_1'} \cap {2R}_{\theta_2,\theta_2'} \neq \emptyset$ also implies that $|2c_{\theta_2} - 2c_{\theta_1}| \lesssim 2^{-k}$, whence
\begin{align*}
|c_{\theta_2}^- - c_{\theta_1}^-|  \leq \|\pi^-\|_{C^1}|c_{\theta_2}-c_{\theta_1}| \lesssim 2^{-k}.
\end{align*}
Since $c_{\theta_2}^+$ and $c_{\theta_2}^-$ are the centers of dyadic intervals of length $2^{-j}$ and $2^{-k}$, respectively, we have shown the following: If $\theta_1$ is fixed and $2R_{\theta_1,\theta_1'} \cap 2R_{\theta_2,\theta_2'} \neq \emptyset$, then $\theta_2$ must be one of $O(1)$ possible tiles.  Since any tile has at most $O(1)$ relatives, it then follows that the collection $\{2{R}_{\theta,\theta'}\}_{\theta \sim \theta' \in \Theta_{j,k}}$ has bounded overlap.

Property (3) follows from the dilated dyadic rectangles $C(I_j \times I_k)$ having bounded overlap.

Property (4) follows from the change of variables theorem and the fact that $|\det \nabla\Phi| \sim 1$.

Using property (3) in Lemma \ref{properties of maps}, one sees that $\ell_\xi^+ \cap U = \Phi(\{\zeta_1\} \times I)$, where $\xi \in U$ and $\zeta := \Phi^{-1}(\xi)$.  Hence, if $C > 0$, $\theta \in \Theta_{j,k}$, and $\xi \in C\theta$, then $\ell_\xi^+ \cap C\theta = \Phi((\{\zeta_1\} \times I) \cap C\Phi^{-1}(\theta))$.  The line segment $(\{\zeta_1\} \times I) \cap C\Phi^{-1}(\theta)$ has length at most $C2^{-k}$, and thus the bounds on $\nabla \Phi$ imply that $\ell_\xi^+ \cap C\theta$ has length $O(2^{-k})$.  A similar argument applies to the fibers $\ell_\xi^- \cap C\theta$, proving property (5).
\end{proof}

\subsection{Proof of Theorem \ref{linear est zero region}}
Having defined related tiles and shown that they behave like dyadic rectangles, we are ready to prove Theorem \ref{linear est zero region}.  We adapt the argument of the third author in \cite{BS neg},  with the fibers of $\pi^+$ and $\pi^-$ now playing the roles of vertical and horizontal fibers. For the remainder of this section, we will assume that $\tfrac{10}3 < q < 4$.  

The main step is to prove a restricted strong type inequality.  We state and prove the below lemmas for characteristic functions, but the proofs are unchanged if we replace $\one_\Omega$ with a measurable function $f_\Omega$ with $|f_\Omega| \sim \one_\Omega$.  

\begin{proposition}\label{const fiber}
Let $\Omega \subseteq U$ have constant $\pi^+$-fiber length $2^{-K}$ for some integer $K \geq 0$.  Then $\|\fE_0 \one_{\Omega'}\|_q \lesssim |\Omega|^{1-\frac{2}{q}}$ for every measurable set $\Omega' \subseteq \Omega$.
\end{proposition}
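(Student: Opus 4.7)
The plan is to adapt the bilinear-to-linear argument of the third author in \cite{BS neg} to the present doubly-ruled setting. Using the tile pair decomposition of Lemma \ref{tiles}(1), I would write
\begin{equation*}
|\fE_0 \one_{\Omega'}(x)|^2 = \sum_{j,k > n} T_{j,k}(x), \qquad T_{j,k}(x) := \sum_{\theta \sim \theta' \in \Theta_{j,k}} \fE_0 \one_{\Omega' \cap \theta}(x)\, \overline{\fE_0 \one_{\Omega' \cap \theta'}(x)},
\end{equation*}
and since $q > 10/3$ gives $q/2 > 1$, the triangle inequality reduces the proposition to bounding $\sum_{j,k > n} \|T_{j,k}\|_{q/2}$.

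The per-scale bound combines three ingredients: the bilinear estimate of Theorem \ref{T:bilinear tiles} applied pair-wise, the almost-orthogonality coming from the bounded overlap of the rectangles $\{R_{\theta,\theta'}\}$ in Lemma \ref{tiles}(2) (which controls the $\xi$-projected Fourier supports of the products $\fE_0 \one_{\Omega' \cap \theta}\,\overline{\fE_0 \one_{\Omega' \cap \theta'}}$), and the fiber-length bound $|\Omega \cap \theta| \lesssim 2^{-j}\min(2^{-k},2^{-K})$ obtained from Lemma \ref{tiles}(5) together with the hypothesis on $\Omega$. Using AM-GM on the pair sum and the identity $\sum_\theta |\Omega' \cap \theta| = |\Omega'| \leq |\Omega|$, one arrives at
\begin{equation*}
\|T_{j,k}\|_{q/2} \lesssim 2^{(j+k)(4/q - 1) - (j + \max(k,K))/2}\, |\Omega|^{1/2}.
\end{equation*}

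Summing over $(j,k) \in \{n+1, n+2, \ldots\}^2$ and splitting by $k \leq K$ vs.~$k > K$: the effective exponent in the dominant term evaluates to $(j+k)(4/q - 3/2)$, which is negative precisely when $q > 8/3$. For $q > 10/3$, both geometric series converge; the dominant contribution comes from the smallest valid scale, and using $|\Omega| \sim 2^{-n-K}$ one computes directly that the total is comparable to $|\Omega|^{2 - 4/q}$. Taking square roots yields $\|\fE_0 \one_{\Omega'}\|_q \lesssim |\Omega|^{1 - 2/q}$.

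The main obstacle is the almost-orthogonality step in the range $q \in (10/3, 4)$, where $L^{q/2}$ lies below $L^2$ and the standard Córdoba-Fefferman square function argument does not apply directly. Overcoming this requires either a decoupling-type input adapted to the sumset surfaces $\phi(\theta) + \phi(\theta')$ in 3D frequency space, or a careful Whitney-style decomposition of the pair sum that fully exploits the transversality of the two rulings; this is the delicate technical heart of the proof, with the threshold $q > 10/3$ emerging exactly from the balance between the bilinear gain $2^{(j+k)(4/q - 1)}$ and the fiber-induced decay factor $2^{-(j + \max(k,K))/2}$.
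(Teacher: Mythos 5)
Your overall outline—Whitney pair decomposition, bilinear input, almost orthogonality, geometric summation—matches the paper's, but there is a genuine gap in the fiber bound, and it makes the final summation fail in general. You bound $|\Omega\cap\theta|\lesssim 2^{-j}\min(2^{-k},2^{-K})$, which records only the width of the tile in the $\pi^+$-direction and the fiber length in the $\pi^-$-direction. What is missing is the size of the $\pi^+$-projection of $\Omega$ itself. The paper introduces $J$ with $|\pi^+(\Omega)|\sim 2^{-J}$ and uses the coarea formula to get $|\Omega\cap 10\theta|\lesssim\min\{2^{-J},2^{-j}\}\min\{2^{-K},2^{-k}\}$; then $|\Omega|\sim 2^{-J-K}$ and all four geometric series produce exactly $|\Omega|^{2-4/q}$. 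Your substitute $|\Omega|\sim 2^{-n-K}$ is simply not the hypothesis: $\Omega$ can have arbitrarily small $\pi^+$-projection (for example $\Omega$ could itself be a single tile in $\Theta_{J,K}$ with $J\gg n$), in which case $J\neq n$. Tracking your claimed per-scale bound $\|T_{j,k}\|_{q/2}\lesssim 2^{(j+k)(4/q-1)-(j+\max(k,K))/2}|\Omega|^{1/2}$ through the sum, the $j$-sum converges to a constant $\sim 2^{n(4/q-3/2)}$ that has no dependence on $J$, and the final bound is $2^{(n-J)(4/q-3/2)}$ times the target $|\Omega|^{2-4/q}$; since $4/q-3/2<0$ and $J$ can be arbitrarily larger than $n$, this ratio is unbounded, so the estimate does not close.

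Two further points. First, the almost-orthogonality step in the regime $q/2<2$ is not an open technical obstacle needing decoupling: it is resolved directly by the Tao--Vargas--Vega orthogonality lemma (\cite[Lemma 6.1]{TVV}) applied with the bounded-overlap property of the sumset rectangles $\{2R_{\theta,\theta'}\}$ from Lemma~\ref{tiles}(2), giving $\|\sum_{\theta\sim\theta'}\cdots\|_{q/2}\lesssim\bigl(\sum_{\theta\sim\theta'}\|\cdots\|_{q/2}^{q/2}\bigr)^{2/q}$. Second, the threshold $q>10/3$ enters only through the bilinear input, Theorem~\ref{T:bilinear tiles}; the summation in this proposition closes for any $q>3$ once the correct fiber bound with $J$ is used, so the ``balance'' you describe between the bilinear gain and the fiber decay is not the source of the exponent.
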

\begin{proof} We essentially follow Vargas's argument in \cite{Vargas neg}, but replace dyadic rectangles $I_j \times I_k$ with tiles $\theta$.  Fix a measurable set $\Omega' \subseteq \Omega$.  Using property (1) of Lemma \ref{tiles}, the triangle inequality, almost orthogonality (combining \cite[Lemma 6.1]{TVV} and property (2) of Lemma \ref{tiles}), and finally Theorem \ref{T:bilinear tiles}, we have
\begin{align*}
    \|\fE_0\one_{\Omega'}\|_q^2 &=  \bigg\|\sum_{j,k>n}\sum_{\substack{\theta,\theta' \in \Theta_{j,k}:\\\theta\sim\theta'}}\fE_0(\one_{\Omega'\cap\theta})\fE_0(\one_{\Omega'\cap\theta'})\bigg\|_{q/2}\\
    &\lesssim \sum_{j,k>n}\bigg(\sum_{\substack{\theta,\theta' \in \Theta_{j,k}:\\\theta\sim\theta'}}\|\fE_0(\one_{\Omega' \cap \theta})\fE_0(\one_{\Omega' \cap \theta'})\|_{q/2}^{q/2}\bigg)^\frac{2}{q}\\
    &\lesssim \sum_{j,k>n}2^{(j+k)(\frac{4}{q}-1)}\bigg(\sum_{\substack{\theta,\theta' \in \Theta_{j,k}:\\\theta\sim\theta'}}|\Omega' \cap \theta|^\frac{q}{4}|\Omega' \cap \theta'|^\frac{q}{4}\bigg)^\frac{q}{2}.
\end{align*}
Since $10\theta \supseteq \theta'$ whenever $\theta$ and $\theta'$ are related, each tile has a bounded number of relatives, and the dilates $10\theta$ have bounded overlap, it follows that
\begin{align}\label{first est}
    \|\fE_0\one_{\Omega'}\|_q^2 \lesssim \sum_{j,k>n}2^{(j+k)(\frac{4}{q}-1)}\bigg(\sum_{\theta \in \Theta_{j,k}}|\Omega \cap 10\theta|^\frac{q}{2}\bigg)^\frac{2}{q} \lesssim \sum_{j,k > n}2^{(j+k)(\frac{4}{q}-1)}|\Omega|^\frac{2}{q}\max_{\theta \in \Theta_{j,k}}|\Omega \cap 10\theta|^{1-\frac{2}{q}}.
\end{align}
Let $J$ be an integer such that $|\pi^+(\Omega)| \sim 2^{-J}$.  By the coarea formula, the hypothesis on $\Omega$, and property (5) in Lemma \ref{tiles}, we have $|\Omega| \sim 2^{-J-K}$ and
\begin{align*}
|\Omega \cap 10\theta| &\lesssim |\pi^+(\Omega \cap 10\theta)|\sup_{\xi \in \Omega \cap 10\theta}\fH^1(\ell_\xi^+ \cap \Omega \cap 10\theta)\\
&\lesssim \min\{2^{-J},2^{-j}\}\min\{2^{-K},2^{-k}\},
\end{align*}
for every $\theta \in \Theta_{j,k}$.  Inserting this bound into \eqref{first est} and summing the resulting (four) geometric series produces the required estimate.
\end{proof}

\begin{proposition}\label{decomposition}
Let $\Omega \subseteq U$ have constant $\pi^+$-fiber length $2^{-K}$ for some integer $K \geq 0$, let $J$ be an integer such that $|\Omega| \sim 2^{-J-K}$, and let $\varepsilon$ be the smallest dyadic number such that $\|\fE_0\one_{\Omega'}\|_q \leq \varepsilon^2 |\Omega|^{1-\frac{2}{q}}$ for all measurable sets $\Omega' \subseteq \Omega$.  Up to a set of measure zero, there exists a decomposition
\begin{align*}
\Omega = \bigcup_{0 < \delta \lesssim \varepsilon^{1/4}} \Omega_\delta,
\end{align*}
where the union is taken over dyadic numbers, such that the following properties hold:
\begin{itemize}
\item[(1)]{$\|\fE_0\one_{\Omega'}\|_q \lesssim \delta|\Omega|^{1-2/q}$ for every measurable set $\Omega' \subseteq \Omega_\delta$, and}
\item[(2)]{$\Omega_\delta \subseteq \bigcup_{\theta \in \Theta_\delta}\theta$, where $\Theta_\delta \subseteq \Theta_{J,K}$ with $\#\Theta_\delta \lesssim \delta^{-C_0}$ for some constant $C_0$.}
\end{itemize}
\end{proposition}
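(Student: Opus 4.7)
The plan is to carry out a density-based pigeonhole decomposition of $\Omega$ into pieces supported on tiles at the critical scale $\Theta_{J,K}$, in the spirit of \cite{Vargas neg} and \cite{BS neg}.

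For each dyadic $\mu \in (0,1]$, I would set
\[
\Theta^\mu := \bigl\{\theta \in \Theta_{J,K} : |\Omega\cap\theta|/|\theta| \in [\mu,2\mu)\bigr\}, \qquad \Omega_{(\mu)} := \Omega \cap \bigcup_{\theta \in \Theta^\mu}\theta.
\]
Since the tiles of $\Theta_{J,K}$ partition $U$ modulo null sets, the family $\{\Omega_{(\mu)}\}_\mu$ partitions $\Omega$ modulo null sets, and the inequality $\sum_{\theta \in \Theta^\mu}|\Omega\cap\theta|\le|\Omega|\sim|\theta|$ yields $\#\Theta^\mu\lesssim\mu^{-1}$.

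For a measurable subset $\Omega'\subseteq\Omega_{(\mu)}$, I would rerun the bilinear argument from the proof of Proposition \ref{const fiber} to obtain
\[
\|\fE_0 \one_{\Omega'}\|_q^2 \lesssim \sum_{j,k>n} 2^{(j+k)(4/q-1)}\,|\Omega'|^{2/q}\bigl(\max_{\theta\in\Theta_{j,k}}|\Omega_{(\mu)}\cap 10\theta|\bigr)^{1-2/q}.
\]
The new input is a density-refined bound on the maximum. At scales $(j,k)$ finer than $(J,K)$, a single $10\theta$ meets only $O(1)$ tiles of $\Theta_{J,K}$, yielding $|\Omega_{(\mu)}\cap 10\theta|\lesssim \min\{2^{-j-k},\mu 2^{-J-K}\}$; at scales coarser than $(J,K)$, the number of $\Theta^\mu$-tiles contained in $10\theta$ is bounded both by the volume count $2^{J+K-j-k}$ and by $\#\Theta^\mu\lesssim\mu^{-1}$, giving $|\Omega_{(\mu)}\cap 10\theta|\lesssim \min\{\mu 2^{-j-k},2^{-J-K}\}$; the two mixed cases are treated analogously using the $\pi^+$-fiber structure of $\Omega$. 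Inserting these bounds in place of those used in Proposition \ref{const fiber} and summing the four resulting geometric series in $(j,k)$ preserves the dominant behavior at $(j,k)\sim(J,K)$ but scales it by a positive power of $\mu$; provided $q<4$, this leads to $\|\fE_0\one_{\Omega'}\|_q\lesssim \mu^\gamma|\Omega|^{1-2/q}$ for some $\gamma=\gamma(q)>0$.

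To conclude, I would set $\delta:=C\mu^\gamma$ for a suitable constant $C$, together with $\Omega_\delta:=\Omega_{(\mu)}$ and $\Theta_\delta:=\Theta^\mu$: property (1) is the bound just derived, while $\#\Theta^\mu\lesssim\mu^{-1}\lesssim\delta^{-1/\gamma}$ gives property (2) with $C_0:=1/\gamma$. To respect $\delta\lesssim\varepsilon^{1/4}$, I would lump every $\mu$ with $\mu^\gamma\gtrsim\varepsilon^{1/4}$ into a single piece assigned to $\delta\sim\varepsilon^{1/4}$: on this lumped piece property (1) is immediate from the hypothesis $\|\fE_0\one_{\Omega'}\|_q\le\varepsilon^2|\Omega|^{1-2/q}$ combined with $\varepsilon^2\le\varepsilon^{1/4}$ (as $\varepsilon\lesssim 1$), and property (2) holds because $\sum_{\mu^\gamma\gtrsim\varepsilon^{1/4}}\#\Theta^\mu\lesssim\varepsilon^{-1/(4\gamma)}\sim(\varepsilon^{1/4})^{-C_0}$. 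The main obstacle is the case analysis in the previous paragraph: verifying that, after summing the four geometric series in $(j,k)$ against the scale-dependent density cap, a strictly positive power of $\mu$ actually survives. This is precisely where the restriction $q<4$ enters, since the relevant exponents vanish at $q=4$.
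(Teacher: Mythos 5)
Your proposal takes a genuinely different and more streamlined route than the paper. The paper's proof is an iterated three-step pigeonhole: first it sorts the one-dimensional projection $S=\pi^+(\Omega)$ by local density in dyadic intervals via a Lebesgue-differentiation/maximal-interval argument (Lemma~\ref{first step}), then it sorts by $\pi^-$-fiber length (Lemma~\ref{second step}), and finally it repeats the density sort for $\pi^-$ (Lemma~\ref{third step}); the final $\Omega_\delta$ is a union over $O((\log\delta^{-1})^2)$ intermediate pieces, and this log factor is what forces the gain to degrade from $\delta^2$ in the sublemmas to $\delta$ in the statement. Your approach instead pigeonholes in one shot by the tile-density $|\Omega\cap\theta|/|\theta|$ at the single critical scale $\Theta_{J,K}$, which immediately gives the cardinality bound $\#\Theta^\mu\lesssim\mu^{-1}$ (hence property (2) with $C_0=1/\gamma$), and it avoids the Lebesgue-differentiation machinery and the log loss altogether. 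I checked the step you flag as the main obstacle: with the scale-dependent cap $|\Omega_{(\mu)}\cap 10\theta|\lesssim\min\{2^{-j-k},\mu 2^{-J-K}\}$ on the fine side, $\min\{\mu 2^{-j-k},2^{-J-K}\}$ on the coarse side, and $\min\{2^{-j-K},\mu2^{-J-k}\}$ (resp.\ its transpose) in the two mixed quadrants — the mixed-case bounds being exactly where the hypothesis of constant $\pi^+$-fiber length enters — the four geometric series in \eqref{first est'} sum to $\lesssim\mu^{\min\{4/q-1,\,2-6/q\}}2^{(J+K)(6/q-2)}$, a strictly positive power of $\mu$ precisely because $3<q<4$, so property (1) follows with $\delta\sim\mu^{\gamma}$ for $\gamma=\tfrac12\min\{4/q-1,\,2-6/q\}$ (note the factor $\tfrac12$ from taking a square root, which slightly corrects your $\delta:=C\mu^\gamma$). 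The lumping of large-$\mu$ pieces into $\delta\sim\varepsilon^{1/4}$ is also sound. You should write out the two mixed-quadrant bounds explicitly rather than asserting they are "analogous," since they use the $\pi^+$-fiber structure in an asymmetric way and are where the argument could most plausibly have failed; but having checked them, the proposal is correct and gives a cleaner proof of the same decomposition.
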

\begin{proof}
The construction of the sets $\Omega_\delta$ proceeds in three steps.

\emph{Step 1.} Let $S := \pi^+(\Omega)$.  By the coarea formula, $|S| \sim 2^{-J}$.  Let $\xi_1$ be a Lebesgue point of $S$ and $0 < \eta \leq \varepsilon$ a dyadic number.  Define $I_\eta(\xi_1)$ to be the maximal dyadic interval $I$ such that $\xi_1 \in I$ and
\begin{align}\label{density}
\frac{|I \cap S|}{|I|} \geq \eta^C,
\end{align}
where $C$ is a constant (to be chosen); such an interval exists by the Lebesgue differentiation theorem.  Since we may exclude a set of measure zero in our decomposition, we assume without loss of generality that $S$ is equal to its set of Lebesgue points.  We note that $|I_\eta(\xi_1)| \lesssim \eta^{-C}2^{-J}$.  Let
\begin{align*}
T_\eta := \{\xi_1 \in S : |I_\eta(\xi_1)| \geq \eta^C2^{-J}\},
\end{align*}
and let $S_\varepsilon := T_\varepsilon$ and $S_\eta := T_\eta \setminus T_{2\eta}$ for $\eta < \varepsilon$.  Then every point of $S$ is contained in a unique $S_\eta$.  We set $\Omega_\eta^1 := \Omega \cap (\pi^+)^{-1}(S_\eta)$.

\begin{lemma}\label{first step}
For every $0 < \eta \leq \varepsilon$, the set $\Omega_\eta^1$ is contained in a union of $O(\eta^{-3C})$ tiles in $\Theta_{J,n}$, and for every measurable set $\Omega' \subseteq \Omega_\eta^1$, we have
\begin{align*}
\|\fE_0\one_{\Omega'}\|_q \lesssim \eta^2|\Omega|^{1-\frac{2}{q}}.
\end{align*}
\end{lemma}
\begin{proof}
By its definition, $S_\eta$ is covered by dyadic intervals $I$ of length $|I| \gtrsim \eta^C|S|$, in each of which $S$ has density obeying \eqref{density}.  The density of each such $I$ in $S$ is
\begin{align*}
\frac{|I \cap S|}{|S|} = \frac{|I \cap S|}{|I|}\cdot \frac{|I|}{|S|} \gtrsim \eta^{2C}.
\end{align*}
Thus, a minimal-cardinality covering of $S_\eta$ by these $I$ (which are necessarily pairwise disjoint) has size $O(\eta^{-2C})$.  Additionally, each $I$ satisfies $|I| \lesssim \eta^{-C}2^{-J}$, and thus $S_\eta$ is covered by $O(\eta^{-3C})$ intervals in $\fI_J$.  Consequently,  $\Omega_\eta^1$ is contained in a union of $O(\eta^{-3C})$ tiles in $\Theta_{J,n}$.

We turn to the extension estimate, fixing a measurable set $\Omega' \subseteq \Omega$.  By the definition of $\varepsilon$, we may assume that $\eta < \varepsilon$.  By the same argument that yields \eqref{first est}, we have
\begin{align}\label{first est'}
\|\fE_0\one_{\Omega'}\|_q^2 \lesssim \sum_{j,k > n}2^{(j+k)(\frac{4}{q}-1)}|\Omega|^\frac{2}{q}\max_{\theta \in \Theta_{j,k}}|\Omega' \cap 10\theta|^{1-\frac{2}{q}},
\end{align}
and the coarea formula implies that
\begin{align*}
|\Omega' \cap 10\theta| \lesssim \min\{2^{-J},2^{-j}\}\min\{2^{-K},2^{-k}\},
\end{align*}
for every $\theta \in \Theta_{j,k}$.  If $|j-J| < \frac{C}{4}\log \eta^{-1}$, then the definition of $\Omega_\eta^1$ leads to the stronger estimate
\begin{align}\label{strong est}
|\Omega' \cap 10\theta| \lesssim \eta^\frac{3C}{4}\min\{2^{-J},2^{-j}\}\min\{2^{-K},2^{-k}\}.
\end{align}
Indeed, fix such a $j$.  It suffices to prove \eqref{strong est} with some $\tilde{\theta} \in \Theta_{j-4,k-4}$ in place of $\theta$, since each $\theta$ is contained in a union of four such tiles.  Let $\tilde{\theta} =: \Phi(I_{j-4} \times I_{k-4})$, so that $\pi^+(\tilde{\theta}) = I_{j-4} \in \fI_{j-4}$.  We have
\begin{align*}
|I_{j-4}| \geq 16\eta^\frac{C}{4}2^{-J} \geq (2\eta)^C 2^{-J}
\end{align*}
for $\eta$ sufficiently small (which we may assume).  Suppose that $I_{j-4} \cap S_\eta \neq \emptyset$.  Then there exists $\xi_1 \in I_{j-4}$ such that $\xi_1 \notin T_{2\eta}$, whence
\begin{align*}
|I_{2\eta}(\xi_1)| < (2\eta)^C 2^{-J} \leq |I_{j-4}|.
\end{align*}
Consequently, by the maximality of $I_{2\eta}(\xi_1)$ and the fact that $2^{-j} \leq \eta^{-\frac{C}{4}}2^{-J}$, we have
\begin{align*}
|I_{j-4} \cap S_\eta| \leq |I_{j-4} \cap S| \leq (2\eta)^C|I_{j-4}| = 16(2\eta)^C2^{-j} \lesssim \eta^\frac{3C}{4}\min\{2^{-J},2^{-j}\}.
\end{align*}
Thus, by the coarea formula,
\begin{align*}
|\Omega' \cap \tilde{\theta}| \lesssim |I_{j-4} \cap S_\eta|\min\{2^{-K},2^{-k}\} \lesssim \eta^\frac{3C}{4}\min\{2^{-J},2^{-j}\}\min\{2^{-K},2^{-k}\},
\end{align*}
as claimed.  Inserting this bound into \eqref{first est'} and summing the resulting (eight) geometric series leads to the estimate $\|\fE_0\one_{\Omega'}\|_q \lesssim \eta^{C'}|\Omega|^{1-2/q}$, where $C'$ is a constant determined by $C$.  We can choose $C$ so that $C' = 2$.
\end{proof}

\emph{Step 2.} For dyadic $0 < \eta \leq \varepsilon$ and $0 < \rho \lesssim \eta^{1/4}$, define
\begin{align*}
\Omega_{\eta,\rho}^2 := \{\xi \in \Omega_\eta^1 : \rho^{4D}\eta^{-3C-D}2^{-J} \leq \fH^1(\ell_\xi^- \cap \Omega_\eta^1) < (2\rho)^{4D}\eta^{-3C-D}2^{-J}\},
\end{align*}
where $D$ is a constant to be chosen.  Lemma \ref{first step} and the near-orthogonality of $\ell_\xi^+$ and $\ell_{\xi'}^-$ imply that $\fH^1(\ell_\xi^- \cap \Omega_\eta^1) \lesssim \eta^{-3C}2^{-J}$ for every $\xi \in \Omega_\eta^1$.  Thus, each $\xi \in \Omega_\eta^1$ belongs to a unique $\Omega_{\eta,\rho}^2$.

\begin{lemma}\label{second step}
For every $0 < \eta \leq \varepsilon$ and $0 < \rho \lesssim \eta^{1/4}$, we have $\|\fE_0\one_{\Omega'}\|_q \lesssim \rho^2|\Omega|^{1-2/q}$ for every measurable set $\Omega' \subseteq \Omega_{\eta,\rho}^2$.
\end{lemma}
\begin{proof}
If $\rho^{4D}\eta^{-3C-D} \geq \rho^{2D}$, then by Lemma \ref{first step}, we have
\begin{align*}
\|\fE_0\one_{\Omega'}\|_q \lesssim \eta^2|\Omega|^{1-\frac{2}{q}} \leq \rho^\frac{4D}{3C+D}|\Omega|^{1-\frac{2}{q}} \lesssim \rho^2|\Omega|^{1-\frac{2}{q}}
\end{align*}
for $D$ chosen sufficiently large.  Thus, we may assume that $\rho^{4D}\eta^{-3C-D} \leq \rho^{2D}$.  Given $\theta \in \Theta_{j,k}$, the set $\Omega' \cap 10\theta$ has $\pi^+$- and $\pi^-$-fibers of length at most $\min\{2^{-K},2^{-k}\}$ and $\min\{\rho^{2D}2^{-J},2^{-j}\}$, respectively, and the images of $\Omega' \cap 10\theta$ under $\pi^+$ and $\pi^-$ have measure at most $\min\{2^{-J},2^{-j}\}$ and $2^{-k}$, respectively.  Thus, by the coarea formula,
\begin{align}\label{fubini}
|\Omega' \cap 10\theta| \lesssim \min\{2^{-J-K},2^{-j-K},2^{-j-k},\rho^{2D}2^{-J-k}\}.
\end{align}
We define
\begin{align*}
R_1 &:= \{(j,k) : J- D\log\rho^{-1} \geq j,~K \geq k\} \cup \{(j,k) : J \geq j,~K-D\log\rho^{-1} \geq k\},\\
R_2 &:= \{(j,k) : j \geq J+D\log\rho^{-1},~K \geq k\} \cup \{(j,k) : j \geq J,~K-D\log\rho^{-1} \geq k\},\\
R_3 &:= \{(j,k) : j \geq J+D\log\rho^{-1},~k \geq K\} \cup \{(j,k) : j \geq J,~k \geq K+D\log\rho^{-1}\},\\
R_4 &:= \{(j,k) : J + D\log\rho^{-1} \geq j,~k+D\log\rho^{-1} \geq K\}.
\end{align*}
Each $(j,k)$ belongs to some $R_i$, so by \eqref{first est'} and \eqref{fubini}, we have
\begin{align*}
\|\fE_0\one_{\Omega'}\|_{q}^2 \lesssim \sum_{(j,k) \in R_1}2^{(j+k)(\frac{4}{q}-1)}2^{-(J+K)(1-\frac{2}{q})}|\Omega|^\frac{2}{q} + \sum_{(j,k) \in R_2}2^{(j+k)(\frac{4}{q}-1)}2^{-(j+K)(1-\frac{2}{q})}|\Omega|^\frac{2}{q}\\
+ \sum_{(j,k) \in R_3}2^{(j+k)(\frac{4}{q}-1)}2^{-(j+k)(1-\frac{2}{q})}|\Omega|^\frac{2}{q} + \sum_{(j,k) \in R_4}2^{(j+k)(\frac{4}{q}-1)}\rho^{2D(1-\frac{2}{q})}2^{-(J+k)(1-\frac{2}{q})}|\Omega|^\frac{2}{q}.
\end{align*}
Summing these geometric series leads to the bound $\|\fE_0\one_{\Omega'}\|_{q} \lesssim \rho^{D'}|\Omega|^{1-2/q}$, where $D'$ is a constant determined by $D$; increasing $D$ if necessary, we can make $D' \geq 2$.
\end{proof}

\emph{Step 3.} The final step of our decomposition is the same as the first, but with $\pi^-$ in place of $\pi^+$.  Indeed, each $\Omega_{\eta,\rho}^2$ has $\pi^-$-fibers of (essentially) constant length $\rho^{4D}\eta^{-3C-D}2^{-J}$.  For $0< \eta \leq \varepsilon$ and $0 < \rho \lesssim \eta^{1/4}$, let $S_{\eta,\rho} := \pi^-(\Omega_{\eta,\rho}^2)$.  Let $K_{\eta,\rho}$ be an integer such that $|S_{\eta,\rho}| \sim 2^{-K_{\eta,\rho}}$.  Let $\xi_1$ be a Lebesgue point of $S_{\eta,\rho}$ and $0 < \delta \leq \rho$ a dyadic number.  Define $I_{\eta,\rho,\delta}(\xi_1)$ to be the maximal dyadic interval $I$ such that $\xi_1 \in I$ and $|I \cap S_{\eta,\rho}|\geq \delta^C|I|$; as before, the Lebesgue differentiation theorem guarantees such an interval exists.  Let $T_{\eta,\rho,\delta} := \{\xi_1 \in S : |I_{\eta,\rho,\delta}(\xi_1)| \geq \delta^C2^{-K_{\eta,\rho}}\}$, and set $S_{\eta,\rho,\rho} := T_{\eta,\rho,\rho}$ and $S_{\eta,\rho,\delta} := T_{\eta,\rho,\delta} \setminus T_{\eta,\rho,2\delta}$ for $\delta < \rho$.  Finally, we let $\Omega_{\eta,\rho,\delta}^3 := \Omega_{\eta,\rho}^2 \cap (\pi^-)^{-1}(S_{\eta,\rho,\delta})$.

\begin{lemma}\label{third step}
For every $0 < \eta \leq \varepsilon$ and $0 < \delta \leq \rho \lesssim \eta^{1/4}$, the set $\Omega_{\eta,\rho,\delta}^3$ is contained in a union of $O(\delta^{-15C-4D})$ tiles in $\Theta_{J,K}$, and for every measurable set $\Omega' \subseteq \Omega_{\eta,\rho,\delta}^3$, we have
\begin{align*}
\|\fE_0\one_{\Omega'}\|_q \lesssim \delta^2|\Omega|^{1-\frac{2}{q}}.
\end{align*}
\end{lemma}
\begin{proof}
By an argument similar to the proof of Lemma \ref{first step}, one can show that
\begin{align*}
\|\fE_0 \one_{\Omega'}\|_q \lesssim \delta^2|\Omega_{\eta,\rho}^2|^{1-2/q} \leq \delta^2|\Omega|^{1-2/q}.
\end{align*}
Likewise, one sees that $\Omega_{\eta,\rho,\delta}$ is contained in a union of $O(\delta^{-3C})$ tiles in $\Theta_{n,K_{\eta,\rho}}$.  Since $\Omega_{\eta,\rho}^2$ has $\pi^-$-fibers of length at least $\rho^{4D}\eta^{-3C-D}2^{-J}$ and volume at most $2^{-J-K}$, we must have $2^{-K_{\eta,\rho}} \lesssim \rho^{-4D}2^{-K}$.  Thus, $\Omega_{\eta,\rho,\delta}^3$ is contained in $O(\delta^{-3C-4D})$ tiles in $\Theta_{n,K}$. By Lemma \ref{first step} and the fact that $\rho \lesssim \eta^{1/4}$, we also know that $\Omega_{\eta,\rho,\delta}^3$ is contained in $O(\delta^{-12C})$ tiles in $\Theta_{J,n}$.  The intersection of a tile in $\Theta_{J,n}$ and a tile in $\Theta_{n,K}$ is a tile in $\Theta_{J,K}$.
\end{proof}

We are now ready to complete the proof of Proposition \ref{decomposition}.  We set
\begin{align*}
\Omega_\delta := \bigcup_{\delta \leq \rho \lesssim \varepsilon^{1/4}}\bigcup_{\rho^4 \lesssim \eta \leq \varepsilon}\Omega_{\eta,\rho,\delta}^3,
\end{align*}
so that $\Omega = \bigcup_{0 < \delta \lesssim \varepsilon^{1/4}}\Omega_\delta$.  Since for fixed $\delta$ there are $O((\log\delta^{-1})^2)$ sets $\Omega_{\eta,\rho,\delta}^3$, properties (1) and (2) in the proposition follow from Lemma \ref{third step}.
\end{proof}

Now, fix some $\Omega \subseteq U$, and for each $K$, let $J(K)$ be an integer such that $|\Omega(K)^+| \sim 2^{-J(K)-K}$.  For each dyadic number $\varepsilon$, let $\fK(\varepsilon)$ denote the collection of all integers $K \geq 0$ for which $\varepsilon$ is the smallest dyadic number satisfying $\|\fE_0\one_{\Omega'}\|_q \lesssim \varepsilon^2|\Omega(K)^+|^{1-2/q}$ for every measurable set $\Omega' \subseteq \Omega(K)^+$.  For each $K \in \fK(\varepsilon)$, Proposition \ref{decomposition} produces a decomposition $\Omega(K)^+ = \bigcup_{0 < \delta \lesssim \varepsilon^{1/4}}\Omega(K)_\delta^+$ such that for each $\delta$, we have $\Omega(K)_\delta^+ \subseteq \bigcup_{\theta \in \Theta(K)_\delta}\theta$ for some $\Theta(K)_\delta \subseteq \Theta_{J(K),K}$ with $\#\Theta(K)_\delta \lesssim \delta^{-C_0}$.

\begin{lemma}\label{lem6}
For every $0 < \delta \lesssim \varepsilon^{1/4}$, we have
\begin{align*}
\bigg\|\sum_{K \in \fK(\varepsilon)}\fE_0\one_{\Omega(K)_\delta^+}\bigg\|_q^q \lesssim (\log \delta^{-1})^q\sum_{K \in \fK(\varepsilon)}\|\fE_0\one_{\Omega(K)_\delta^+}\|_q^q + \delta|\Omega|^{q-2}.
\end{align*}
\end{lemma}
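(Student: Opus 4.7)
My plan is to combine the tile decomposition from Proposition~\ref{decomposition}(2) with standard wave-packet localization for extensions from small tiles, and then prove a pointwise almost-orthogonality bound.  For each $K \in \fK(\varepsilon)$, property~(2) yields $\Omega(K)_\delta^+ \subseteq \bigcup_{\theta \in \Theta(K)_\delta}\theta$ with $\#\Theta(K)_\delta \lesssim \delta^{-C_0}$ and each $\theta \in \Theta_{J(K),K}$.  Standard stationary phase/Knapp considerations show that each $\fE_0\one_\theta$ is essentially concentrated on a dual parallelepiped $R_\theta^*\subset\R^3$ with dimensions determined by $(J(K),K)$ and orientation aligned with the rulings at the center of $\theta$, with rapid polynomial decay away from $R_\theta^*$.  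I set $A_K := \bigcup_{\theta\in\Theta(K)_\delta} C\,R_\theta^*$ for a large constant $C$ and split $F_K := \fE_0\one_{\Omega(K)_\delta^+} = F_K\one_{A_K} + F_K\one_{A_K^c}$.

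\textbf{Execution.} The tail piece $F_K\one_{A_K^c}$ enjoys rapid decay: by choosing $C$ sufficiently large, $\|F_K\one_{A_K^c}\|_q$ is bounded by an arbitrarily large power of $\delta$ times a polynomial factor involving $|\Omega|^{-1}$.  Since $\#\fK(\varepsilon) \lesssim \log|\Omega|^{-1}$, the total tail contribution to $\|\sum_K F_K\|_q^q$ is bounded by $\delta|\Omega|^{q-2}$ upon choosing the decay rate sufficiently large, giving the second term on the right-hand side.  For the main part $\sum_K F_K\one_{A_K}$, the crux is a pointwise overlap bound: at each $x \in \R^3$, the number of $K \in \fK(\varepsilon)$ with $x \in A_K$ is $O(\log\delta^{-1})$.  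This relies on the $(J(K),K)$-dependent scale of each $R_\theta^*$ together with a pigeonhole in the dyadic parameters $J(K), K$, each of which takes only $O(\log\delta^{-1})$ distinct values that can produce a wave packet passing through a fixed point once the rapid decay is accounted for.  Given this overlap count, H\"older yields the pointwise inequality
$$\bigg|\sum_K F_K(x)\one_{A_K}(x)\bigg|^q \lesssim (\log\delta^{-1})^{q-1}\sum_K |F_K(x)|^q\one_{A_K}(x),$$
and integration over $x$ delivers the first term, absorbing one additional logarithmic factor from the dyadic pigeonhole to reach $(\log\delta^{-1})^q$.

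\textbf{Main obstacle.} The pointwise overlap count at each $x$ is the most delicate step; it requires a careful analysis of the geometry of the dual parallelepipeds $R_\theta^*$ across different scales $(J(K),K)$.  The doubly-ruled structure of $\Sigma$ should make this tractable: the natural tile coordinates are adapted to the two rulings, so the wave packets have a clean tube-like description in these coordinates, and the fact that the two ruling families meet transversally (see \eqref{ortho lines}) should allow the counting to proceed in each ruling direction essentially independently.
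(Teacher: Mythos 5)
Your proposal takes a fundamentally different route from the paper, and unfortunately the key step does not work.

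The paper's proof is a Fourier-side almost-orthogonality argument: it partitions $\fK(\varepsilon)$ into $O(\log\delta^{-1})$ subcollections $\fK$ that are $A\log\delta^{-1}$-separated, expands $\|\sum_K F_K\|_q^q$ as a four-fold product (exploiting $q<4$), splits off the diagonal, and then controls the off-diagonal via a bilinear inequality
\[
\|\fE_0(\one_{\Omega(K)_\delta^+})\fE_0(\one_{\Omega(K')_\delta^+})\|_{q/2}\lesssim 2^{-c_0|K-K'|}\max\{|\Omega(K)^+|,|\Omega(K')^+|\}^{2-\frac4q},
\]
which is proved by further tile decomposition, Cauchy--Schwarz, Proposition~\ref{const fiber}, and Theorem~\ref{T:bilinear tiles}. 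No wave packets or physical-space localization appear.

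Your proposed pointwise overlap bound is the central gap, and it is false as stated. For a tile $\theta\in\Theta_{J,K}$ the function $\fE_0\one_\theta$ is, up to modulation, concentrated on a parallelepiped $R_\theta^*$ of dimensions roughly $2^J\times 2^K\times 2^{J+K}$, but this parallelepiped is centered at the \emph{origin}: the extension $\fE_0\one_\theta(t,x)=\int_\theta e^{i(t,x)\cdot(\phi(\xi),\xi)}\,d\xi$ carries a modulation coming from the center of $\theta$, which changes the phase but not the magnitude, so $|\fE_0\one_\theta|$ is not translated away from the origin. Hence for every $K\in\fK(\varepsilon)$ and every $\theta\in\Theta(K)_\delta$ the dual parallelepiped $R_\theta^*$ (and its enlargement $CR_\theta^*$) contains the origin, so $A_K\ni 0$ for \emph{all} $K$, and at $x=0$ the overlap count equals $\#\fK(\varepsilon)$, which can be comparable to $\log|\Omega|^{-1}$, with no control by $\log\delta^{-1}$. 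The rapid decay you invoke helps only away from $R_\theta^*$, which is exactly where the overlap problem does not occur. The H\"older step
$|\sum_K F_K\one_{A_K}|^q\lesssim(\log\delta^{-1})^{q-1}\sum_K |F_K|^q\one_{A_K}$
therefore fails near the origin, and the scheme collapses at its crux. To recover a workable argument along physical-space lines one would need a genuine wave packet decomposition of each $\one_\theta$ (with translated packets) together with a careful accounting of where packets from different scales concentrate; the paper sidesteps this entirely by working with $L^{q/2}$ bilinear norms and the exponential decay in $|K-K'|$ coming from the constant-fiber-length structure.
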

\begin{proof}
Let $A$ be a constant to be chosen later, and divide $\fK(\varepsilon)$ into $O(\log \delta^{-1})$ subsets $\fK$ such that each is $A\log\delta^{-1}$-separated.  It suffices to prove that
\begin{align*}
\bigg\|\sum_{K \in \fK}\fE_0\one_{\Omega(K)_\delta^+}\bigg\|_q^q \lesssim \sum_{K \in \fK}\|\fE_0\one_{\Omega(K)_\delta^+}\|_q^q + \delta^2|\Omega|^{q-2}
\end{align*}
for each $\fK$.  We recall that $q < 4$. Thus,
\begin{align}\label{expansion}
\notag\bigg\|\sum_{K \in \fK}\fE_0\one_{\Omega(K)_\delta^+}\bigg\|_q^q &= \int\bigg\vert\sum_{\bK \in \fK^4}\prod_{i=1}^4 \fE_0\one_{\Omega(K_i)_\delta^+}\bigg\vert^\frac{q}{4}\\
&\lesssim \sum_{K \in \fK}\|\fE_0\one_{\Omega(K)_\delta^+}\|_q^q + \sum_{\bK \in \fK^4\setminus D(\fK^4)}\bigg\|\prod_{i=1}^4\fE_0\one_{\Omega(K_i)_\delta^+}\bigg\|_\frac{q}{4}^\frac{q}{4},
\end{align}
where $D(\fK^4) := \{\bK \in \fK^4 : K_1 = K_2 = K_3 = K_4\}$.  To control the latter sum, we have the following lemma.

\begin{lemma}
For all $K,K' \in \fK$, we have
\begin{align*}
\|\fE_0(\one_{\Omega(K)_\delta^+})\fE_0(\one_{\Omega(K')_\delta^+})\|_{q/2} \lesssim 2^{-c_0|K-K'|}\max\{|\Omega(K)^+|,|\Omega(K')^+|\}^{2-\frac{4}{q}}
\end{align*}
for some constant $c_0>0$.
\end{lemma}
\begin{proof}
Set $\tilde{\Omega} := \Omega(K)_\delta^+$, $\tilde{\Omega}' := \Omega(K')_\delta^+$, $J := J(K)$, and $J' := J(K')$.  By the Cauchy--Schwarz inequality and Proposition \ref{const fiber}, we have
\begin{align*}
\|\fE_0(\one_{\tilde{\Omega}})\fE_0(\one_{\tilde{\Omega}'})\|_{q/2} \lesssim |\tilde{\Omega}|^{1-\frac{2}{q}}|\tilde{\Omega}'|^{1-\frac{2}{q}}.
\end{align*}
If either (i) $K = K'$, (ii) $J = J'$, (iii) $J < J'$ and $K < K'$, or (iv) $J > J'$ and $K > K'$, then
\begin{align*}
|\tilde{\Omega}|^{1-\frac{2}{q}}|\tilde{\Omega}'|^{1-\frac{2}{q}} \lesssim 2^{-(1-\frac{2}{q})|K-K'|}\max\{|\tilde{\Omega}|,|\tilde{\Omega}'|\}^{2-\frac{4}{q}}.
\end{align*}
Thus, by symmetry, we may assume that $K < K'$ and $J > J'$.  By the bound $\#(\Theta(K)_\delta \times \Theta(K')_\delta) \lesssim \delta^{-2C_0}$ and the separation condition on $\fK$ (with $A$ sufficiently large), it suffices to prove that
\begin{align*}
\|\fE_0(\one_{\tilde{\Omega} \cap \theta})\fE_0(\one_{\tilde{\Omega}' \cap \sigma})\|_{q/2} \lesssim 2^{-c|K-K'|}\max\{|\tilde{\Omega}|,|\tilde{\Omega}'|\}^{2-\frac{4}{q}}
\end{align*}
for all $\theta \in \Theta(K)_\delta$, $\sigma \in \Theta(K')_\delta$, and some constant $c>0$.

Fix two such tiles $\theta,\sigma$, and set $\tau := \Phi^{-1}(\theta)$ and $\kappa := \Phi^{-1}(\sigma)$.  Thus, $\tau$ and $\kappa$ are dyadic rectangles of dimensions $2^{-J} \times 2^{-K}$ and $2^{-J'}\times 2^{-K'}$, respectively.  We note that our assumptions on $J,J',K,K'$ imply that $\tau$ is taller than $\kappa$ and $\kappa$ wider than $\tau$.  By translation, we may assume that the $\zeta_2$- and $\zeta_1$-axes intersect the centers of $\tau$ and $\kappa$, respectively.  Define
\begin{align*}
\tau_k := \begin{cases} \tau \cap \{\zeta : |\zeta_2| \sim 2^{-k}\}, &k < K',\\ \tau \cap \{\zeta : |\zeta_2| \lesssim 2^{-K'}\}, &k=K' \end{cases} \quad\quad\text{and}\quad\quad \kappa_j := \begin{cases} \kappa \cap \{\zeta : |\zeta_1| \sim 2^{-j}\}, &j < J,\\ \kappa \cap \{\zeta : |\zeta_1| \lesssim 2^{-J}\}, &j=J \end{cases},
\end{align*}
as well as $\theta_k := \Phi(\tau_k)$ and $\sigma_j := \Phi(\kappa_j)$.  Thus,
\begin{align*}
\theta = \bigcup_{k=0}^{K'}\theta_k \quad\quad\text{and}\quad\quad \sigma = \bigcup_{j=0}^J \sigma_j,
\end{align*}
so that by the triangle inequality,
\begin{align*}
\|\fE_0(\one_{\tilde{\Omega} \cap \theta})\fE_0(\one_{\tilde{\Omega}' \cap \sigma})\|_{q/2} \leq \sum_{k=0}^{K'}\sum_{j=0}^J\|\fE_0(\one_{\tilde{\Omega} \cap \theta_k})\fE_0(\one_{\tilde{\Omega}' \cap \sigma_j})\|_{q/2}.
\end{align*}

We first sum the terms with $k = K'$.  By the Cauchy--Schwarz inequality, Proposition \ref{const fiber}, and the fact that $|\det \nabla \Phi| \sim 1$, we have
\begin{align*}
\sum_{j=0}^J\|\fE_0(\one_{\tilde{\Omega} \cap \theta_{K'}})\fE_0(\one_{\tilde{\Omega}' \cap \sigma_j})\|_{q/2} \lesssim \sum_{j=0}^J |\theta_{K'}|^{1-\frac{2}{q}}|\sigma_j|^{1-\frac{2}{q}} \sim \sum_{j=0}^J|\tau_{K'}|^{1-\frac{2}{q}}|\kappa_j|^{1-\frac{2}{q}}.
\end{align*}
Since $\kappa$ has width $2^{-J'}$, there are at most two nonempty $\kappa_j$ with $j \leq J'$.  This fact and the bound
\begin{align}\label{small tile}
|\kappa_j| \leq \min\{2^{-(j-J')},1\}|\kappa|
\end{align}
imply that $\sum_{j=0}^J |\kappa_j|^{1-2/q} \lesssim |\kappa|^{1-2/q}$.  Since $|\tau_{K'}| \lesssim 2^{-(K'-K)}|\tau|$, $|\tau| \sim |\tilde{\Omega}|$, and $|\kappa| \sim |\tilde{\Omega}'|$, we altogether have
\begin{align*}
\sum_{j=0}^J\|\fE_0(\one_{\tilde{\Omega} \cap \theta_{K'}})\fE_0(\one_{\tilde{\Omega}' \cap \sigma_j})\|_{q/2} \lesssim 2^{-(K'-K)(1-\frac{2}{q})}|\tilde{\Omega}|^{1-\frac{2}{q}}|\tilde{\Omega}'|^{1-\frac{2}{q}},
\end{align*}
which is acceptable.  A similar argument shows that
\begin{align*}
\sum_{k=0}^{K'}\|\fE_0(\one_{\tilde{\Omega} \cap \theta_{k}})\fE_0(\one_{\tilde{\Omega}' \cap \sigma_J})\|_{q/2} &\lesssim 2^{-(J-J')(1-\frac{2}{q})}|\tilde{\Omega}|^{1-\frac{2}{q}}|\tilde{\Omega}'|^{1-\frac{2}{q}}\\
&\sim 2^{-(K'-K)(1-\frac{2}{q})}|\tilde{\Omega}|^{2-\frac{4}{q}}.
\end{align*}

We now consider the terms with $k < K'$ and $j < J$.  In this case, $\tau_k$ is contained in a union of four dyadic rectangles of dimensions $2^{-J} \times 2^{-\max\{K,k\}}$, and $\kappa_j$ is contained in a union of four dyadic rectangles of dimensions $2^{-\max\{J',j\}}\times 2^{-K'}$.  Moreover, these rectangles are separated by a distance of (at least) $2^{-k}$ and $2^{-j}$ in the vertical and horizontal directions, respectively.  Thus, we can apply Theorem \ref{T:bilinear tiles} to $\theta_k$ and $\sigma_j$ to get
\begin{align*}
\|\fE_0(\one_{\tilde{\Omega}\cap\theta_k})\fE_0(\one_{\tilde{\Omega}'\cap\sigma_j})\|_{q/2} \lesssim 2^{(j+k)(\frac{4}{q}-1)}|\tilde{\Omega}\cap\theta_k|^\frac{1}{2}|\tilde{\Omega}'\cap\sigma_j|^\frac{1}{2}.
\end{align*}
Using \eqref{small tile} and the analogous bound for $|\tau_k|$, we now get
\begin{align*}
\sum_{k=0}^{K'-1}\sum_{j=0}^{J-1}\|\fE_0(\one_{\tilde{\Omega}\cap\theta_k})\fE_0(\one_{\tilde{\Omega}'\cap\sigma_j})\|_{q/2} &\lesssim 2^{(J'+K)(\frac{4}{q}-1)}|\theta|^\frac{1}{2}|\sigma|^\frac{1}{2}\\
&\sim 2^{(J'-J+K-K')(\frac{2}{q}-\frac{1}{2})}|\tilde{\Omega}|^{1-\frac{2}{q}}|\tilde{\Omega}'|^{1-\frac{2}{q}}.
\end{align*}
By the relations $K < K'$ and $J > J'$ and the fact that $q < 4$, the lemma is proved.
\end{proof}

Returning to the proof of Lemma \ref{lem6}, we consider the second sum in \eqref{expansion}.  Given $\bK \in \fK^4 \setminus D(\fK^4)$, let $p(\bK) = (p_i(\bK))_{i=1}^4$ be a permutation of $\bK$ such that $|\Omega(p_1(\bK))^+|$ is maximal among $|\Omega(K_i)^+|$, $1 \leq i \leq 4$, and such that $|K_i - K_j| \leq 2|p_1(\bK)-p_2(\bK)|$ for all $1 \leq i,j \leq 4$.  Then by the Cauchy--Schwarz inequality, Lemma \ref{lem6}, the separation condition on $\fK$, the fact that $q > 3$, and choosing $A$ sufficiently large, we get
\begin{align*}
\sum_{\bK \in \fK^4 \setminus D(\fK^4)}\bigg\|\prod_{i=1}^4\fE_0\one_{\Omega(K_i)_\delta^+}\bigg\|_\frac{q}{4}^\frac{q}{4} &\lesssim \sum_{\substack{\bK \in \fK^4 \setminus D(\fK^4)\\ \bK = p(\bK)}} 2^{-c_0|p_1(\bK)-p_2(\bK)|}|\Omega(p_1(\bK))^+|^{q-2}\\
&\lesssim \sum_{K_1 \in \fK}\sum_{K_2 \in \fK}|K_1-K_2|^2 2^{-c_0|K_1-K_2|}|\Omega(K_1)^+|^{q-2}\\
&\lesssim \delta^\frac{c_0A}{2}\sum_{K_1 \in \fK}|\Omega(K_1)^+|^{q-2} \lesssim \delta^2|\Omega|^{q-2}.
\end{align*}
This concludes the proof of Lemma \ref{lem6}.
\end{proof}


\begin{proof}[Proof of Theorem  \ref{linear est zero region}]
By interpolation, it suffices to prove the analogous restricted strong type estimate.  Let $\Omega \subseteq U$ be a measurable set.  We have the decomposition
\begin{align*}
\Omega = \bigcup_{0 < \varepsilon \lesssim 1} \bigcup_{0 < \delta \lesssim \varepsilon^{1/4}} \bigcup_{K \in \fK(\varepsilon)} \Omega(K)_\delta^+.
\end{align*}
Thus, by the triangle inequality, Lemma \ref{lem6}, Proposition \ref{decomposition}, and the fact that $q > 3$, we obtain
\begin{align*}
\|\fE_0\one_\Omega\|_{q} &\leq \sum_{0 < \varepsilon \lesssim 1}\sum_{0 < \delta \lesssim \varepsilon^{1/4}}\bigg\|\sum_{K \in \fK(\varepsilon)}\fE_0\one_{\Omega(K)_\delta^+}\bigg\|_{q}\\
&\lesssim \sum_{0 < \varepsilon \lesssim 1}\sum_{0 < \delta \lesssim \varepsilon^{1/4}}\bigg((\log\delta^{-1})^{q}\sum_{K \in \fK(\varepsilon)}\|\fE_0\one_{\Omega(K)_\delta^+}\|_{q}^{q} + \delta|\Omega|^{q-2}\bigg)^\frac{1}{q}\\
&\lesssim \Bigg[\sum_{0 < \varepsilon \lesssim 1}\sum_{0 < \delta \lesssim \varepsilon^{1/4}} (\log\delta^{-1})\delta\bigg(\sum_{K \in \fK(\varepsilon)}|\Omega(K)^+|^{q-2}\bigg)^\frac{1}{q}\Bigg] + |\Omega|^{1-\frac{2}{q}}\\
&\lesssim \Bigg[\sum_{0 < \varepsilon \lesssim 1}\sum_{0 < \delta \lesssim \varepsilon^{1/4}} (\log\delta^{-1})\delta |\Omega|^{1-\frac{2}{q}}\Bigg] + |\Omega|^{1-\frac{2}{q}}\lesssim |\Omega|^{1-\frac{2}{q}}.
\end{align*}
\end{proof}

\section{Proof of Theorem \ref{T:bilinear tiles}} \label{S:bilinear tiles}
In this section, we prove Theorem \ref{T:bilinear tiles}, the bilinear restriction estimate for related tiles.  As mentioned above, we proceed by rescaling a result of Lee \cite{Lee neg}.

We begin by defining some notation.  The basic symmetries of the hyperbolic hyperboloid are the Lorentz transformations, which, given the parametrization \eqref{param hyp}, are the linear maps on $\R \times \R^2$ that preserve the quadratic form $(\tau,\xi) \mapsto \tau^2-\xi_1^2 + \xi_2^2$.  The Lorentz-invariant measure $\d\sigma$ on $\Sigma$ takes the form
\begin{align*}
 \int_\Sigma g \,\d\sigma := \int_U g(\phi(\xi),\xi)\frac{\d\xi}{\phi(\xi)},  
\end{align*}
where $\phi(\xi) = \sqrt{1+\xi_1^2-\xi_2^2}$ as before.  If $L$ is a Lorentz transformation and $\supp\,g  \subseteq \Sigma$ and $L^{-1}(\supp\,g) \subseteq \Sigma$, then
\begin{align*}
    \int_\Sigma (g \circ L) \d\sigma = \int_\Sigma g \,\d\sigma.
\end{align*}
Let $\Omega := \{\xi \in \R^2 :1+\xi_1^2-\xi_2^2 \geq 0\}$.  Given a Lorentz transformation $L$ and $\xi \in \Omega$, let
\begin{align*}
\overline{L}(\xi) := \pi(L(\phi(\xi),\xi)),
\end{align*}
where $\pi(\tau',\xi') := \xi'$ is the projection to the spatial coordinates. If $\xi \in \Omega$ and $e_1 \cdot L(\phi(\xi),\xi) \geq 0$, where $e_1 = (1,0,0)$ denotes the first standard basis vector, then $\overline{ML}(\xi) = \overline{M}(\overline{L}(\xi))$ for any other Lorentz transformation $M$.  In particular, if $E \subseteq \Omega$ and $e_1\cdot L(\phi(\xi),\xi) \geq 0$ for all $\xi \in E$, then $\overline{L}$ is invertible on $E$ with $\overline{L}^{-1}(\zeta) = \overline{L^{-1}}(\zeta)$ for $\zeta \in \overline{L}(E)$.

We now turn to the proof of Theorem \ref{T:bilinear tiles}. We may assume that $j \geq k$.  Fix $\theta_1 \sim \theta_2 \in \Theta_{j,k}$ and $c \in \theta_1 \cup \theta_2$.  Arguing as in the proof of Lemma \ref{tiles}, there exists an $O(2^{-j}) \times O(2^{-k})$ rectangle $S$ centered at $c$ with major axis $\ell_c^+$ such that $\theta_1 \cup \theta_2 \subseteq S$.  We define three Lorentz transformations,
\begin{align*}
R(\tau,\xi) &:= (-\omega_2\xi_2 + \omega_1\tau, \xi_1, \omega_1\xi_2+\omega_2\tau), \quad\quad\quad \omega := \bigg(\frac{\phi(c)}{\sqrt{1+c_1^2}},-\frac{c_2}{\sqrt{1+c_1^2}}\bigg) \in \mathbb{S}^1,\\
B(\tau,\xi) &:= \Big(-c_1\xi_1 + \sqrt{1+c_1^2}\tau, \sqrt{1+c_1^2}\xi_1-c_1\tau,\xi_2\Big),\\
D(\tau,\xi) &:= \frac{1}{2}\big(2\tau, \big(2^\frac{k-j}{2}+2^\frac{j-k}{2}\big)\xi_1 + \big(2^\frac{k-j}{2}-2^\frac{j-k}{2}\big)\xi_2, \big(2^\frac{k-j}{2}-2^\frac{j-k}{2}\big)\xi_1 + \big(2^\frac{k-j}{2}+2^\frac{j-k}{2}\big)\xi_2\big),
\end{align*}
which correspond to a spatial rotation, a boost, and a dilation, respectively. The composition $BR$ takes $(\phi(c),c)$ to $(1,0,0)$. We will show that $\overline{DBR}$ essentially takes $\theta_1,\theta_2$ to a pair of $O(2^{-\frac{j+k}{2}})$-squares near the origin, which we will then parabolically rescale to size $1$. After checking that the separation of $\theta_1$ and $\theta_2$ is respected by these rescalings, we will apply Lee's result \cite[Theorem 1.1]{Lee neg} to finish the proof.

We turn to the details.  Because Lorentz transformations preserve the hyperbolic hyperboloid and are linear, they must permute the rulings of the surface.  Thus, since $BR(\phi(c),c) = (1,0,0)$, we have either $\overline{BR}(\ell_c^+) = \ell_0^+$ or $\overline{BR}(\ell_c^+) = \ell_0^-$.  It is easy to check that in fact $\overline{BR}(\ell_c^+) = \ell_0^+ = \R(1,1)$ and that $\|\nabla \overline{BR}\| \lesssim 1$ near the origin.  Thus, by the definition of the rectangle $S$, it follows that $\overline{BR}(\theta_1 \cup \theta_2)$ is contained in an $O(2^{-j}) \times O(2^{-k})$ rectangle of slope $1$ centered at the origin. Since $D$ contracts by a factor of $O(2^\frac{k-j}{2})$ in the direction of $(1,1)$ and expands by a factor of $O(2^\frac{j-k}{2})$ in the orthogonal direction, $\overline{D}(\overline{BR}(\theta_1 \cup \theta_2))$ lies in a disc $V$ of radius $O(2^{-\frac{j+k}{2}})$ centered at $0$.  If $j$ and $k$ are sufficiently large (which we may assume), then $V \subseteq U$.  It is easy to check that $e_1 \cdot BR(\phi(\xi),\xi) \geq 0$ for all $\xi \in U$. Thus, setting $L = (DBR)^{-1}$, we have
\begin{align}\label{proj}
    \overline{L^{-1}}(\theta_1 \cup \theta_2) = \overline{D}(\overline{BR}(\theta_1 \cup \theta_2)) \subseteq V.
\end{align}

Let $Q_i := \overline{L}^{-1}(\theta_i) := \{\xi \in \Omega : \overline{L}(\xi) \in \theta_i\}$.  We claim that
\begin{align}\label{preimage}
Q_i = \overline{L^{-1}}(\theta_i).
\end{align}
Given a set $E \subseteq \Omega$, let $E^\pm := \{(\pm\phi(\xi),\xi) : \xi \in E\}$.  Then we have
\begin{align*}
    Q_i &= \{\xi \in \Omega : L(\phi(\xi),\xi) \in \theta_i^+ \cup \theta_i^-\}\\
    &= \{\xi \in \Omega : (\phi(\xi),\xi) \in L^{-1}(\theta_i^+) \cup (-L^{-1}((-\theta_i)^+))\}.
\end{align*}
It is easy to check that $e_1\cdot L^{-1}(\phi(\zeta),\zeta) > 0$ for all $\zeta \in U$.  Thus, since $-\theta_i \subseteq U$ and $\phi \geq 0$, we have $(\phi(\xi),\xi) \notin -L^{-1}((-\theta_i)^+)$ for every $\xi$.  Hence
\begin{align*}
    Q_i = \{\xi \in \Omega : (\phi(\xi),\xi) \in L^{-1}(\theta_i^+)\} = \overline{L^{-1}}(\theta_i),
\end{align*}
proving the claim.

Now, if $f$ is supported in $\theta_1 \cup \theta_2$, then by the Lorentz invariance of the measure $\d\sigma$,
\begin{align*}
\fE_0 f(t,x) = \fE_0 f_L(L^*(t,x)),
\end{align*}
where
\begin{align*}
    f_L(\xi) := \frac{f(\overline{L}(\xi))\phi(\overline{L}(\xi))}{\phi(\xi)}.
\end{align*}
We have $|f_L| \sim |f \circ \overline{L}|$ on $\supp f_L \subseteq Q_1 \cup Q_2 \subseteq V$.  Additionally, $e_1\cdot L(\phi(\xi),\xi) \geq 0$ for all $\xi \in V$, so we know that $\overline{L}$ is invertible on $Q_1 \cup Q_2$ with $\overline{L}^{-1}(\zeta) = \overline{L^{-1}}(\zeta)$ for $\zeta \in \overline{L}(Q_1 \cup Q_2)$.  A straightforward calculation shows that $|\det\nabla \overline{L}^{-1}(\zeta)| \lesssim 1$ on $\overline{L}(Q_1 \cup Q_2)$.  Combining these observations, we see that the estimate in Theorem \ref{T:bilinear tiles} is equivalent to
\begin{align}\label{equiv est}
\|\fE_0 f\fE_0 g\|_{q/2} \lesssim 2^{(j+k)(\frac{4}{q}-1)}\|f\|_2\|g\|_2
\end{align}
for all $f \in L^2(Q_1)$ and $g \in L^2(Q_2)$.

Now, by parabolic rescaling, we have
\begin{align*}
\fE_0 f(t,x) = 2^{-j-k}\fE_0^\psi [f(2^{-\frac{j+k}{2}}\cdot)](2^{-j-k}t, 2^{-\frac{j+k}{2}}x),
\end{align*}
for every $f$ supported in $Q_1 \cup Q_2 \subseteq V$, where $\fE_0^\psi$ is the extension operator associated to the phase $\psi(\xi) := 2^{j+k}\sqrt{1+2^{-j-k}(\xi_1^2-\xi_2^2)}$.  The estimate \eqref{equiv est} now follows from \cite[Theorem 1.1]{Lee neg}, provided the hypotheses of the latter are satisfied.  Let $\tilde{Q}_i := 2^\frac{j+k}{2}Q_i$.  We need to the check that
\begin{align*}
|\langle (\nabla^2\psi(\xi''))^{-1}(\nabla\psi(\xi)-\nabla\psi(\zeta)), \nabla\psi(\xi')-\nabla\psi(\zeta')\rangle| &\gtrsim 1,\\
|\langle (\nabla^2\psi(\zeta''))^{-1}(\nabla\psi(\xi)-\nabla\psi(\zeta)), \nabla\psi(\xi')-\nabla\psi(\zeta')\rangle| &\gtrsim 1,
\end{align*}
for all $\xi,\xi',\xi'' \in \tilde{Q}_1$ and $\zeta,\zeta',\zeta'' \in \tilde{Q}_2$.  Let $r(\xi) := (\xi_1,-\xi_2)$.  Simple calculations and the mean value theorem show that
\begin{align*}
\nabla\psi(\xi) &= r(\xi) + O(2^{-j-k}),\\
(\nabla^2\psi(\xi))^{-1} &= \left(\begin{array}{cc}1&0\\0&-1\end{array}\right) + O(2^{-j-k}),
\end{align*}
for all $|\xi| \leq 1$, and thus we only need to show that
\begin{align}\label{ex9}
|\langle \xi - \zeta, r(\xi'-\zeta')\rangle| \gtrsim 1
\end{align}
for all $\xi,\xi' \in \tilde{Q_1}$ and $\zeta,\zeta' \in \tilde{Q_2}$.

Let $p(\xi)$ and $q(\xi)$ denote the orthogonal projections of $\xi$ to the lines $\R(1,-1)$ and $\R(1,1)$, respectively.  That is,
\begin{align*}
p(\xi) &:= \frac{1}{2}(\xi_1-\xi_2,\xi_2-\xi_1)\\
q(\xi) &:= \frac{1}{2}(\xi_1+\xi_2,\xi_1+\xi_2).
\end{align*}
Assume for now that the following lemma holds:
\begin{lemma}\label{lem8}
For all $\xi \in Q_1$ and $\zeta \in Q_2$, we have
\begin{align*}
|p(\xi) - p(\zeta)| &\gtrsim 2^{-\frac{j+k}{2}},\\
|q(\xi) - q(\zeta)| &\gtrsim 2^{-\frac{j+k}{2}}.
\end{align*} 
\end{lemma}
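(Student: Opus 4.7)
The plan is to trace how the composition $\overline{L^{-1}} = \overline{D} \circ \overline{BR}$ transforms the separations between the related tiles $\theta_1 \sim \theta_2$. Since every element of $Q_i$ has the form $\overline{D}(\overline{BR}(\omega))$ with $\omega \in \theta_i$, it suffices to estimate $|p(\overline{D}(\overline{BR}(\xi))) - p(\overline{D}(\overline{BR}(\eta)))|$ and the analogous $q$-expression for arbitrary $\xi \in \theta_1$ and $\eta \in \theta_2$. I first note that $\overline{BR}$ sends not only $\ell_c^+$ to $\ell_0^+ = \R(1,1)$ (as established in the proof) but also $\ell_c^-$ to $\ell_0^- = \R(1,-1)$: Lorentz transformations permute the rulings of $\Sigma$, and the $\pm$ label is preserved by continuity from the case $c=0$. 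Because $|c| \lesssim 2^{-n+5}$ with $n$ large, $\overline{BR}$ is a bi-Lipschitz diffeomorphism on a neighborhood containing $\theta_1 \cup \theta_2$, with both $C^1$-norms bounded by absolute constants.

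From $\theta_1 \sim \theta_2$, the dyadic intervals $\pi^\pm(\theta_i)$ are non-adjacent with adjacent parents, so for every $\xi \in \theta_1$ and $\eta \in \theta_2$,
\begin{equation*}
  |\pi^+(\xi) - \pi^+(\eta)| \gtrsim 2^{-j}, \qquad |\pi^-(\xi) - \pi^-(\eta)| \gtrsim 2^{-k}.
\end{equation*}
To transfer these through $\overline{BR}$, I would examine $g_\pm := \pi^\pm \circ \overline{BR}^{-1}$. By Lemma \ref{properties of maps}(2), $\pi^+$ is constant on $\ell_c^+$-rulings, and $\overline{BR}^{-1}$ maps $\ell_0^+$-rulings back to $\ell_c^+$-rulings, so $g_+$ is constant on $\ell_0^+$-rulings and hence depends only on the $(1,-1)$-component; in particular $\nabla g_+ \parallel (1,-1)$. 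Quantitative submersivity of $\pi^+$ (a consequence of $\Phi^{-1} = \pi^+ \times \pi^-$ being a diffeomorphism with $\det \nabla \Phi \sim 1$, the bound $\|\pi^\pm\|_{C^1} \leq 3$, and the angle condition \eqref{ortho lines}) combined with the bi-Lipschitz property of $\overline{BR}^{-1}$ yields $|\nabla g_+| \sim 1$. Writing $\xi' := \overline{BR}(\xi)$ and $\eta' := \overline{BR}(\eta)$, we then obtain
\begin{equation*}
  |(\xi' - \eta') \cdot (1,-1)| \sim |g_+(\xi') - g_+(\eta')| = |\pi^+(\xi) - \pi^+(\eta)| \gtrsim 2^{-j},
\end{equation*}
and symmetrically $|(\xi' - \eta') \cdot (1,1)| \gtrsim 2^{-k}$.

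Finally, reading off the spatial block of the matrix defining $D$, we see that $\overline{D}$ is the linear map on $\R^2$ with eigenvectors $(1,1)$ and $(1,-1)$ and corresponding eigenvalues $2^{(k-j)/2}$ and $2^{(j-k)/2}$. Since $q$ and $p$ are, up to a factor of $\tfrac{1}{\sqrt{2}}$, the orthogonal projections onto $\R(1,1)$ and $\R(1,-1)$, linearity gives
\begin{equation*}
  |q(\overline{D}(\xi')) - q(\overline{D}(\eta'))| \sim 2^{(k-j)/2}|(\xi' - \eta') \cdot (1,1)| \gtrsim 2^{(k-j)/2} \cdot 2^{-k} = 2^{-(j+k)/2},
\end{equation*}
and similarly $|p(\overline{D}(\xi')) - p(\overline{D}(\eta'))| \gtrsim 2^{(j-k)/2}\cdot 2^{-j} = 2^{-(j+k)/2}$, completing the proof. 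The main obstacle I anticipate is establishing the \emph{lower} bound $|\nabla g_\pm| \gtrsim 1$ with constants independent of the center $c \in U$; this reduces to a uniform bi-Lipschitz bound on $\overline{BR}^{-1}$, which should follow from the explicit formulas for the Lorentz maps $B$ and $R$ together with the smallness of $|c|$, but requires careful bookkeeping to avoid any hidden dependence on the particular tile.
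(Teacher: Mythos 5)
There is a genuine gap in the step where you pass from ``$g_+$ is constant on $\ell_0^+$-rulings'' to ``hence depends only on the $(1,-1)$-component; in particular $\nabla g_+ \parallel (1,-1)$.'' The fibers of $g_+ = \pi^+ \circ \overline{BR}^{-1}$ near the origin are the line segments $\ell_\zeta^+$ with $\zeta$ near $0$; these are straight but \emph{not parallel to one another} -- the direction of $\ell_\zeta^+$ deviates from $\R(1,1)$ by an angle depending on $\zeta$. So $g_+$ is constant along each such ruling but is not a function of the $(1,-1)$-coordinate alone, and $\nabla g_+$ has a nonzero $(1,1)$-component of order that angular deviation. If you only invoke the bound used in the paper, $\angle(\ell_\zeta^+,\R(1,1)) \lesssim |\zeta|$, then since $\overline{BR}(\theta_1\cup\theta_2)$ lives in a disc of radius $O(2^{-k})$, the $(1,1)$-component of $\nabla g_+$ is $O(2^{-k})$ and the parasitic term $|\nabla g_+ \cdot (\xi'-\eta')|_{(1,1)\text{-part}}$ is $O(2^{-k}\cdot 2^{-k}) = O(2^{-2k})$. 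The target lower bound for the $(1,-1)$-separation is $2^{-j}$, and nothing in the setup prevents $j \geq 2k$, so the error you are silently dropping can overwhelm the quantity you want. (The $\pi^-$/$(1,1)$-side is safe, because there the target is the larger quantity $2^{-k}$.)

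The paper's proof dodges this precisely by \emph{first} applying $\overline{D}^{-1}$ and then comparing $\pi^+$ with the orthogonal projection $p$: at the $Q_i$-scale the points lie in a disc of radius $O(2^{-(j+k)/2})$, so $\angle(\ell_\xi^+,\R(1,1)) \lesssim 2^{-(j+k)/2}$, the leg $|BC| = |p(\xi)-\xi'|$ in the right triangle is $O(2^{-j-k})$, and after the $\overline{D}^{-1}$ rescaling the error is $O(2^{-(3j+k)/2})$, which is always $\ll 2^{-j}$ once $j,k>n$. Your pre-$\overline{D}$ argument could in principle be rescued by proving the sharper angle estimate $\angle(\ell_\zeta^+,\R(1,1)) \lesssim (\zeta_1-\zeta_2)^2$ (a direct Taylor expansion of the direction vector $(1+\zeta_1^2,\,\zeta_1\zeta_2 + \sqrt{1+\zeta_1^2-\zeta_2^2})$), which for $\zeta \in \overline{BR}(\theta_i)$ gives angle $O(2^{-2j})$ and an error $O(2^{-2j-k}) \ll 2^{-j}$; but you neither flag the need for this nor establish it, and the obstacle you \emph{do} flag ($|\nabla g_\pm| \gtrsim 1$) is not actually needed by the paper's argument, which only uses Lipschitz upper bounds on $\pi^\pm$ and $\overline{BR}^{-1}$ rather than any quantitative submersivity.
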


We immediately see that $|p(\xi-\zeta)| \gtrsim 1$ and $|q(\xi-\zeta)| \gtrsim 1$ for all $\xi \in \tilde{Q}_1$ and $\zeta \in \tilde{Q}_2$.  We will use these bounds to prove \eqref{ex9}.  We have
\begin{align*}
\langle \xi - \zeta, r(\xi' - \zeta') \rangle &= \langle p(\xi-\zeta) + q(\xi-\zeta), p(r(\xi'-\zeta')) + q(r(\xi'-\zeta'))\rangle\\
&= \langle p(\xi-\zeta),p(r(\xi'-\zeta'))\rangle + \langle q(\xi-\zeta),q(r(\xi'-\zeta'))\rangle
\end{align*}
by orthogonality.  Using the relations $p \circ r = r \circ q$ and $q \circ r = r \circ p$ and the fact that $r$ is unitary, we thus have
\begin{align}\label{ex10}
\langle \xi- \zeta, r(\xi'-\zeta')\rangle = \langle r(p(\xi-\zeta)), q(\xi'-\zeta') \rangle + \langle q(\xi-\zeta), r(p(\xi'-\zeta'))\rangle.
\end{align}
Using the fact that the sets $q(\tilde{Q}_1)$ and $q(\tilde{Q}_2)$ (resp.~$p(\tilde{Q}_1)$ and $p(\tilde{Q}_2))$ are disjoint and each of them is connected, one sees that $q(\xi-\zeta)$ and $q(\xi'-\zeta')$ are parallel, as are $p(\xi-\zeta)$ and $p(\xi'-\zeta')$.  Since $r$ is unitary, $r(p(\xi-\zeta))$ and $r(p(\xi'-\zeta'))$ are also parallel.  Hence, both terms on the right-hand side of \eqref{ex10} have the same sign, and thus it suffices to bound one of them from below.  We have
\begin{align*}
|\langle r(p(\xi-\zeta)),q(\xi'-\zeta')\rangle| = |p(\xi-\zeta)||q(\xi'-\zeta')| \gtrsim 1.
\end{align*}

It remains to prove Lemma \ref{lem8}.
\begin{proof}[Proof of Lemma \ref{lem8}]
We know that $Q_i = \overline{L^{-1}}(\theta_i) = \overline{D}(\overline{BR}(\theta_i))$, by \eqref{preimage} and \eqref{proj}.  We claim, first, that
\begin{align}\label{initial bound}
\notag|\pi^+(\xi)-\pi^+(\zeta)| &\gtrsim 2^{-j},\\ |\pi^-(\xi)-\pi^-(\zeta)| &\gtrsim 2^{-k}
\end{align}
for all $\xi \in \overline{BR}(\theta_1)$ and $\zeta \in \overline{BR}(\theta_2)$, where $\pi^\pm$ are the projections used throughout the previous section (recall \eqref{eq:defpi}).  Using that $\|(BR)^{-1}\| \lesssim 1$, it is not difficult to show that $\overline{BR}^{-1}$ exists in a neighborhood of $0$ (of constant size) and is given by $\overline{BR}^{-1}(\eta) = \overline{(BR)^{-1}}(\eta)$.  We showed above that the sets $\overline{BR}(\theta_i)$ lie in a disc of radius $O(2^{-k})$ centered at $0$.  Fix $\xi \in \overline{BR}(\theta_1)$ and $\zeta \in \overline{BR}(\theta_2)$, and let $\xi' := \overline{BR}^{-1}(\xi)$, $\alpha := \overline{BR}^{-1}(\pi^+(\xi),0)$ and $\zeta' := \overline{BR}^{-1}(\zeta)$, $\beta := \overline{BR}^{-1}(\pi^+(\zeta),0)$. Since $(\pi^+(\xi),0) \in \ell_\xi^+$ by Lemma \ref{properties of maps}, it follows that $\alpha \in \ell_{\xi'}^+$.  Similarly, $\beta \in \ell_{\zeta'}^+$.  Thus, since $\pi^+$ is constant along lines of the form $\ell_\eta^+$, by Lemma \ref{properties of maps}, and $\theta_1 \sim \theta_2$, we have
\begin{align*}
    2^{-j} &\lesssim |\pi^+(\xi') - \pi^+(\zeta')|\\
    &= |\pi^+(\alpha)-\pi^+(\beta)|\\
    &\lesssim |\alpha - \beta|\\
    &= |\overline{BR}^{-1}(\pi^+(\xi),0) - \overline{BR}^{-1}(\pi^+(\zeta),0)|\\
    &\lesssim |\pi^+(\xi) - \pi^+(\zeta)|.
\end{align*}
A similar argument gives the second estimate in \eqref{initial bound}.

We will now use \eqref{initial bound} to prove the lemma.  We only prove the first estimate; the second one follows by a similar argument.  Fix $\xi \in Q_1$ and $\zeta \in Q_2$, and let $\alpha = \overline{D}^{-1}(p(\xi)) = 2^\frac{k-j}{2}p(\xi)$ and $\beta = \overline{D}^{-1}(p(\zeta)) = 2^\frac{k-j}{2}p(\zeta)$.  It suffices to show that $|\alpha - \beta| \gtrsim 2^{-j}$.  Let $\xi'$ be the intersection of the lines $\ell_\xi^+$ and $\R(1,-1)$.  The points $A := \xi$, $B := p(\xi)$, and $C := \xi'$ form a right triangle with hypotenuse $AC$.  One easily checks that $\angle(\ell_\xi^+,\R(1,1)) \lesssim |\xi|$.  Thus, $\angle CAB \lesssim 2^{-\frac{j+k}{2}}$ by the fact that $\xi \in Q_1 \subseteq V$.  We also have $|AC| \lesssim 2^{-\frac{j+k}{2}}$, and thus $|p(\xi) - \xi'| = |BC| \lesssim 2^{-j-k}$.  Let $\alpha' = \overline{D}^{-1}(\xi') = 2^\frac{k-j}{2}\xi'$.  Then $|\alpha - \alpha'| = 2^\frac{k-j}{2}|p(\xi)-\xi'| \lesssim 2^{-\frac{3j+k}{2}}$.  Because $\xi' \in \ell_\xi^+$, we have $\ell_{\xi'}^+ = \ell_\xi^+$ (see the proof of Lemma \ref{properties of maps}) and thus $\alpha' \in \overline{D}^{-1}(\ell_\xi^+) = \ell_{\overline{D}^{-1}(\xi)}^+$.  Thus, since $\pi^+$ is constant along $\ell_{\overline{D}^{-1}(\xi)}^+$, we have
\begin{align*}
|\pi^+(\alpha) - \pi^+(\overline{D}^{-1}(\xi))| = |\pi^+(\alpha) - \pi^+(\alpha')| \lesssim |\alpha - \alpha'| \lesssim 2^{-\frac{3j+k}{2}},
\end{align*}
and by a similar argument,
\begin{align*}
|\pi^+(\beta) - \pi^+(\overline{D}^{-1}(\zeta))| \lesssim 2^{-\frac{3j+k}{2}}.
\end{align*}
Since $\overline{D}^{-1}(\xi) \in \overline{B}\overline{R}(\theta_1)$ and $\overline{D}^{-1}(\zeta) \in \overline{B}\overline{R}(\theta_2)$, the first estimate in \eqref{initial bound} now gives the bound
\begin{align*}
2^{-j} \lesssim |\pi^+(\overline{D}^{-1}(\xi)) - \pi^+(\overline{D}^{-1}(\zeta))| = |\pi^+(\alpha) - \pi^+(\beta)| + O(2^{-\frac{3j+k}{2}}),
\end{align*}
and consequently
\begin{align*}
|\alpha - \beta| \gtrsim |\pi^+(\alpha) - \pi^+(\beta)| \gtrsim 2^{-j},
\end{align*}
which is what we needed to show.
\end{proof}

\begin{remark}\label{scaling line}
As an application, we pause to explain how the bilinear theory for $\fE_0$ can be used to obtain further (conditional) linear estimates for $\fE_0$ on the parabolic scaling line $p = (q/2)'$.  Similar to the case of the hyperbolic paraboloid (see \cite{BS neg}), the proof of Theorem \ref{linear est zero region} can be adjusted to give the following conditional bilinear-to-linear result:  Given $3 < q_0 < 4$, if there exists some $p_0 < (\frac{q_0}{2})'$ such that 
\[\|\fE_0 f\fE_0 g\|_{q_0/2} \lesssim 2^{(j+k)(\frac{4}{q_0}+\frac{2}{p_0}-2)}\|f\|_{p_0}\|g\|_{p_0},\] 
for all functions $f$, $g$ supported in related tiles in $\Theta_{j,k}$, then $\fE_0$ is bounded from $L^{(q/2)'}$ to $L^q$ for all $q > q_0$.  In \cite{BenHypHyp}, the first author showed the following: If $q_0>3.25$, $p_0 > (\frac{q_0}{2})'$, and $0 < r \leq 1$, then $\|\fE_0^r f\|_{q_0} \lesssim \|f\|_{p_0}$ uniformly in $r$, where
\begin{align*}
    \fE_0^r f(t,x) := \int_U e^{i(t,x)\cdot(r^{-2}\sqrt{1+r^2(\xi_1^2-\xi_2^2)},\xi)}f(\xi)\d\xi.
\end{align*}
Using this result, the Cauchy--Schwarz inequality, a parabolic rescaling argument (utilizing the uniformity in $r$), and interpolation with Theorem \ref{T:bilinear tiles}, one can show that the hypothesis of the conditional version of Theorem \ref{linear est zero region} holds for each $q_0 > 3.25$.  We conclude that $\fE_0$ is bounded from $L^{(q/2)'}$ to $L^q$ for every $q > 3.25$.
\end{remark}

\section{Bilinear adjoint restriction on annuli}\label{S:bilinear conic}

In the next two sections we establish bounds for the extension operator associated to dyadic annuli in our hyperboloid.  By invariance under cylindrical rotations and the triangle inequality, it suffices to consider subsets of these annuli with some angular restriction, and we abuse notation (relative to the introduction) by defining
$$
\Gamma_N:=\{(\tau,\xi) \in \Gamma : |\xi| \sim 2^N,~ |\tfrac{\xi}{|\xi|}-e_1| < 0.001\},
$$
where $e_1$ denotes the usual first coordinate vector.  We will use the notation $f_N$ to denote a function supported on $\Gamma_N$.

The focus of this section will be on establishing bounds in the bilinear range, where our results are unconditional and our deduction is more straightforward.  We will then turn to the conditional result in the next section, the proof of which will use some of the lemmas from this section.  

\begin{proposition}\label{P:annuli bilin range}
Let $(\tfrac q2)' \leq p \leq q$ and $4 > q > \tfrac{10}3$.  Then 
$$
\|\scriptE f_N\|_q \lesssim \|f_N\|_p,
$$
for all functions $f_N$ supported on $\Gamma_N$.  
\end{proposition}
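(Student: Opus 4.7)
The plan is to transfer the unconditional low-frequency bound of Theorem~\ref{linear est zero region} to $\Gamma_N$ via Lorentz boosts, and then to assemble the resulting local bounds into a global bound by a bilinear-to-linear argument of cone type.

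First, since every Lorentz transformation $L$ preserves $\d\sigma$ and acts on spacetime by a linear map of determinant $1$, each $L^p(\Gamma;\d\sigma)\to L^q(\R^3)$ extension estimate is invariant under pre-composition with $L$. I would partition the angular range of $\Gamma_N$ into $\sim 2^N$ sectors $\Gamma_N^\omega$ of angular width $\sim 2^{-N}$ (equivalently, unit Euclidean width), and, for each such sector, choose a Lorentz boost $L_\omega$ that maps $\Gamma_N^\omega$ onto an $O(1)$-sized region, comparable to the low-frequency region from Theorem~\ref{linear est zero region}. Combining Lorentz invariance with Theorem~\ref{linear est zero region}, together with a H\"older step to pass from $p=(q/2)'$ to all $p\in[(q/2)',q]$ (legitimate since the image region has bounded $\sigma$-measure), yields
$$
\|\scriptE f\|_{L^q(\R^3)}\lesssim \|f\|_{L^p(\Gamma_N^\omega;\,\d\sigma)},
$$
uniformly in $N$ and $\omega$.

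Next, I would establish a bilinear estimate for pairs of transversal sectors,
$$
\|\scriptE f_1\,\scriptE f_2\|_{L^{q/2}(\R^3)}\lesssim \Delta^{\alpha(p,q)}\,\|f_1\|_{L^p}\|f_2\|_{L^p},
$$
for $f_i$ supported in $\Gamma_N^{\omega_i}$ with angular separation $|\omega_1-\omega_2|\sim \Delta\in[2^{-N},1]$, the key point being a favorable power of $\Delta$. For $p=2$, this follows by applying an additional Lorentz boost that rescales the pair to $O(1)$-separated sectors near the vertex, at which scale $\Gamma$ is an $O(2^{-N})$-perturbation of its asymptotic cone, and then invoking the standard $L^2\times L^2\to L^{q/2}$ bilinear estimate for the cone in $\R^{1+2}$, valid for $q>\tfrac{10}{3}$. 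The full range $(q/2)'\le p\le q$ then follows by interpolating this bilinear estimate against the uniform linear estimate of the previous paragraph (embedded diagonally via Cauchy--Schwarz).

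Finally, a Whitney decomposition of $\Gamma_N\times\Gamma_N$ into pairs of sectors of comparable angular size at every dyadic scale $\Delta=2^{-j}$, $0\le j\le N$, combined with a Tao--Vargas--Vega bilinear-to-linear deduction in the spirit of the argument already used to prove Theorem~\ref{linear est zero region}, reduces the proposition to summing the bilinear bounds dyadically in $j$; for $q>\tfrac{10}{3}$ this sum converges uniformly in $N$. The main technical obstacle is bookkeeping: one must track the Jacobians of the various Lorentz boosts carefully so that the power of $\Delta$ in the bilinear estimate is favorable throughout the whole range $(q/2)'\le p\le q$, $\tfrac{10}{3}<q<4$, and check that the Whitney sum closes uniformly in $N$. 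A secondary, essentially cosmetic, point is controlling the passage from the genuine hyperboloid to its asymptotic cone; after boosting to unit scale the curvature corrections are $O(2^{-N})$ perturbations and do not affect the exponent range.
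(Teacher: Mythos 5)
Your proposal follows the same strategy as the paper: thin $(N,N)$-sector bounds are obtained from the low-frequency result by Lorentz boost (Lemma~\ref{L:zero to thin sector}), an $L^2$-based bilinear cone-type estimate is invoked for Whitney pairs of sectors (Theorem~\ref{T:bilin cone}), and a Tao--Vargas--Vega bilinear-to-linear deduction closes the sum (the paper packages this as Lemma~\ref{bilinear-to-linear}, applied with $s=2$). Your additional H\"older step to upgrade from $p=(q/2)'$ to the full range of $p$ on thin sectors is also what the paper implicitly uses.

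One geometric point in your sketch of the bilinear step is incorrect as stated and worth flagging. You propose to Lorentz-boost a Whitney pair of sectors ``to $O(1)$-separated sectors near the vertex, at which scale $\Gamma$ is an $O(2^{-N})$-perturbation of its asymptotic cone.'' Near the vertex ($|\xi|\sim 1$) the hyperbolic hyperboloid has genuine negative Gaussian curvature and is \emph{not} a small perturbation of the cone, which is degenerate there. The correct picture is the opposite: it is at high frequency scale $2^M$ that $\Gamma$ approximates the cone, and after the \emph{conic dilation} $(\tau,\xi)\mapsto 2^{-M}(\tau,\xi)$ (which does not preserve $\Gamma$ or $\d\sigma$) the frustum $\Gamma_M$ becomes an $O(2^{-2M})$-thick neighborhood of the unit-scale cone. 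To make this into a uniform bilinear estimate with the correct power of the separation (the factor $2^{-(N-k)(6/q-1)}$ in Theorem~\ref{T:bilin cone}) one needs a stability theorem for bilinear restriction under such perturbations of the cone; this is precisely what Candy's multi-scale result \cite{Candy} supplies and what the paper cites. Once you replace your boost-to-the-vertex picture with this conic-rescaling-plus-stability argument (or simply quote \cite{Candy}), your outline coincides with the paper's proof.
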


The remainder of this section will be devoted to the proof of Proposition~\ref{P:annuli bilin range}.  

We will work on sectors of varying width contained in the $\Gamma_N$.  Let $C \leq k \leq N$.  By an {\it $(N,k)$-sector}, we mean a set of the form
$$
\Gamma_{N,k}^\omega:=\{(\tau,\xi) \in \Gamma_N : |\tfrac{\xi}{|\xi|}-\omega| < 2^{-k}\},
$$
with $\omega \in \mathbb{S}^1$; we refer to $2^{-k}$ as the angular width of the sector.  

We begin by establishing bounds on the thinnest sectors.  

\begin{lemma}\label{L:zero to thin sector}
For any $p,q$, validity of $\scriptR_0^*(p \to q)$ implies that
\begin{equation} \label{E:zero to thin sector}
\|\scriptE f_{N,N}^\omega\|_q \lesssim \|f_{N,N}^\omega\|_p,
\end{equation}
for every function $f_{N,N}^\omega$ supported in an $(N,N)$-sector, $N \geq 1$.  In particular, \eqref{E:zero to thin sector} holds for all $q \geq 2p'$ when $q>\tfrac{10}3$.  
\end{lemma}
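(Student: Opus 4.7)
The plan is to exploit the Lorentz invariance of the extension inequality on $\Gamma$ to reduce bounds on an $(N,N)$-sector to bounds on the low-frequency region $\{|\xi| \lesssim 1\}$, where the hypothesis $\scriptR_0^*(p \to q)$ directly applies.

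First, I would record that the extension inequality is Lorentz-invariant. If $L$ is any Lorentz transformation of $\R^{1+2}$ (so $L$ preserves both $\Gamma$ and $d\sigma$, with $|\det L|=1$) and we set $g := f \circ L^{-1}$, then $\|g\|_{L^p(\Gamma;d\sigma)} = \|f\|_{L^p(\Gamma;d\sigma)}$ by invariance of $d\sigma$. Changing variables $p = Lq$ in
$$\widehat{g\,d\sigma}(x) = \int_\Gamma e^{-ix\cdot p} f(L^{-1}p)\,d\sigma(p)$$
yields $\scriptE g = (\scriptE f)\circ L^T$, and since $|\det L^T|=1$ we obtain $\|\scriptE g\|_q = \|\scriptE f\|_q$. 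Therefore it suffices to exhibit a Lorentz transformation $L$ with $L^{-1}(\Gamma_{N,N}^\omega) \subseteq \{|\xi| \lesssim 1\}$, and then apply $\scriptR_0^*(p \to q)$.

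Second, I would construct such an $L$ explicitly. By rotational symmetry (a Lorentz transformation) we may take $\omega = e_1$, and by the reflection $\tau \mapsto -\tau$ (which preserves $\Gamma$ and $d\sigma$ and leaves $\|\scriptE f\|_q$ invariant) combined with the triangle inequality, we may restrict to the portion of $\Gamma_{N,N}^{e_1}$ with $\tau>0$. Parametrize this via hyperbolic polar coordinates,
$$(\tau,\xi_1,\xi_2) = (\sinh r,\, \cosh r\cos\theta,\, \cosh r\sin\theta),$$
so that the relevant points obey $\cosh r \sim 2^N$ and $|\theta| \lesssim 2^{-N}$. The Lorentz boost $L_\nu$ from \eqref{E:Lorentz} with $\nu = (\sinh s)\,e_1$ acts on the line $\theta = 0$ as the rapidity shift $r \mapsto r-s$. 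Choosing $s$ equal to the midpoint of the radial range (so $\sinh s \sim 2^N$), a direct computation using $\cos\theta = 1 + O(2^{-2N})$ shows that each of the three coordinates $\tau',\xi_1',\xi_2'$ of $L_\nu$ applied to a point of $\Gamma^+ \cap \Gamma_{N,N}^{e_1}$ is $O(1)$; in particular the image lies in a fixed neighborhood of $(0,1,0)$, and hence in $\{|\xi'| \lesssim 1\}$.

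Third, for the "in particular" assertion, it suffices to verify that $\scriptR_0^*(p \to q)$ holds for $q > 10/3$ and $p \geq (q/2)'$. Theorem~\ref{linear est zero region}, combined with the equivalence of $\fE_0$ and $\scriptE_0$ noted in Section~\ref{S:unconditional bounds in zero region}, gives this at the endpoint $p = (q/2)'$. Since $\sigma(\{|\xi| \lesssim 1\}) < \infty$ (the $1/\hjp{\xi}$ singularity at $|\xi|=1$ is integrable in two dimensions), H\"older's inequality on the low-frequency region extends the bound to all $p \geq (q/2)'$. The only substantive computation is the check in step two that a boost with $\jp\nu \sim 2^N$ carries the $(N,N)$-sector inside $\{|\xi|\lesssim 1\}$; everything hinges on the cancellation between $\cosh s\sinh r$ and $\sinh s\cosh r$ once the rapidity is matched to the annular scale $r \sim N\log 2$.
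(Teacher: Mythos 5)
Your proposal is correct and takes essentially the same route as the paper: a Lorentz boost $L_\nu$ with $|\nu|\sim 2^N$ in the direction $\omega$ carries the $(N,N)$-sector into the low-frequency region, and Lorentz invariance of $\d\sigma$ together with unimodularity of $L_\nu$ transfers the $\scriptR_0^*$ bound. (One small notational slip: having set $g:=f\circ L^{-1}$, you need $L(\Gamma_{N,N}^\omega)$, not $L^{-1}(\Gamma_{N,N}^\omega)$, to lie in $\{|\xi|\lesssim 1\}$, which is indeed what your boost achieves.)
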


\begin{proof}
We recall the definition \eqref{E:Lorentz} of the Lorentz boost $L_\nu$ and the Lorentz invariance of our measure. The deduction claimed in the lemma follows from the observation that if $\omega \in \mathbb{S}^1$ and $N \geq 1$, $L_{2^N \omega}$ maps $\Gamma_{N,N}^\omega$ into $\Gamma_0$.  
\end{proof}

Now we turn to the deduction of bounds on the $\Gamma_N$ from those on the $\Gamma_{N,N}^\omega$, for which we adapt the bilinear theory for the cone.  

For $k < N$, we say that two $(N,k)$-sectors, $\Gamma_{N,k}^{\omega}$ and $\Gamma_{N,k}^{\omega'}$ are related, $\Gamma_{N,k}^{\omega} \sim \Gamma_{N,k}^{\omega'}$, when $2^{-k+4}\leq  |\omega - \omega'| \leq 2^{-k+8}$.  We say that two $(N,N)$-sectors, $\Gamma_{N,N}^{\omega}$ and $\Gamma_{N,N}^{\omega'}$ are related when $|\omega-\omega'| \leq 2^{-N+8}$.    

We can deduce a near-optimal $L^2$-based bilinear adjoint restriction theorem for related $(N,k)$-sectors from results already in the literature.  Namely, one may directly apply the bilinear restriction method from \cite{TaoParab} (which was quickly observed to apply to conic surfaces) and conic rescaling, or else directly apply the results of \cite{Candy} to obtain the following.  

\begin{theorem} [\cite{Candy, VargasLee_kflat, TaoParab}] \label{T:bilin cone}
Let $C \leq k < N$, let $\Gamma_{N,k}^{\omega_1}$ and $\Gamma_{N,k}^{\omega_2}$ be related $(N,k)$-sectors, and let $f_1,f_2$ be $L^2$ functions supported on $\Gamma_{N,k}^{\omega_1},\Gamma_{N,k}^{\omega_2}$, respectively.  Then
\begin{equation} \label{E:bilin cone}
\|\scriptE f_1 \scriptE f_2\|_{L^{q/2}} \lesssim 2^{-(N-k)(\frac 6q-1)}\|f_1\|_2\|f_2\|_2, \qquad q > \tfrac{10}3.
\end{equation}
\end{theorem}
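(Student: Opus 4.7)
The plan is to reduce the bilinear estimate on $\Gamma_N$ to the known bilinear restriction estimate for the light cone in $\R^{1+2}$, exploiting the fact that $\Gamma_N$ is asymptotic to the cone at high frequencies. Parametrizing $\Gamma_N$ by $\xi\in\R^2$ with $|\xi|\sim 2^N$, a Taylor expansion yields $\sqrt{|\xi|^2-1}=|\xi|+O(2^{-N})$, so $\Gamma_N$ differs from the cone $\{(|\xi|,\xi)\}$ by $O(2^{-N})$ in a $C^\infty$ sense at scale $2^N$, which becomes a perturbation of size $O(4^{-N})$ after rescaling to unit scale.

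I would first perform a conic rescaling: after a rotation one may assume $\omega_1=e_1$, and then combining a radial dilation and a Lorentz boost along $e_1$ (both of which are exact symmetries of the cone and approximate symmetries of $\Gamma$) maps an angular sector of width $2^{-k}$ at radius $2^N$ to one of unit angular width and unit radius. This sends the pair of related sectors $\Gamma_{N,k}^{\omega_i}$ to a pair of $O(1)$-separated unit-scale, unit-angular-width sectors on a surface $\tilde\Gamma$ that remains a small $C^\infty$-perturbation of the cone, provided $N>k$.

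Next, at unit scale I would invoke the bilinear cone restriction theorem --- pioneered by Wolff for the cone in $\R^{1+2}$ and extended to the perturbative and general-surface settings in \cite{TaoParab, VargasLee_kflat, Candy} --- which gives
$$
\|\scriptE \tilde f_1 \, \scriptE \tilde f_2\|_{q/2} \lesssim \|\tilde f_1\|_2\|\tilde f_2\|_2, \qquad q > \tfrac{10}{3}.
$$
Reversing the rescaling and tracking the Jacobians in the Fourier and physical variables yields the claimed factor $2^{-(N-k)(6/q-1)}$ by direct dimensional analysis.

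The hard part will be verifying that the hypotheses of the cone bilinear theorem genuinely apply to the rescaled surface $\tilde\Gamma$, uniformly in $N$ and $k$ --- in particular, that the perturbation from the cone is small compared to every length scale appearing in Wolff's argument. A cleaner route is to bypass the rescaling and invoke Candy's abstract framework \cite{Candy} on the sectors $\Gamma_{N,k}^{\omega_i}$ directly; this reduces to explicit computations showing that the unit normals to related sectors are $\sim 2^{-k}$-separated and that the second fundamental forms and bilinear transversality constants satisfy uniform lower bounds --- all of which follow readily from the parametrization of $\Gamma$ and the double-ruling structure that has already played a central role in the previous sections.
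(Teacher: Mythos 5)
Your approach matches the paper's, which itself does not prove Theorem~\ref{T:bilin cone} but rather attributes it to the cited literature and gives exactly the two derivations you describe: apply the bilinear restriction machinery of \cite{TaoParab} together with conic rescaling, or directly invoke the abstract framework of \cite{Candy}. Your rescaling (rotation, conic dilation, and Lorentz boost) is the correct conformal map that sends a pair of related $(N,k)$-sectors to unit-scale, unit-angular-width, transversal caps near the cone, and the exponent $2^{-(N-k)(6/q-1)}$ does indeed fall out of the Jacobian bookkeeping (and coincides with what Candy's Theorem~1.4 gives with $N_1=N_2$).

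One point worth tightening: you state that the deviation from the cone ``becomes a perturbation of size $O(4^{-N})$ after rescaling to unit scale,'' but that figure corresponds only to the radial dilation by $2^{-N}$. After you then apply the boost and renormalize the radius back to unit scale (equivalently, the parabolic rescaling that stretches the angular direction by $2^k$ and the normal direction by $2^{2k}$, as in Section~\ref{S:decoupling}), the surface you land on is $\{\tau^2-|\xi|^2 = -2^{2(k-N)}\}$, so at unit radius the deviation from the cone is $O(4^{k-N})$, not $O(4^{-N})$. This distinction matters for the uniformity you flag as the ``hard part'': when $N-k$ is bounded the perturbation is $O(1)$ and a black-box application of a fixed small-perturbation bilinear cone theorem does not apply; one must either invoke a framework with quantitative hypotheses stable under $O(1)$ deformations, which is precisely what \cite{Candy} provides and is why the paper lists it, or handle the $N-k\lesssim 1$ case separately using the estimates on $(N,N)$-sectors from Lemma~\ref{L:zero to thin sector}. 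Also, the Lorentz boost is an \emph{exact} symmetry of $\Gamma$, not merely an approximate one; it is only the conic dilation that is an approximate symmetry, carrying $\Gamma$ onto $\{\tau^2-|\xi|^2=-\lambda^2\}$.
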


We state our bilinear-to-linear deduction in slightly more general terms than we need in this section in order to facilitate later arguments.  

\begin{lemma}\label{bilinear-to-linear}
Let $3 < q < 4$, $(\tfrac q2)' \leq p \leq q $, and $s\leq p$.  Assume that  $\scriptR_0^*(p \to q)$ holds and that for $C \leq k < N$, 
\begin{align}\label{ex4}
\|\fE f_1 \fE f_2\|_{q/2} \lesssim 2^{-(N-k)\alpha}\|f_1\|_s\|f_2\|_s,
\end{align}
whenever $f_1$ and $f_2$ are supported in related $(N,k)$-sectors. If $\alpha \geq \tfrac2s-\tfrac2p$, $\alpha > 0$, and either $\alpha \neq \tfrac2s-\tfrac2q$ or $p < q$, then
\begin{align*}
\|\fE f_N\|_q \lesssim \|f_N\|_p,
\end{align*}
for all measurable functions $f_N$ satisfying $|f_N| \sim \one_{\Omega_N}$, for some $\Omega_N \subseteq \Gamma_N$.  
\end{lemma}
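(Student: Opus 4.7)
The plan is to run a standard bilinear-to-linear deduction by Whitney-decomposing $\Gamma_N \times \Gamma_N$ in the angular variable. For each scale $C \leq k \leq N$ and each $(N,k)$-sector $\Gamma_{N,k}^\omega$, I would set $f_{N,k}^\omega := f_N \one_{\Gamma_{N,k}^\omega}$. Up to a diagonal of measure zero, a Whitney decomposition gives
\[
\Gamma_N \times \Gamma_N \;=\; \bigsqcup_{k=C}^{N}\,\bigsqcup_{\omega \sim_k \omega'} \Gamma_{N,k}^\omega \times \Gamma_{N,k}^{\omega'},
\]
so that $|\scriptE f_N|^2 = \sum_{k,\omega \sim \omega'} \scriptE f_{N,k}^\omega \cdot \scriptE f_{N,k}^{\omega'}$. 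The products on the right have Fourier support in $\Gamma_{N,k}^\omega + \Gamma_{N,k}^{\omega'}$, and these sum sets are essentially disjoint across distinct Whitney triples $(k,\omega,\omega')$, so the near-orthogonality principle (applicable since $1 < q/2 < 2$; compare the appeal to \cite[Lem.~6.1]{TVV} in Section~\ref{S:unconditional bounds in zero region}) gives
\[
\|\scriptE f_N\|_q^q \;\lesssim\; \sum_{k=C}^{N} \sum_{\omega \sim \omega'} \bigl\|\scriptE f_{N,k}^\omega\,\scriptE f_{N,k}^{\omega'}\bigr\|_{q/2}^{q/2}.
\]

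Since $|f_N| \sim \one_{\Omega_N}$, at the finest scale $k = N$, where the bilinear hypothesis \eqref{ex4} is not supplied, I would apply Cauchy--Schwarz followed by Lemma~\ref{L:zero to thin sector} --- this is precisely where $\scriptR_0^*(p \to q)$ enters --- bounding each summand by $\|f_{N,N}^\omega\|_p^{q/2}\|f_{N,N}^{\omega'}\|_p^{q/2}$. Using $q \geq p$ and $\sum_\omega |\Omega_N \cap \Gamma_{N,N}^\omega| \leq |\Omega_N|$ shows that the $k=N$ contribution is bounded by $|\Omega_N|^{q/p}$. For $C \leq k < N$ I instead apply \eqref{ex4}, sum over the $O(1)$ related partners, and use $|\Omega_N \cap \Gamma_{N,k}^\omega| \leq \min(|\Omega_N|,|\Gamma_{N,k}^\omega|) \sim \min(|\Omega_N|, 2^{N-k})$ together with $\sum_\omega|\Omega_N \cap \Gamma_{N,k}^\omega| = |\Omega_N|$, to obtain
\[
\sum_{\omega \sim \omega'} \|\scriptE f_{N,k}^\omega\,\scriptE f_{N,k}^{\omega'}\|_{q/2}^{q/2} \;\lesssim\; 2^{-(N-k)\alpha q/2}\,\min\bigl(|\Omega_N|, 2^{N-k}\bigr)^{q/s - 1}\,|\Omega_N|.
\]

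The hard part is the case analysis needed to sum these $k$-contributions against $|\Omega_N|^{q/p}$, and this is where every hypothesis on $\alpha$ is consumed. I would split into two regimes. In the \emph{coarse} regime $2^{N-k} \geq |\Omega_N|$, the bound becomes $|\Omega_N|^{q/s} \cdot 2^{-(N-k)\alpha q/2}$, which sums geometrically thanks to $\alpha > 0$; when $|\Omega_N| \geq 1$ the dominant term sits at $2^{N-k}\sim|\Omega_N|$ and yields $|\Omega_N|^{q/s - \alpha q/2} \leq |\Omega_N|^{q/p}$ precisely when $\alpha \geq 2/s - 2/p$, while for $|\Omega_N| \leq 1$ the total is just $O(|\Omega_N|^{q/s}) \leq |\Omega_N|^{q/p}$ since $s \leq p$. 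In the \emph{fine} regime $2^{N-k} \leq |\Omega_N|$ (which forces $|\Omega_N| \geq 1$), the bound becomes $|\Omega_N|\cdot 2^{(N-k)(q/s - 1 - \alpha q/2)}$, whose geometric behaviour is controlled by the sign of $q/s - 1 - \alpha q/2$, changing exactly at $\alpha = 2/s - 2/q$: for $\alpha > 2/s - 2/q$ the sum is $O(|\Omega_N|) \leq |\Omega_N|^{q/p}$ by $q \geq p$; for $\alpha < 2/s - 2/q$ the largest $N-k$ dominates and again gives $|\Omega_N|^{q/s - \alpha q/2} \leq |\Omega_N|^{q/p}$ via $\alpha \geq 2/s - 2/p$; at the borderline $\alpha = 2/s - 2/q$ one picks up a logarithmic factor $\log|\Omega_N|$ that can be absorbed into the slack $|\Omega_N|^{q/p - 1}$ only when $p < q$. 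This last point is exactly the role of the dichotomy ``$\alpha \neq 2/s - 2/q$ or $p < q$'', and tracking these cases is the principal bookkeeping obstacle.
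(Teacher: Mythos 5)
Your proof is correct and follows essentially the same strategy as the paper: Whitney decomposition into related $(N,k)$-sectors, Cauchy--Schwarz combined with Lemma~\ref{L:zero to thin sector} at the finest scale $k=N$, the bilinear hypothesis \eqref{ex4} at coarser scales, and a case analysis of the resulting geometric sums that consumes each hypothesis on $\alpha$ in exactly the way you describe. One small technical caveat: you invoke near-orthogonality across \emph{all} Whitney triples $(k,\omega,\omega')$ simultaneously, which requires the Fourier-support parallelepipeds $P_{N,k}^\omega + P_{N,k}^{\omega'}$ to have bounded overlap across every scale $k$; the paper only records (and only uses) the within-scale bounded overlap, applying the triangle inequality in $k$ before invoking \cite[Lemma~6.1]{TVV} within each fixed scale. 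Your stronger across-scale statement is in fact true here (the sumsets at scale $k$ live in a shell at distance $\sim 2^{N-2k}$ from the doubled cone, and distinct scales occupy essentially nested shells), but since the paper's $\ell^1$-in-$k$ version already suffices and leads to the identical case analysis, you would do well to either prove the cross-scale overlap or simply fall back on the triangle inequality in $k$ as the paper does.
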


Lemma~\ref{bilinear-to-linear} implies a restricted strong type inequality for extension from the $\Gamma_N$, which can be interpolated to yield strong type inequalities since we work with exponents obeying $q \geq p$.   

\begin{proof}[Proof of Lemma~\ref{bilinear-to-linear}]
We may choose $O(2^{-k})$-separated collections $\scriptD_{N,k}\subseteq \mathbb{S}^1$, 
$C \leq k \leq N$,  such that whenever $(\tau,\xi),(\tau',\xi') \in \Gamma_N$, there exists a pair of related $(N,k)$-sectors $\Gamma_{N,k}^{\omega} \ni (\tau,\xi)$ and $\Gamma_{N,k}^{\omega'}\ni (\tau',\xi')$, with $\omega,\omega' \in \scriptD_{N,k}$.  Here $k=N$ if $|\tfrac{\xi}{|\xi|}-\tfrac{\xi'}{|\xi'|}| \lesssim 2^{-N}$, and $2^{-k} \sim |\tfrac{\xi}{|\xi|}-\tfrac{\xi'}{|\xi'|}|$, otherwise.  We will abuse notation by saying that for $\omega,\omega' \in \scriptD_{N,k}$, $\omega \sim \omega'$ if $\Gamma_{N,k}^\omega \sim \Gamma_{N,k}^{\omega'}$.  Thus we may decompose
\begin{equation} \label{E:Whitney}
\Gamma_N \times \Gamma_N:= \bigcup_{k=C}^N \bigcup_{\omega \sim \omega' \in \scriptD_{N,k}} \Gamma_{N,k}^\omega \times  \Gamma_{N,k}^{\omega'}.  
\end{equation}
We will later use the geometric property that each $\Gamma_{N,k}^{\omega}$ is contained in a parallelepiped $P_{N,k}^\omega$, such that the sumsets $P_{N,k}^\omega+P_{N,k}^{\omega'}$ are finitely overlapping as the pair $\omega \sim \omega' \in \scriptD_{N,k}$ varies.  

Let $f_N$ be a measurable function with $|f_N| \sim \one_{\Omega_N}$, for some subset $\Omega_N \subseteq \Gamma_N$.  Using the decomposition \eqref{E:Whitney} to make a partition of unity, we have
\begin{align*}
\|\fE \one_{\Omega_N}\|_q^2 &= \|(\fE \one_{\Omega_N})^2\|_{q/2}\\
&\leq \bigg\|\sum_{\omega \sim \omega' \in \sD_{N,N}}\scriptE f_{N,N}^\omega \scriptE f_{N,N}^{\omega'}\bigg\|_{q/2} + \bigg\|\sum_{k=C}^{N-1}\sum_{\omega\sim\omega'\in\sD_{N,k}} \scriptE f_{N,k}^\omega \scriptE f_{N,k}^{\omega'}\bigg\|_{q/2}=: I_1 + I_2,
\end{align*}
where the $f_{N,k}^\omega$ are measurable functions supported on the $\Gamma_{N,k}^\omega$ with $|f_{N,k}^\omega| \lesssim |f_N|$.  

We begin with the first term.  By almost orthogonality and the finite overlap of sumsets, the Cauchy--Schwarz inequality, the hypothesis that $\scriptR_0^*(p \to q)$ holds, and the fact that $q \geq p$, we have
\begin{align*}
I_1 
&\lesssim \bigg(\sum_{\omega \sim \omega'\in\sD_{N,N}}\|\fE f_{N,N}^\omega\fE f_{N,N}^{\omega'}\|_{q/2}^{q/2}\bigg)^{2/q}
\leq \bigg(\sum_{\omega \sim \omega'\in\sD_{N,N}}\|\fE f_{N,N}^\omega\|_q^{q/2}\|\fE f_{N,N}^{\omega'}\|_q^{q/2}\bigg)^{2/q}\\
&\lesssim \bigg(\sum_{\omega \sim \omega'\in\sD_{N,N}}\|f_{N,N}^\omega\|_p^{ q/2}\|f_{N,N}^{\omega'}\|_p^{ q/2}\bigg)^{2/q}
\lesssim \bigg(\sum_{\omega \in \sD_{N,N}} \|f_N \one_{\Gamma_{N,N}^\omega}\|_p^q\bigg)^{2/q}
\lesssim \|f_N\|_p^2.
\end{align*}

Now we turn to the second term.  Let $\Omega_{N,k}^\omega:=\Omega_N \cap \Gamma_{N,k}^\omega$. By the triangle inequality, the Tao--Vargas--Vega orthogonality lemma \cite[Lemma 6.1]{TVV}, and the aforementioned finite overlap property of sumsets, and then \eqref{ex4} and some standard reindexing,
\begin{align*}
    I_2 
    \lesssim \sum_{k=C}^{N-C} \left(\sum_{\omega \sim \omega' \in \scriptD_{N,k}} \|\scriptE f_{N,k}^\omega \scriptE f_{N,k}^{\omega'}\|_{\frac q2}^{\frac q2}\right)^{\frac 2q}
    \lesssim \sum_{k=C}^{N-C} 2^{-(N-k)\alpha}\bigl(\sum_{\omega \in \scriptD_{N,k}} \sigma(\Omega_{N,k}^\omega)^{\frac qs} \bigr)^{\frac 2q}.
\end{align*}
Thus by H\"older's inequality and the estimates 
$$
\sigma(\Omega_{N,k}^\omega) \leq \min\{\sigma(\Omega_N),\sigma(\Gamma_{N,k}^\omega)\} \qtq{and} \sigma(\Gamma_{N,k}^\omega) \sim 2^{N-k}, 
$$
we see that
\begin{align*}
    I_2 &\lesssim \sum_{j=C}^{N-C} 2^{-j\alpha}\min\{2^{j(\frac 2s-\frac2q)},\sigma(\Omega_N)^{\frac2s-\frac2q}\}|\Omega_N|^{\frac 2q} \\
    &\leq  \sum_{j=C}^{\log_2(\sigma(\Omega_N))} 2^{j(\frac2s-\frac2q-\alpha)}\sigma(\Omega_N)^{\frac 2q} + \sum_{j=\max\{C,\log_2(\sigma(\Omega_N))\}}^{N-C} 2^{-j\alpha}\sigma(\Omega_N)^{\frac 2s}=:I_2'+I_2''.
\end{align*}
When $\sigma(\Omega_N) \leq 1$, $I_2'=0$ and $I_2'' \sim \sigma(\Omega_N)^{\frac 2s} \leq \sigma(\Omega_N)^{\frac 2p}$, since $s\leq p$.  
When $\sigma(\Omega_N) \geq 1$, $I_2'' \sim \sigma(\Omega_N)^{\frac 2s-\alpha} \leq \sigma(\Omega_N)^{\frac 2p}$.  If, in addition, $\tfrac2s-\tfrac2q-\alpha < 0$, $I_2' \sim \sigma(\Omega_N)^{\frac 2q} \leq \sigma(\Omega_N)^{\frac 2p}$.  Meanwhile, if $\tfrac2s-\tfrac2q -\alpha > 0$, $I_2' \sim \sigma(\Omega_N)^{\frac 2s-\alpha} \leq \sigma(\Omega_N)^{\frac 2p}$.  Finally, if $\alpha = \frac2s-\frac2q$ and $p<q$, then $I_2' \sim \log(\sigma(\Omega_N))\sigma(\Omega_N)^{\frac2q} \lesssim \sigma(\Omega_N)^{\frac2p}$.

In any case, combining our estimates for $I_1$ and $I_2$ gives $\|\fE f_N\|_q^2 \lesssim \sigma(\Omega_N)^{\frac2p}$, completing the proof of the lemma.  
\end{proof}

Theorem \ref{T:bilin cone}, Theorem \ref{linear est zero region}, Lemma~\ref{L:zero to thin sector}, Lemma \ref{bilinear-to-linear} (with $q \geq p>2$, $q \geq 2p'$, and $s=2$), and real interpolation together  imply that $\|\scriptE f\|_q \lesssim \|f\|_p$ for all $f \in C_c^\infty(\Gamma_N)$ when $q > 10/3$ and $(\tfrac q2)' \leq p \leq q$.  Thus the proof of Proposition~\ref{P:annuli bilin range} is complete.

\section{Reduction to bounds on $\Gamma_0$ via decoupling} \label{S:decoupling}

In the previous section we showed how to deduce bounds for extension from the dyadic annuli $\Gamma_N$ from those for extension from $\Gamma_0$ by using the bilinear adjoint restriction inequality \eqref{E:bilin cone}.  This approach is limited, since we do not currently know any such result with $q \leq \tfrac{10}3$.  In this section, we will use the conic decoupling theorem of Bourgain--Demeter to obtain new bounds for extension from $\Gamma_N$, conditional on further improvements to $\scriptR_0^*(p \to q)$.  
The entirety of this section will be devoted to a proof of the following result.  

\begin{proposition}\label{P:annuli via decoup}
Suppose that $\scriptR_0^*((\frac{q_0}{2})'\rightarrow q_0)$ holds for some $q_0 < \tfrac{10}3$.  
 Then for $(p,q)$ obeying $(\tfrac q2)' \leq p \leq q$ and 
\begin{equation} \label{E:conditional p}
\frac{1}{p} > \frac 25 \cdot \frac{1/q-3/10}{1/q_0-3/10}+\frac1{10},
\end{equation}
we have
\begin{align}\label{bounds on annuli}
\|\fE f_N\|_q \lesssim \|f_N\|_{p},
\end{align}
for all $f_N \in L^{p}(\Gamma_N)$, with bounds uniform in $N$.
\end{proposition}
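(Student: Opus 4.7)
The plan is to reduce the proposition to Lemma~\ref{bilinear-to-linear} by producing, for each admissible $(p,q)$, a bilinear extension estimate for related $(N,k)$-sectors with sufficient decay in $2^{-(N-k)}$. Two such bilinear estimates will be constructed and interpolated: (i) a conditional estimate at $(p_0,q_0):=((q_0/2)',q_0)$ obtained from the hypothesis via Bourgain--Demeter conic decoupling, and (ii) the unconditional $L^2\times L^2\to L^{q_1/2}$ bilinear estimate of Theorem~\ref{T:bilin cone}, valid for any $q_1>\tfrac{10}{3}$.

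To produce (i), I would first invoke Lemma~\ref{L:zero to thin sector} to transfer the hypothesis to an $(N,N)$-sector bound $\|\scriptE f_{N,N}^\omega\|_{q_0}\lesssim \|f_{N,N}^\omega\|_{p_0}$. Since $\Gamma_N$ lies, after a $2^{-N}$-rescaling, in an $O(2^{-2N})$-neighborhood of the light cone, the Bourgain--Demeter $\ell^2$-conic decoupling decomposes each $(N,k)$-sector into its $(N,N)$-sub-sectors, for $q\leq 6$, with only $\varepsilon$-loss. Specializing at $q=q_0$, substituting the $(N,N)$-sector bound, and using H\"older (noting $p_0>2$ since $q_0<\tfrac{10}{3}$) to convert the $\ell^2$-sum of $L^{p_0}$-norms into an $\ell^{p_0}$-sum at cost $2^{(N-k)(1/2-1/p_0)}$, followed by Cauchy--Schwarz, produces the bilinear estimate
\[
\|\scriptE f_1 \scriptE f_2\|_{q_0/2} \lesssim 2^{(N-k)(1-2/p_0+O(\varepsilon))} \|f_1\|_{p_0} \|f_2\|_{p_0}
\]
on related $(N,k)$-sectors.

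Bilinear interpolation of (i) with (ii), at interpolation parameter $\theta\in[0,1]$, gives an estimate on related $(N,k)$-sectors at exponents $(s_\theta,q_\theta)$ with $1/s_\theta=(1-\theta)/p_0+\theta/2$ and $1/q_\theta=(1-\theta)/q_0+\theta/q_1$, having decay exponent $\alpha_\theta=-(1-\theta)(1-2/p_0)+\theta(6/q_1-1)-O(\varepsilon)$. A direct algebraic computation, taking $q_1\downarrow \tfrac{10}{3}$, shows that the Lemma~\ref{bilinear-to-linear} hypotheses $\alpha_\theta>0$ and $\alpha_\theta\geq 2/s_\theta-2/p$ translate, after relabeling $q_\theta\mapsto q$, precisely to $q>q_0$ and $\tfrac{1}{p}>\tfrac{2}{5}\cdot\tfrac{1/q-3/10}{1/q_0-3/10}+\tfrac{1}{10}$. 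Lemma~\ref{bilinear-to-linear} then delivers $\|\scriptE f_N\|_q\lesssim \|f_N\|_p$ uniformly in $N$; its standing prerequisite $\scriptR_0^*(p\to q)$ follows by interpolating the hypothesis with the unconditional Theorem~\ref{linear est zero region}. The main obstacle is that the bilinear estimate from step (i), taken alone, exhibits $2^{+(N-k)}$-\emph{growth} rather than decay (a direct consequence of $p_0>2$), and is therefore useless for Lemma~\ref{bilinear-to-linear}; it becomes useful only when paired against the strong $2^{-(N-k)}$-decay of (ii), so the interpolation arithmetic and the $q_1\downarrow \tfrac{10}{3}$ limit must be tracked with some care to extract the stated conditional range exactly.
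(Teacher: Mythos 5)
Your proposal follows the same overall architecture as the paper's proof: derive a conditional bilinear estimate for related $(N,k)$-sectors from the hypothesis via conic decoupling (the paper packages this as Proposition~\ref{P:sectors via decoup} followed by Cauchy--Schwarz), interpolate against the unconditional $L^2$-based bilinear estimate of Theorem~\ref{T:bilin cone}, feed the interpolant into Lemma~\ref{bilinear-to-linear}, and let $q_1 \searrow \tfrac{10}{3}$. Your exponent bookkeeping for $1/s_\theta$, $1/q_\theta$, and $\alpha_\theta$ matches the paper's (using $1-2/p_0 = 4/q_0 - 1$ on the scaling line), and your closing observation that the decoupling-derived bilinear estimate has \emph{growth} in $N-k$ and must be paired with the Candy/Tao decay in (ii) is exactly the point.

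There is, however, one genuine gap in the decoupling step of (i). You place $\Gamma_N$, after the $2^{-N}$-dilation, in an $O(2^{-2N})$-neighborhood of the cone and then apply Bourgain--Demeter with $\delta \sim 2^{-2N}$ to decouple an $(N,k)$-sector into its $(N,N)$-subsectors, recording the loss as $2^{(N-k)O(\eps)}$. But decoupling at scale $\delta$ costs $\delta^{-\eps} \sim 2^{2N\eps}$, a factor that depends on $N$ alone and diverges as $N \to \infty$ for any fixed $\eps>0$. Since Proposition~\ref{P:annuli via decoup} requires bounds uniform in $N$, this version does not close. The paper's proof of Proposition~\ref{P:sectors via decoup} cures this by composing the conic dilation $D$ with a Lorentz-type map $L$ that expands the $(N,k)$-sector to angular width $1$ while preserving the cone; the image then lies in a $\delta \sim 2^{2(k-N)}$-neighborhood, so the decoupling loss is $2^{2(N-k)\eps}$ as you want. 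Once that angular (``parabolic'') rescaling is inserted, your estimate for (i) is exactly the paper's, and the remainder of your argument coincides with the paper's proof.
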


Our main tool in the proof of Proposition~\ref{P:annuli via decoup} is the following consequence of Bourgain--Demeter's decoupling theorem for the cone.  

\begin{proposition}\label{P:sectors via decoup}
Suppose that $\scriptR_0^*(p \rightarrow q)$ holds for some $p \geq (\frac{q}{2})'$, $q \leq 4$.  Then 
\begin{align*}
\|\fE f \|_q \lesssim_\varepsilon 2^{(N-k)(\frac{1}{2}-\frac{1}{p}+\varepsilon)}\|f\|_p
\end{align*}
for all functions $f$ supported in an $(N,k)$-sector and all $\varepsilon > 0$.
\end{proposition}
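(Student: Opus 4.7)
The proof will combine the conic decoupling theorem of Bourgain--Demeter with Lemma~\ref{L:zero to thin sector} and the hypothesis $\scriptR_0^*(p\to q)$. The plan is to decouple the $(N,k)$-sector into the thinnest sub-sectors to which Lemma~\ref{L:zero to thin sector} applies, estimate each piece via the hypothesis, and then reassemble through H\"older's inequality.

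Concretely, I would partition $\Gamma_{N,k}^\omega$ into $\sim 2^{N-k}$ essentially disjoint $(N,N)$-sub-sectors $\Gamma_{N,N}^{\omega_j}$, indexed by a maximal $2^{-N}$-separated collection of directions $\omega_j$ with $|\omega-\omega_j|<2^{-k}$, and correspondingly write $f=\sum_j f_j$ with each $f_j$ supported in $\Gamma_{N,N}^{\omega_j}$. I would then invoke the Bourgain--Demeter $\ell^2(L^q)$ decoupling for the cone in $\R^{1+2}$, which is valid in the full range $q\leq 6$ and hence for our $q\leq 4$; this would yield
\[
\|\scriptE f\|_q\;\lesssim_\varepsilon\;2^{(N-k)\varepsilon}\Bigl(\sum_j \|\scriptE f_j\|_q^2\Bigr)^{1/2}.
\]
Lemma~\ref{L:zero to thin sector} combined with the hypothesis $\scriptR_0^*(p\to q)$ then supplies the piecewise bound $\|\scriptE f_j\|_q\lesssim \|f_j\|_p$. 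Finally, since $p\geq(\tfrac{q}{2})'\geq 2$ for $q\leq 4$, H\"older's inequality applied to the length-$2^{N-k}$ sequence $\{\|f_j\|_p\}_j$ gives
\[
\Bigl(\sum_j \|f_j\|_p^2\Bigr)^{1/2}\;\lesssim\;2^{(N-k)(\tfrac12-\tfrac1p)}\Bigl(\sum_j \|f_j\|_p^p\Bigr)^{1/p}\;=\;2^{(N-k)(\tfrac12-\tfrac1p)}\|f\|_p,
\]
the last equality using the (almost) disjointness of the supports of the $f_j$. Chaining the three displays produces the claimed inequality.

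The main technical point I expect to confront is justifying the cone decoupling on a sector of the \emph{hyperboloid}, rather than of an actual cone. I would handle this by observing that at $|\xi|\sim 2^N$ the hyperboloid $\Gamma$ lies within $O(2^{-N})$ of the light cone in the normal direction, which is considerably smaller than the normal thickness of a canonical Bourgain--Demeter conic plate at the relevant scale; consequently each $(N,N)$-piece of $\Gamma$ sits inside a cone plate of matching dimensions, and the cone decoupling inequality transfers to the associated extension functions without loss. An alternative route would be to first apply a Lorentz rescaling to place $\Gamma_{N,k}^\omega$ into a normalized position where a unit-scale cone decoupling inequality is directly applicable.
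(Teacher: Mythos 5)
Your decomposition into $(N{,}N)$-sub-sectors, the use of Lemma~\ref{L:zero to thin sector} plus $\scriptR_0^*(p\to q)$ for the piecewise bound, and the H\"older step (using $p\geq 2$) all match the paper's strategy. However, your main route has a genuine gap in the decoupling constant, and the ``alternative route'' you mention in passing is actually the essential step. After conically dilating $\Gamma_{N,k}^\omega$ by $2^{-N}$ you obtain a piece lying in a $\delta'=O(2^{-2N})$-neighborhood of the unit cone but confined to an angular sector of aperture only $2^{-k}$. If you simply ``invoke'' Bourgain--Demeter decoupling at scale $\delta'$ on this set, the constant you inherit is $(\delta')^{-\varepsilon}\sim 2^{2N\varepsilon}$, which is not of the form $2^{(N-k)\varepsilon}$ once $N$ is large and $k$ is close to $N$. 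To get the right constant you must first apply a Lorentz rescaling $L$ of the cone that stretches the $2^{-k}$-angular sector to unit aperture; this sends the $2^{-2N}$-neighborhood to a $\delta=O(2^{2(k-N)})$-neighborhood, and only then does BD decoupling at scale $\delta$ give the constant $\delta^{-\varepsilon}\sim 2^{2(N-k)\varepsilon}$ and plates of angular width $\delta^{1/2}=2^{k-N}$, which pull back under the rescaling to precisely the angular scale $2^{-N}$ of your $(N,N)$-sub-sectors. This is what the paper does by defining $M=LD$ and applying decoupling with $\delta=C'2^{2(k-N)}$.

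There is also a second point you gloss over. Even once the rescaling $M$ is in place, the plates $\Delta$ produced by the decoupling theorem are not literally the images $M(\theta)$ of the $(N,N)$-sectors $\theta$; the hyperboloid is only $O(2^{-N})$-close to the cone and $M$ introduces nontrivial distortion in the radial variable. One must verify (the paper's claim \eqref{count}) that each $\Delta$ pulled back by $M^{-1}$ meets only $O(1)$ of the $\theta\in\fP$ before the $\ell^2$-reassembly and H\"older step can be carried out the way you (and the paper) do. The paper checks this by a polar-coordinate computation and a mean-value-theorem estimate on the angular distortion of $M$. Your remark that the $(N,N)$-pieces ``sit inside cone plates of matching dimensions'' is pointing in the right direction, but it does not by itself control how many $(N,N)$-sectors a single rescaled plate can see, which is what is needed for the $\ell^2(L^q)$ sum on the right-hand side to close.
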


\begin{proof}[Proof of Propostion~\ref{P:sectors via decoup}]
Let $\kappa$ be an $(N,k)$-sector, let $f_\kappa$ be supported in $\kappa$, and let $\fP$ be a partition of $\kappa$ into $(N,N)$-sectors.  The estimate $\scriptR_0^*(p\rightarrow q)$ and Lemma \ref{L:zero to thin sector} imply that
\begin{align}\label{thin sector est}
    \|\fE f_\theta\|_q \lesssim \|f_\theta\|_p
\end{align}
for all $f_\theta$ supported in $\theta \in \fP$.  In particular, if $N - k \lesssim 1$, then $\#\fP \lesssim 1$ and the required estimate is a consequence of the triangle inequality and \eqref{thin sector est}.  We may assume, therefore, that $N - k \geq C$ for some sufficiently large constant $C$.

We proceed by rescaling extension estimates on $\kappa$ to those on a nearly conic set of angular width $1$ in the region $|\xi| \sim 1$, where Bourgain--Demeter's conic decoupling theorem can be directly applied.  We may assume by rotational symmetry that
\begin{align}\label{def of kappa}
    \kappa = \{(\llangle \xi \rrangle, \xi) : 2^N \leq |\xi| \leq 2^{N+1},~\angle(\xi,(1,0)) \leq 2^{-k-1}\}.
\end{align}
Thus, $\kappa$ lies in an $O(2^{-N})$-neighborhood of the conic sector 
\begin{align*}
\kappa_c := \{(|\xi|,\xi) : 2^N \leq |\xi| \leq 2^{N+1},~\angle(\xi,(1,0)) \leq 2^{-k-1}\}.
\end{align*}
Let $D$ be the conic dilation $D(\tau,\xi) := 2^{-N}(\tau,\xi)$.  Then $D(\kappa_c)$ is a conic sector of angular width $2^{-k}$ in $C_0 := \{(|\xi|,\xi) : 1 \leq |\xi| \leq 2\}$ that contains the point $(1,1,0)$.  Let $L$ be the linear map satisfying
\begin{align*}
L(0,0,1) &= 2^{k}(0,0,1),\\
L(1,1,0) &= (1,1,0),\\
L(-1,1,0) &= 2^{2k}(-1,1,0).
\end{align*}
Geometrically, the vectors $(0,0,1)$, $(1,1,0)$, and $(-1,1,0)$ are respectively ``angularly tangent," ``radially tangent," and normal to $C_0$ at the point $(1,1,0)$.  The map $L$ preserves the cone and expands $D(\kappa_c)$ to angular width $1$.  Now, set $M = LD$ and $\delta = C'2^{2(k-N)}$, where $C'$ is a constant.  If $C'$ is sufficiently large, then $M(\kappa)$ lies in the $\delta$-neighborhood of a conic frustum $\tilde{C}_0$, a slight enlargement of $C_0$.  Let $\d M_\ast\sigma$ be the pushforward measure on $M(\Gamma)$, given by
\begin{align*}
\int_{M(\Gamma)} g\, \d M_\ast \sigma := \int_{\Gamma}g \circ M\, \d\sigma,
\end{align*}
and let $\fE^M g := (g\d M_\ast\sigma)^\vee$.  Let $\tilde{\fP}$ be a partition of the $\delta$-neighborhood of $\tilde{C}_0$ into sectors $\Delta$ of angular width $\delta^{1/2}$ and thickness $\delta$.  By conic decoupling, see \cite[Theorem~1.2]{BDcone}, the inequality
\begin{align}\label{dec}
\|\fE^M g\|_q \lesssim_\varepsilon \delta^{-\varepsilon}\bigg(\sum_{\Delta \in \tilde{\fP}}\|\fE^M(g\one_{\Delta'})\|_q^2\bigg)^\frac{1}{2}
\end{align}
holds for all $g$ supported in $M(\kappa)$, where $\Delta' := \Delta \cap M(\kappa)$.  We claim that every $\Delta \in \tilde{\fP}$ obeys the bound
\begin{align}\label{count}
\#\{\theta \in \fP : \theta \cap M^{-1}(\Delta') \neq \emptyset\} \lesssim 1.
\end{align}
Then, taking $g = f \circ M^{-1}$ in \eqref{dec}, rescaling, and applying \eqref{thin sector est} and H\"older's inequality, we get
\begin{align*}
\|\fE f\|_q &\lesssim_\varepsilon \delta^{-\varepsilon}\bigg(\sum_{\Delta \in \tilde{\fP}}\|\fE(f\one_{M^{-1}(\Delta')})\|_q^2\bigg)^\frac{1}{2}\\
&\lesssim \delta^{-\varepsilon}\bigg(\sum_{\Delta \in \tilde{\fP}}\sum_{\substack{\theta \in \fP :\\  \theta \cap M^{-1}(\Delta') \neq \emptyset}}\|f\|_{L^p(\theta \cap M^{-1}(\Delta'))}^2\bigg)^\frac{1}{2}\\
&\lesssim 2^{(N-k)(\frac{1}{2}-\frac{1}{p}+2\varepsilon)}\|f\|_p.
\end{align*}
Since $\varepsilon$ is arbitrary, the proof is complete modulo the claim \eqref{count}.

To begin the proof of \eqref{count}, we record the following notation: The angular separation of $\zeta,\zeta' \in \R^3$ is defined as
\begin{align*}
    \dist_{\ang}(\zeta,\zeta') := \left\vert \frac{(\zeta_2,\zeta_3)}{|(\zeta_2,\zeta_3)|} - \frac{(\zeta_2',\zeta_3')}{|(\zeta_2',\zeta_3')|}\right\vert.
\end{align*}
Now, fix $\Delta \in \tilde{\fP}$ and let $n := \#\{\theta \in \fP : \theta \cap M^{-1}(\Delta') \neq \emptyset\}$. We need to show that $n \lesssim 1$, so we may assume that $n \geq 3$. Then there exist $\zeta,\zeta' \in \kappa \cap M^{-1}(\Delta')$ such that $\dist_{\ang}(\zeta,\zeta') \gtrsim n2^{-N}$.  Since $\Delta'$ has angular width $O(2^{k-N})$, it suffices to show that $\dist_{\ang}(M(\zeta),M(\zeta')) \gtrsim 2^k\dist_{\ang}(\zeta,\zeta')$.  Toward that end, it will be convenient to understand how $M$ transforms the polar coordinates $(\llangle \xi \rrangle, \xi) =: (\llangle r \rrangle, r\cos\nu, r \sin\nu)$, where $\llangle r \rrangle := \sqrt{r^2-1}$.  We compute that $M(\llangle r \rrangle, r\cos\nu,r\sin\nu) = 2^{-N-1}(m_i(r,\nu))_{i=1}^3$, where
\begin{align*}
    m_1(r,\nu) &:= (1+2^{2k})\llangle r \rrangle + (1-2^{2k})r\cos\nu,\\
    m_2(r,\nu) &:= (1-2^{2k})\llangle r \rrangle + (1+2^{2k})r\cos\nu,\\
    m_3(r,\nu) &:= 2^{k+1}r\sin\nu.
\end{align*}
The polar angle associated to $M(\llangle r \rrangle, r\cos\nu, r\sin\nu)$ is
\begin{align*}
A(r,\nu) := \arctan\bigg(\frac{m_3(r,\nu)}{m_2(r,\nu)}\bigg).
\end{align*}
Thus, letting $\zeta =: (\llangle r \rrangle, r\cos\nu, r\sin\nu)$ and $\zeta' =: (\llangle r' \rrangle, r'\cos\nu', r'\sin\nu')$, we have
\begin{align}\label{angular separation}
    \dist_{\ang}(M(\zeta),M(\zeta')) \sim |A(r,\nu) - A(r',\nu')|.
\end{align}
Since $\zeta \in \kappa$, we know that $2^N \leq r \leq 2^{N+1}$ and $|\nu| \leq 2^{-k-1}$ by \eqref{def of kappa}.  Consequently, one easily checks that $m_2(r,\nu) \sim 2^N$ and $|m_3(r,\nu)| \lesssim 2^N$; the same bounds hold for $m_2(r',\nu')$ and $m_3(r',\nu')$.  Arguments using the mean value theorem and the preceding estimates show that
\begin{align*}
    |A(r,\nu) - A(r,\nu')| \geq |\nu - \nu'|\inf_{|\varphi| \leq 2^{-k-1}}|\partial_2 A(r,\varphi)| \gtrsim 2^k\dist_{\ang}(\zeta,\zeta')
\end{align*}
and
\begin{align*}
    |A(r,\nu')-A(r',\nu')| \leq |r-r'|\sup_{2^N \leq s \leq 2^{N+1}}|\partial_1 A(s,\nu')| \lesssim 2^N 2^{2k-3N} \lesssim 2^{-C}2^k\dist_{\ang}(\zeta,\zeta').
\end{align*}
Thus, if $C$ is sufficiently large, then $|A(r,\nu)-A(r',\nu')| \gtrsim 2^k\dist_{\ang}(\zeta,\zeta')$ by the triangle inequality.  Plugging this estimate into \eqref{angular separation} completes the proof.
\end{proof}

We are now ready to prove Proposition \ref{P:annuli via decoup}.  By the hypothesis $\scriptR_0^*((\frac{q_0}{2})' \rightarrow q_0)$, Proposition \ref{P:sectors via decoup}, and the Cauchy--Schwarz inequality, we have
\begin{align*}
    \|\fE f_1 \fE f_2\|_{q_0/2} \lesssim_\varepsilon 2^{-(N-k)(1-\frac{4}{q_0}-2\varepsilon)}\|f_1\|_{p_0}\|f_2\|_{p_0}
\end{align*}
for all functions $f_1$, $f_2$ supported in $(N,k)$-sectors.  Given $q_1 > \frac{10}{3}$, we also have
\begin{align*}
    \|\fE f_1 \fE f_2\|_{q_1/2} \lesssim 2^{-(N-k)(\frac{6}{q_1}-1)}\|f_1\|_2\|f_2\|_2
\end{align*}
by Theorem \ref{T:bilin cone}, provided $f_1$ and $f_2$ are supported in related $(N,k)$-sectors.  Interpolating these estimates, we see that
\begin{align}\label{interpolated est}
    \|\fE f_1 \fE f_2\|_{q_t/2} \lesssim_\varepsilon 2^{-(N-k)\alpha_t}\|f_1\|_{s_t}\|f_2\|_{s_t},
\end{align}
for $0 \leq t \leq 1$, where
\begin{align*}
    \left(\frac{1}{s_t},\frac{1}{q_t}\right) &:= (1-t)\left(1-\frac{2}{q_0}, \frac{1}{q_0}\right) + t\left(\frac{1}{2},\frac{1}{q_1}\right),\\
    \alpha_t &:= (1-t)\left(1-\frac{4}{q_0}-2\varepsilon\right) + t\left(\frac{6}{q_1}-1\right).
\end{align*}
We may apply Lemma~\ref{bilinear-to-linear} to obtain uniform restricted weak type $L^p \to L^{q_t}$ bounds on dyadic annuli as long as $(\frac{q_t}{2})' \leq p \leq q_t$ and
$$
\frac1p > \frac1{s_t}-\frac{\alpha_t}2, 
$$
or, equivalently, after a bit of arithmetic, if
\begin{equation} \label{E:temp conditional p}
\frac1p > \left(\frac3{q_1}-\frac12+\eps\right)(1-t)+1-\frac3{q_1}.
\end{equation}
Sending $q_1 \searrow \tfrac{10}3$ and $\eps \searrow 0$, and substituting $1-t = \tfrac{1/{q_t}-1/q_1}{1/q_0-1/q_1}$ in \eqref{E:temp conditional p} yields \eqref{E:conditional p}.  Having proved restricted weak type bounds in the claimed region, real interpolation completes the proof of Proposition \ref{P:annuli via decoup}.

\section{Summing the bounds on annuli} \label{S:annuli to global}

The purpose of this section is complete the proof of Theorem~\ref{T:pos} by proving that uniform bounds for the extension from dyadic annuli imply global bounds on $\scriptE$.  Let $\scriptR^*_{ann}(p \to q)$ denote the statement that for all $N \geq 1$ and measurable $f_N$ supported on $\Gamma_N$,
$$
\|\scriptE f_N\|_q \lesssim \|f_N\|_p.
$$
We will spend the majority of this section proving the following.

\begin{lemma} \label{L:annular Strichartz}
If $\scriptR^*_{ann}(p_0 \to q_0)$ holds for some $(\tfrac{q_0}2)' \leq p_0 \leq q_0$, then $\scriptR^*(p \to q)$ holds for all $q > q_0$ and $p' = \tfrac{p_0'}{q_0} q$. 
\end{lemma}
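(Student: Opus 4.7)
The plan is to prove Lemma \ref{L:annular Strichartz} by a duality argument combined with a Littlewood--Paley decomposition on the dual side. The key structural observation is that if $g\in L^{q'}(\R^3)$, then the restriction $\hat g|_{\Gamma_N}$ depends only on the dyadic frequency-localized piece $P_N g$, because $\Gamma_N \subset \{|\xi|\sim 2^N\}$. This allows a uniform annular restriction bound to be applied to each $P_N g$ separately, and the $\ell^{p'}$ sum over $N$ to be collapsed back to $\|g\|_{q'}$ via $\ell^r$-monotonicity and the Littlewood--Paley square-function estimate.

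First I would Riesz--Thorin-interpolate the hypothesis $\scriptR^*_{ann}(p_0\to q_0)$ with the trivial uniform bound $\|\scriptE h\|_\infty \leq \|h\|_{L^1(d\sigma)}$ on each annulus, obtaining uniform annular estimates $\|\scriptE f_N\|_q \lesssim \|f_N\|_p$ for all $(p,q)$ on the Riesz--Thorin segment joining $(p_0,q_0)$ to $(1,\infty)$. A direct computation of the interpolation exponents identifies this segment with the line $p' = p_0' q/q_0$ for $q \geq q_0$; in particular $p \leq p_0 \leq q_0 \leq q$ throughout. Dualizing this uniform annular bound yields the companion estimate
\[
\|\hat g|_{\Gamma_N}\|_{L^{p'}(d\sigma)} \lesssim \|g\|_{L^{q'}(\R^3)}.
\]

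Next I would introduce a standard smooth Littlewood--Paley decomposition $g = \sum_N P_N g$, with $\widehat{P_N g}$ supported in $\{|\xi|\sim 2^N\}$ and equal to $\hat g$ there. Since $\Gamma_N \subset \{|\xi|\sim 2^N\}$, one has $\hat g|_{\Gamma_N} = \widehat{P_N g}|_{\Gamma_N}$, so applying the dual annular bound to $P_N g$ gives $\|\hat g|_{\Gamma_N}\|_{p'} \lesssim \|P_N g\|_{q'}$. Using that the $\Gamma_N$ essentially partition $\Gamma$, I would then raise to the $p'$-th power and sum over $N$:
\[
\|\hat g|_\Gamma\|_{p'}^{p'} = \sum_N \|\hat g|_{\Gamma_N}\|_{p'}^{p'} \lesssim \sum_N \|P_N g\|_{q'}^{p'}.
\]
Two facts collapse the right-hand side to $\|g\|_{q'}^{p'}$: first, the $\ell^r$-monotonicity embedding $\ell^{q'} \hookrightarrow \ell^{p'}$ (valid since $p' \geq q'$, i.e.\ $p \leq q$); second, the Littlewood--Paley square-function bound $\bigl(\sum_N \|P_N g\|_{q'}^{q'}\bigr)^{1/q'} \lesssim \|g\|_{q'}$, valid since $q' \leq 2$. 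Dualizing once more yields $\scriptR^*(p \to q)$.

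The main obstacle is conceptual rather than computational: one must verify that the two exponent inequalities used in the final collapsing step align exactly with the hypotheses. Specifically, the assumption $p_0 \leq q_0$ passes through Riesz--Thorin to produce $p \leq q$, i.e.\ $p' \geq q'$, which is precisely what makes the $\ell^r$-embedding step work; and the assumption $q > q_0 > \tfrac{10}{3}$ forces $q' < 2$, which is precisely what makes the square-function estimate collapse $\sum_N \|P_N g\|_{q'}^{q'}$ back to $\|g\|_{q'}^{q'}$. With these two alignments, the argument is otherwise a routine chain of interpolation, duality, and dyadic-frequency manipulations; the only technical point requiring care is the standard fact that the Littlewood--Paley multiplier can be chosen identically $1$ on the thickness of the frequency annulus containing $\Gamma_N$.
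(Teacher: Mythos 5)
Your route---dualize the uniform annular bound, Littlewood--Paley decompose on the dual side, and collapse the resulting sum over $N$ by a vector-valued square-function estimate---is genuinely different from the paper's, which establishes a bilinear estimate between distinct annuli with exponential off-diagonal decay (inequality \eqref{E:bilin annuli}) via the mixed-norm Strichartz inequality \eqref{mixednormStr} and interpolation, and then sums by the quadrilinear expansion preceding the proofs of Lemmas~\ref{L:annular Strichartz} and \ref{L:reduce annuli p=q}. Unfortunately your argument has a gap at the final collapsing step, and the gap is decisive.

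The claimed bound $\bigl(\sum_N \|P_N g\|_{q'}^{q'}\bigr)^{1/q'} \lesssim \|g\|_{q'}$, ``valid since $q'\le 2$,'' in fact fails for every $q'<2$; the Littlewood--Paley theorem gives the \emph{reverse} inequality in this range. For $r\le 2$ one has the pointwise bound $(\sum_N|a_N|^2)^{r/2}\le \sum_N|a_N|^r$, so
\begin{equation*}
\|g\|_r^r \lesssim \int\Bigl(\sum_N|P_N g|^2\Bigr)^{r/2}\le \sum_N\|P_N g\|_r^r.
\end{equation*}
A concrete counterexample to your inequality is $g=\sum_{N=1}^M e^{i2^N e_1\cdot x}\phi(x)$ for a fixed Schwartz bump $\phi$: then $P_N g=e^{i2^N e_1\cdot x}\phi$ for $1\le N\le M$, so $\sum_N\|P_N g\|_{q'}^{q'}\sim M$, whereas Zygmund's lacunary estimate gives $\|g\|_{q'}^{q'}\sim M^{q'/2}$, a discrepancy of size $M^{1-q'/2}\to\infty$. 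What \emph{is} true for $q'\le 2$ is $(\sum_N\|P_N g\|_{q'}^2)^{1/2}\lesssim\|g\|_{q'}$, by Minkowski's inequality followed by the square-function estimate; to close your argument from this $\ell^2$-summability one then needs $\ell^2\hookrightarrow\ell^{p'}$, i.e.\ $p'\ge 2$. But $p'=p_0'q/q_0$ and $p_0'\le q_0/2$, so $p'\ge 2$ forces $q\ge 2q_0/p_0'\ge 4$, and the entire content of the lemma in the range $q_0<q<4$ is lost. The paper's bilinear route replaces $\ell^2$-summability with the quantitative off-diagonal smallness $2^{-c_0|N_1-N_2|}$ between annuli, which is precisely the stronger form of orthogonality that survives into the regime $p'<2$.
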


\begin{lemma} \label{L:reduce annuli p=q}
If $\scriptR^*_{ann}(q_0 \to q_0)$ holds for some $3<q_0<4$, then $\scriptR^*(q \to q)$ holds for all $q_0 < q < 4$.  
\end{lemma}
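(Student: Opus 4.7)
My plan is to combine the uniform annular hypothesis with the mixed-norm Strichartz estimate \eqref{mixednormStr} so as to upgrade the (merely uniform) hypothesis to an annular $L^q\to L^q$ bound that carries strict geometric decay in the dyadic frequency scale $N$, and then sum over $N$ using the triangle and Hölder inequalities.

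For the first step, observe that \eqref{mixednormStr} applied with an admissible pair $(r,s)$ satisfying $r>s$ gives, for any $f_N$ supported on $\Gamma_N$, a bound
\[
\|\scriptE f_N\|_{L^r_tL^s_x(\R^{1+2})} \lesssim 2^{N(1/r-1/s)}\|f_N\|_{L^2(\Gamma;\d\sigma)},
\]
whose prefactor exhibits genuine decay in $N$ since $1/r-1/s<0$. Combining this mixed-norm gain with Hölder's inequality on $\Gamma_N$ (which has $\sigma(\Gamma_N)\sim 2^N$), interpolating against the hypothesis $\scriptR^*_{ann}(q_0\to q_0)$, and converting the mixed-norm bound back to the diagonal $L^q(\R^3)$ norm via localization in time and routine reductions, one should obtain a uniform annular estimate
\[
\|\scriptE f_N\|_{L^q(\R^3)} \lesssim 2^{-N\alpha}\|f_N\|_{L^q(\Gamma;\d\sigma)}
\]
for some $\alpha=\alpha(q,q_0)>0$ whenever $q_0<q<4$. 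The strict positivity of $\alpha$ relies essentially on $q>q_0$: at the endpoint $q=q_0$ the hypothesis itself is the only bound available (and yields $\alpha=0$), while any excess above $q_0$ can be traded against the Strichartz factor in the interpolation to produce positive decay.

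For the second step, decompose $f=\sum_{N\geq 0}f_N$ with the $f_N$ supported on disjoint $\Gamma_N$, so that $\|f\|_q^q=\sum_N\|f_N\|_q^q$. Triangle inequality in $L^q(\R^3)$ followed by Hölder's inequality in the discrete index $N$ with exponents $(q',q)$ yields
\[
\|\scriptE f\|_q \leq \sum_{N\geq 0}\|\scriptE f_N\|_q \lesssim \sum_{N\geq 0}2^{-N\alpha}\|f_N\|_q \leq \Bigl(\sum_{N\geq 0}2^{-N\alpha q'}\Bigr)^{1/q'}\|f\|_q \lesssim \|f\|_q,
\]
where the convergence of the geometric series uses $\alpha>0$.

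The main obstacle is the annular decay in the first step: the Strichartz estimate produces gain naturally in the mixed-norm $L^r_t L^s_x$, and converting back to a diagonal $L^q_{t,x}$ estimate without destroying that gain is delicate. The interpolation against the hypothesis must be done carefully so that some portion of the mixed-norm decay survives the conversion, which is precisely why the upper bound $q<4$ appears: it keeps $q$ strictly below the diagonal Strichartz endpoint, where the mixed-norm decay one can extract still overcomes the loss incurred in passing to the diagonal.
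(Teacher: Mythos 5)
Your first step asserts a uniform decay estimate of the form
\[
\|\scriptE f_N\|_{L^q(\R^3)} \lesssim 2^{-N\alpha}\|f_N\|_{L^q(\Gamma;\d\sigma)}, \qquad \alpha>0,
\]
and this is false, for any fixed pair $(p,q)$, by Lorentz invariance. The Lorentz boost $L_\nu$ preserves the measure $\d\sigma$ on $\Gamma$, and its adjoint on $\R^3$ is a volume-preserving linear map, so the $L^p(\Gamma;\d\sigma)\to L^q(\R^3)$ operator norm of $f\mapsto\scriptE(\one_{L_\nu(\Gamma_0)}f)$ equals that of $\scriptE_0$, independently of $\nu$. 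Taking $\nu=2^N\omega$ pushes a bump on $\Gamma_0$ out to a thin sector of $\Gamma_N$ (this is exactly the mechanism in Lemma~\ref{L:zero to thin sector}), and one sees that the ratio $\|\scriptE f_N\|_q/\|f_N\|_q$ is bounded \emph{below} by a positive constant uniformly in $N$. No amount of massaging of the mixed-norm Strichartz estimate can circumvent this: the weight $\hjp{\xi}^{1/r-1/s}$ only decays when $r>s$, and any interpolation or reconversion that lands you back on the diagonal $L^q_{t,x}$ must wash the decay out, precisely because the target statement it would produce is false.

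The paper instead proves a \emph{bilinear} inequality between annuli with decay in the \emph{separation} of dyadic scales,
\[
\|\scriptE f_{N_1}\scriptE f_{N_2}\|_{q/2} \lesssim 2^{-c_0|N_1-N_2|}\|f_{N_1}\|_q\|f_{N_2}\|_q,
\]
which is compatible with Lorentz invariance because it degenerates to the trivial Cauchy--Schwarz consequence of the hypothesis when $N_1=N_2$. Establishing this requires a Whitney decomposition of $\Gamma_{N_1}\times\Gamma_{N_2}$ into pairs of angularly related sectors and three distinct bilinear ingredients: Cauchy--Schwarz against the annular hypothesis for nearby sectors, Candy's multi-scale bilinear estimate~\eqref{E:Candy scales p=q} for transversal sectors (which carries the $|N_1-N_2|$ gain), and a direct $L^4$-type bilinear estimate~\eqref{E:L4 bilin} at the finest angular scale. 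The gain in the separation is then used to sum the four-fold products $\prod_{i=1}^4\scriptE f_{N_i}$ exactly as in the discussion following~\eqref{E:bilin annuli}, which is the correct analogue of your second step; here the restriction $q<4$ enters to make the $q/4$-th power sums sub-additive, not to control any mixed-norm loss. Your summation scheme is salvageable once the bilinear estimate replaces the (false) linear decay, but the linear decay itself is the load-bearing gap.
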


Before proving the lemmas in detail, we note that applying them in conjunction with Propositions~\ref{P:annuli bilin range} and~\ref{P:annuli via decoup} completes the proof of Theorem~\ref{T:pos}.  

We will prove Lemmas~\ref{L:annular Strichartz} and~\ref{L:reduce annuli p=q} by proving that the hypotheses imply a bilinear extension estimate between annuli:  \begin{equation} \label{E:bilin annuli}
\|\scriptE f_{N_1} \scriptE f_{N_2}\|_{\frac q2} \lesssim 2^{-c_0|N_1-N_2|}\|f_{N_1}\|_p\|f_{N_2}\|_p,
\end{equation}
for some $c_0>0$, and measurable functions $|f_{N_j}| \sim \one_{\Omega_{N_j}}$, $\Omega_{N_j} \subseteq \Gamma_{N_j}$, $j=1,2$.  Indeed, assuming validity of such an estimate, for any $|f| \sim \one_{\Omega}$, by the triangle inequality and $q \leq 4$,
\begin{align*}
\|\scriptE f\|_q^q 
&\lesssim \|f\|_p^q + \|\sum_{N \geq C}\scriptE f_N\|_q^q \lesssim \|f\|_p^q + \sum_{N_1\geq N_2 \geq N_3 \geq N_4 \geq C} \|\prod_{i=1}^4 \scriptE f_{N_i}\|_{\frac q4}^{\frac q4}\\
&\lesssim \|f\|_p^q + \sum_{N_1\geq N_2 \geq N_3 \geq N_4 \geq 1} 2^{-\frac{qc_0}4|N_1-N_4|}\prod_{i=1}^4\|f\|_{L^p(\Gamma_{N_i})}^{\frac q4}
\lesssim \sum_{N \geq 0}\|f\|_{L^p(\Gamma_N)}^q \lesssim \|f\|_{L^p}^q.
\end{align*}
Real interpolation leads to strong-type bounds. 

\begin{proof}[Proof of Lemma~\ref{L:annular Strichartz}]
The Strichartz inequality \eqref{mixednormStr} implies that
\begin{equation}\label{E:r,s strichartz}
\begin{gathered}
\|\scriptE f\|_{L^r_tL^s_x} \lesssim \|\hjp{\xi}^{\frac1r-\frac1s}f\|_{L^2(\Gamma;\d\sigma)}, \\ 
2 \leq r,s; \quad s < \infty;\quad \frac 2r + \frac{2p_0'}{(q_0-p_0')s} = \frac{p_0'}{q_0-p_0'}.
\end{gathered}
\end{equation}
As \eqref{E:r,s strichartz} implies boundedness of $\scriptE$ in the range $p=2$, $4 \leq q \leq 6$, we may assume henceforth that $p_0>2$.

Let $q_2 = 2\tfrac{q_0}{p_0'}$, and choose some $r_0, s_0,r_1,s_1$ obeying \eqref{E:r,s strichartz}, $r_0 < q_2 < s_0$, and 
$$
\frac1{q_2} = \frac12(\frac1{r_0}+\frac1{r_1}) = \frac12(\frac1{s_0}+\frac1{s_1}).
$$
By the Cauchy--Schwarz inequality, for any $1 \leq N_1 \leq N_2$, we have the bilinear estimate
\begin{align*}
    \|\scriptE f_{N_1}\scriptE f_{N_2}\|_{L^{\frac{q_2}2}} &\lesssim \|\scriptE f_{N_1}\|_{L^{r_0}_tL^{s_0}_x} \|\scriptE f_{N_2}\|_{L^{r_1}_tL^{s_1}_x} \lesssim 2^{N_1(\tfrac1{r_0}-\tfrac1{s_0})}2^{N_2(\tfrac1{r_1}-\tfrac1{s_1})}\|f_{N_1}\|_2\|f_{N_2}\|_2 \\
    &= 2^{-(\tfrac1{r_0}-\tfrac1{s_0})|N_1-N_2|}\|f_{N_1}\|_2\|f_{N_2}\|_2.
\end{align*}
Inequality \eqref{E:bilin annuli} follows by interpolation with the consequence 
$$
\|\scriptE f_{N_1} \scriptE f_{N_2}\|_{\frac{q_0}2} \lesssim \|f_{N_1}\|_{p_0}\|f_{N_2}\|_{p_0}
$$
of our hypothesis.
\end{proof}

\begin{proof}[Proof of Lemma~\ref{L:reduce annuli p=q}]
We will prove a bilinear estimate between annuli as in \eqref{E:bilin annuli}, with $p=q$ and $C \leq N_1 \leq N_2-C$ fixed.  To do so, we will use three different bilinear extension estimates between sectors at different scales.  

It is convenient to modify our Whitney decomposition slightly from earlier, though we will continue to use the convention that $|\tfrac{\xi}{|\xi|}-e_1| < c$ for all $\xi \in \Gamma_N$, for some sufficiently small $c$.  For $C \leq k \leq N_1$, let $\scriptD_k$ denote a $2^{-k}$-separated subset of $\mathbb{S}^1$.  For $\omega,\omega' \in \scriptD_k$ and $k < N_1-C$, we say that $\omega \sim \omega'$ if $2^{-k+C} \leq |\omega-\omega'| \leq 2^{-k+2C}$. Meanwhile, for $N_1-C \leq k \leq N_1$, we say that $\omega \sim \omega'$ if $|\omega - \omega'| \leq 2^{-N_1+2C}$. Thus for $\xi_1 \in \Gamma_{N_1}$ and $\xi_2 \in \Gamma_{N_2}$, there is at least one and at most a bounded number of triples $(k,\omega, \omega')$ with $\omega \sim \omega' \in \scriptD_k$ and $\xi_1 \in \Gamma_{N_1,k}^\omega$ and $\xi_2 \in \Gamma_{N_2,k}^{\omega'}$.  

By the hypothesis that $\scriptR^*_{ann}(q_0 \to q_0)$ holds, interpolation, and the Cauchy--Schwarz inequality, for $k \geq C$ and $\omega,\omega' \in \scriptD_k$, we have
\begin{equation} \label{E:CS p=q}
\|\scriptE f_{N_1,N_1}^\omega \,\scriptE f_{N_2,N_2}^{\omega'}\|_{\frac{q}2} \lesssim \|f_{N_1,N_1}^\omega\|_{q}\|f_{N_2,N_2}^{\omega'}\|_{q}, \qquad q_0 \leq q < 4.
\end{equation}
By Theorem~1.4 of \cite{Candy} (see also \cite[Theorem~1.10]{Candy}), if $0 \leq k < N_1-C$, $\omega \sim \omega' \in \scriptD_k$, and $\tfrac{10}3 < q_1 < 4$, then 
\begin{equation} \label{E:Candy scales p=q}
\|\scriptE f_{N_1,k}^\omega\,\scriptE f_{N_2,k}^{\omega'}\|_{\frac{q_1}2} \lesssim 2^{-(N_1+N_2-2k)(\frac3{q_1}-\frac12)}2^{-(\frac 32-\frac 5{q_1})|N_1-N_2|}\|f_{N_1,k}^\omega\|_2\|f_{N_2,k}^{\omega'}\|_2.  
\end{equation}
Finally, if $k \geq N_1-C$ and $\omega \sim \omega' \in \scriptD_k$, we claim that
\begin{equation} \label{E:L4 bilin}
\|\scriptE f_{N_1,k}^\omega \, \scriptE f_{N_2,k}^{\omega'}\|_2 \lesssim 2^{-\frac14|N_1-N_2|}\|f_{N_1,k}^\omega\|_4\|f_{N_2,k}^{\omega'}\|_4.
\end{equation}

We now turn to the details of \eqref{E:L4 bilin}, which follow a well-established route.  Let $(\hjp\xi,\xi) \in \Gamma_{N_1,k}^\omega$ and $(\hjp\eta,\eta) \in \Gamma_{N_2,k}^{\omega'}$.  The coordinate change $\zeta = (\hjp\xi+\hjp\eta,\xi+\eta)$, $\beta = \xi^\perp$ (perpendicular direction taken with respect to $\omega$) is finite-to-one, and has Jacobian determinant $|\tfrac{\partial(\zeta,\beta)}{\partial(\xi,\eta)}| \sim 2^{-2N_1}$.  By Plancherel's identity, the change of variables formula (recall that we integrate with respect to $\d\sigma$), and H\"older's inequality ($\beta$ varies over an interval of length at most 1), the right-hand side of \eqref{E:L4 bilin} is bounded by
\begin{align*}
\|\widehat{\scriptE f_{N_1,k}^\omega}\,\widehat{\scriptE f_{N_2,k}^{\omega'}}\|_2
\lesssim \bigl(\iint |f_{N_1,k}^\omega(\hjp\xi,\xi)\,f_{N_2,k}^{\omega'}(\hjp\eta,\eta)\tfrac1{\hjp\xi\hjp\eta} |\tfrac{\partial(\xi,\eta)}{\partial(\zeta,\beta)}||^2\,\d\beta\,\d\zeta\bigr)^{\frac12}.  
\end{align*}
Changing variables back, estimating the various roughly constant terms that have arisen, and using H\"older's inequality again, the right-hand side of the preceding inequality is bounded by
\begin{equation}\label{E:L4 L2 p=q}
2^{N_1} 2^{-\frac12(N_1+N_2)}\sigma(\Gamma_{N_1,k}^\omega)^{\frac14}\sigma(\Gamma_{N_2,k}^{\omega'})^{\frac14}\|f_{N_1,k}^\omega\|_4\|f_{N_2,k}^{\omega'}\|_4.
\end{equation}
Since $\sigma(\Gamma_{N_1,k}^\omega) \lesssim 1$, while $\sigma(\Gamma_{N_2,k}^{\omega'}) \lesssim 2^{N_2-N_1}$, inequality \eqref{E:L4 L2 p=q} implies \eqref{E:L4 bilin}.  

Interpolating \eqref{E:CS p=q} and \eqref{E:Candy scales p=q}, yields, for all $\omega \sim\omega' \in \scriptD_k$, $0 \leq k < N_1-C$,
\begin{equation} \label{E:bilin scales sep p=q}
\|\scriptE f_{N_1,k}^\omega \scriptE f_{N_2,k}^{\omega'}\|_{\frac q2} \lesssim 2^{-(N_1+N_2-2k)\frac\alpha2}2^{-\delta|N_1-N_2|}\|f_{N_1,k}^\omega\|_s\|f_{N_2,k}^{\omega'}\|_s,
\end{equation}
for all $q_0<q<4$, some $s<q$, some $\alpha > \tfrac 2s-\tfrac2q$, and some $\delta > 0$.  
Interpolating \eqref{E:L4 bilin} and \eqref{E:CS p=q} (taking $q=q_0$ in the latter) yields, for $k = N_1+C$ and  $\omega \sim \omega' \in \scriptD_{N_1,k}$,
\begin{equation} \label{E:bilin scales p=q}
\|\scriptE f_{N_1,k}^\omega \scriptE f_{N_2,N_1}^{\omega'}\|_{\frac q2} \lesssim 2^{-\delta|N_1-N_2|}\|f_{N_1,N_1}^\omega\|_q\|f_{N_2,N_1}^{\omega'}\|_q, \qquad q_0 < q < 4.
\end{equation}

We adapt the bilinear to linear argument of Tao--Vargas--Vega \cite{TVV}. Namely, if $|f_{N_j}| \sim \one_{\Omega_{N_j}},$ $j=1,2$, then using a partition of unity and almost orthogonality; using \eqref{E:bilin scales sep p=q},  \eqref{E:bilin scales p=q}, the Cauchy--Schwarz inequality, and reindexing; and  finally summing as in the proof of Lemma~\ref{bilinear-to-linear},
\begin{align*}
    &\|\scriptE f_{N_1}\scriptE f_{N_2}\|_{\frac q2} 
    \lesssim \sum_{k=C}^{N_1} \bigl(\sum_{\omega \sim \omega' \in \scriptD_k}\|\scriptE f_{N_1,k}^\omega \scriptE f_{N_2,k}^{\omega'}\|_{\frac q2}^{\frac q2}\bigr)^{\frac 2q} \\
    &\qquad\lesssim 2^{-\delta|N_1-N_2|}\left(  \prod_{j=1}^2\bigl(\sum_{\omega \in \scriptD_{N_1}} \|f_{N_j,k}^\omega\|_q^q\bigr)^{\frac 1q}
    + \prod_{j=1}^2\bigl(\sum_{k=C}^{N_1} 2^{-(N_j-k)\alpha}\bigl(\sum_{\omega \in \scriptD_k}\|f_{N_j,k}^\omega\|_s^q\bigr)^{\frac1q}\right)\\
    &\qquad \lesssim 2^{-\delta|N_1-N_2|}\|f_{N_1}\|_q\|f_{N_2}\|_q.
\end{align*}

\end{proof}

\section*{Acknowledgements} This work was supported by NSF DMS-1653264 and the EPSRC New Investigator Award ``Sharp Fourier Restriction Theory'', grant no.\@ EP/T001364/1.

\end{document}